\DeclareSymbolFont{cyrletters}{OT2}{wncyr}{m}{n}
\DeclareMathSymbol{\Sha}{\mathalpha}{cyrletters}{"58}
\DeclareMathOperator{\Hom}{Hom}
\DeclareMathOperator{\Gal}{Gal}
\DeclareMathOperator{\alg}{(alg)}
\DeclareMathOperator{\cyc}{cyc}
\DeclareMathOperator{\ord}{ord}
\DeclareMathOperator{\Tr}{Tr}
\DeclareMathOperator{\charac}{char}
\DeclareMathOperator{\End}{End}
\DeclareMathOperator{\Aut}{Aut}
\newcommand{\QQ}{\mathbb Q}
\newcommand{\cD}{\mathcal D}
\newcommand{\FF}{\mathbb F}
\newcommand{\GG}{\mathbb G}
\newcommand{\ZZ}{\mathbb Z}
\newcommand{\CC}{\mathbb C}
\newcommand{\cF}{\mathcal F}
\newcommand{\cO}{\mathcal O}
\newcommand{\fa}{\mathfrak a}
\newcommand{\ff}{\mathfrak f}
\newcommand{\Djk}{\mathscr{D}_{j,k}}
\newcommand{\Dnotk}{\mathscr{D}_{0,k}}
\newcommand{\fb}{\mathfrak b}
\newcommand{\fm}{\mathfrak m}
\newcommand{\fc}{\mathfrak c}
\newcommand{\fp}{\mathfrak p}
\newcommand{\fq}{\mathfrak q}
\newcommand{\fg}{\mathfrak g}
\newcommand{\fh}{\mathfrak h}
\newcommand{\fl}{\mathfrak{l}}
\newcommand{\cL}{\mathcal{L}}
\newcommand{\sR}{\mathscr{R}}
\newcommand{\Zq}{\mathbb{Z}_q}
\newcommand{\fO}{\mathfrak{O}}
\newcommand{\rational}{\vartheta_{\fa,V}^\Psi}
\numberwithin{equation}{section}
\newtheorem*{Theorem*}{Theorem}
\newtheorem{Th}{Theorem}[section]
\newtheorem{Lemma}[Th]{Lemma}
\newtheorem*{Ques*}{Question}
\newtheorem{Prop}[Th]{Proposition}
\newtheorem*{conj*}{Conjecture}
\newtheorem{lthm}{Theorem}
\definecolor{Green}{rgb}{0.0, 0.5, 0.0}
\newtheorem{rem}[Th]{Remark}
\newtheorem{Defi}[Th]{Definition}
\begin{document}
\title[Non-vanishing mod $p$ of Hecke $L$-values]{Non-vanishing modulo $p$ of Hecke $L$-values over imaginary quadratic fields}

\author[D.~Kundu]{Debanjana Kundu}
\address[Kundu]{Fields Institute \\ University of Toronto \\ Toronto ON, M5T 3J1, Canada}
\email{dkundu@math.toronto.edu}

\author[A.~Lei]{Antonio Lei}
\address[Lei]{Department of Mathematics and Statistics\\University of Ottawa\\
150 Louis-Pasteur Pvt\\
Ottawa, ON\\
Canada K1N 6N5}
\email{antonio.lei@uottawa.ca}

\date{\today}

\keywords{Hecke characters, imaginary quadratic fields, $p$-divisibility of Hecke $L$-values}
\subjclass[2020]{Primary 11S40, 11G15; Secondary 11F67, 11R20, 11R23}

\begin{abstract}
Let $p$ and $q$ be two distinct odd primes.
Let $K$ be an imaginary quadratic field over which $p$ and $q$ are both split.
Let $\Psi$ be a Hecke character over $K$ of infinity type $(k,j)$ with $0\le-j< k$.
Under certain technical hypotheses, we show that for a Zariski dense set of finite-order characters $\kappa$ over $K$ which factor through the $\Zq^2$-extension of $K$, the $p$-adic valuation of the algebraic part of the $L$-value $L(\overline{\kappa\Psi},k+j)$ is a constant independent of $\kappa$.
In addition, when $j=0$ and certain technical hypothesis holds, this constant is zero.
\end{abstract}

\maketitle

%------------------------------------------
\section{Introduction}
%------------------------------------------
Let $p$ and $q$ be two distinct odd primes.
It is a classical problem to study the divisibility of the algebraic part of (Hecke) $L$-values by a given prime $p$ as one varies the (Hecke) characters of $q$-power conductor.
For Dirichlet $L$-values, such questions were studied by L.~Washington in \cite{Was75, Was78}.
He showed that for almost all Dirichlet characters of $q$-power conductor, the algebraic parts of their $L$-values are coprime to $p$.
As an application, he proved that the $p$-part of the class number stabilizes in cyclotomic $\ZZ_q$-extensions of abelian number fields.
Washington's results have been extended to the case of (finite) product of cyclotomic $\mathbb{Z}_{q_i}$-extensions of abelian number fields (for distinct primes $q_i$ with $q_i\ne p$) by E.~Friedman in \cite{friedman}.

In \cite{sinnott}, W.~Sinnott introduced the idea of relating non-vanishing of such $L$-values modulo $p$ to Zariski density (modulo $p$) of special points of the algebraic variety underlying the $L$-values.
Using this machinery, J.~Lamplugh generalized Washington's theorem to \emph{split prime} $\ZZ_q$-extensions of imaginary quadratic fields in \cite{Lam15}.
Let $K$ be an imaginary quadratic field such that $q \cO_K= \fq{\fq^*}$ with $\fq \neq \fq^*$, then the \emph{split prime} $\Zq$-extension of $K$ is one where only one of $\fq$ or ${\fq^*}$ is ramified.

%Hida proved his results under the conditions that p is split in K and the tame conductor of characters is a product of split primes (which excludes the self-dual characters) M.-L. Hsieh removed Hida's assumption on the conductor but still assume p is split in K in [Hsi12] and [Hsi14, Remark 6.4]. (Hsieh assumed the inert part of the conductor is square-free in [Hsi12], which was removed in [Hsi14, Remark 6.4]). Burungale follows the same assumptions. T. Finis only treated self-dual characters but he allowed p to be inert or ramified in K.

In \cite{hida1,hida2}, H.~Hida studied analogous questions for anticyclotomic characters.
He proved that when $p$ is split in $K$ and the tame conductor of characters is a product of split primes (which excludes the self-dual characters), the algebraic parts of the $L$-values of "almost all" anticyclotomic characters of $q$-power conductor over a CM field are non-zero mod $p$.
Here, "almost all" means "Zariski dense" after identifying the characters with a product of the multiplicative group (see Remark~\ref{rk:Zar}).
This has been generalized by M.-L.~Hsieh to include self-dual characters assuming that $p$ is split in $K$ in \cite{hsieh}  and that the inert part of the conductor is square-free.
The hypothesis on the inert part of the conductor was removed in \cite[Remark~6.4]{hsieh2014mu}.
In the case where the CM field is an imaginary quadratic field, T.~Finis has proved similar results for self-dual characters allowing $p$ to be either inert or ramified in $K$, and has determined precisely the $p$-adic valuations of the algebraic parts of anticyclotomic Hecke characters of $q$-power conductor; see \cite{finis}.
More recently, A.~Burungale showed in \cite{burungale} that these results may be extended to Hida families of anticyclotomic characters under the same hypotheses as those in the works of Hsieh.

We study a generalization of the aforementioned results on anticyclotomic characters to Hecke characters (not necessarily anticyclotomic) of $q$-power conductor over an imaginary quadratic field.
It can be regarded as a 2-variable version of \cite[Theorem~6.9]{Lam15}.

\begin{lthm}\label{thmA}
Let $K$ be an imaginary quadratic field over which $p$ and $q$ are both split.
Suppose that both the prime ideals above $q$ are principal in $K$.
Let $\Psi$ be a Hecke character over $K$ of infinity type $(k,j)$ and conductor $\ff$, where $0\le -j<k$ and $\ff$ is coprime to $pq$.
Assume that $q\nmid [\sR(\ff):K]$, where $\sR(\ff)$ denotes the ray class field of $K$ of conductor $\ff$.
Let $\cF_\infty$ be the $\Zq^2$-extension of $K$.
There exists a constant $C_\Psi$ such that for a Zariski dense set of finite-order characters $\kappa$ of $\Gal(K_\infty/K)$,
\[
\ord_{p}\left( L^{\alg} \left(\overline{\kappa\Psi}\right)\right)=C_\Psi.
\]
\end{lthm}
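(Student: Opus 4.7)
The plan is to follow the template established by Sinnott and developed in the anticyclotomic setting by Hida, Hsieh, Finis, and Burungale, and in the split prime one-variable setting by Lamplugh. First, I would construct an integral $p$-adic measure $\vartheta_\Psi \in \Lambda := \Zp[[\Gal(\cF_\infty/K)]]$ whose specialization at each finite-order character $\kappa$ of $\Gal(\cF_\infty/K)$ recovers $L^{\alg}(\overline{\kappa\Psi})$ up to a $p$-adic unit depending on $\kappa$. Second, after fixing topological generators of $\Gal(\cF_\infty/K) \cong \Zq^2$, I would identify $\Lambda \cong \Zp[[T_1,T_2]]$ and reduce $\vartheta_\Psi$ modulo $p$. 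Third, I would show that the mod-$p$ reduction $\bar\vartheta_\Psi \in \FF_p[[T_1,T_2]]$ is nonzero, from which it follows formally that $\ord_p(\kappa(\vartheta_\Psi))$ equals the $\mu$-invariant $\mu(\vartheta_\Psi)$ on a Zariski dense set of $\kappa$'s, giving $C_\Psi := \mu(\vartheta_\Psi)$.

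For the construction, the natural tool is Katz's two-variable $p$-adic $L$-function for $K$, specialized along $\cF_\infty$. Since $p\cO_K = \fp\fp^*$ and $\ff$ is coprime to $pq$, Katz's measure, suitably modified by Euler factors at the primes above $q$ because those primes ramify in $\cF_\infty/K$, produces an element of $\Lambda$. The principality of $\fq$ and $\fq^*$ ensures that the translates of CM points used in the construction remain in the correct Galois orbit, and the hypothesis $q \nmid [\sR(\ff):K]$ guarantees that torsion does not obstruct the projection from $\Gal(K(\ff p^\infty)/K)$ onto $\Gal(\cF_\infty/K)$. Because $0 \le -j < k$, the value $L(\overline{\kappa\Psi},k+j)$ lies in Katz's range of interpolation, so $\kappa(\vartheta_\Psi) \doteq L^{\alg}(\overline{\kappa\Psi})$ holds with an explicit CM-period unit factor involving $\Omega_p/\Omega_\infty$.

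The passage from nonvanishing mod $p$ to the theorem is then formal. Writing $\vartheta_\Psi = p^{\mu(\vartheta_\Psi)} \tilde\vartheta_\Psi$ with $\tilde\vartheta_\Psi$ not divisible by $p$, Weierstrass preparation shows that the zero locus of $\bar{\tilde\vartheta}_\Psi$ is a proper analytic subvariety of the rigid unit polydisc parametrizing $\Hom_{\text{cts}}(\Gal(\cF_\infty/K),\overline{\QQ}_p^{\times})$. Sinnott's Zariski density lemma then guarantees that the finite-order characters avoiding this subvariety form a Zariski dense subset; on this subset $\ord_p(\kappa(\vartheta_\Psi)) = \mu(\vartheta_\Psi)$, as desired.

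The main obstacle, which carries the real content of the theorem, is proving $\bar\vartheta_\Psi \ne 0$. Here I would adapt the Hida--Hsieh--Burungale strategy: realize $\vartheta_\Psi$ as a $q$-adic toroidal integral of a Katz--Eisenstein form twisted by $\Psi$ and evaluated at a CM point $x$ of the relevant Hilbert modular variety, and reduce the nonvanishing to the Zariski density of the $\Zq^2$-orbit of $x$ in the mod-$p$ fibre of that variety. The decisive inputs are that $q \ne p$, that $q$ splits in $K$, and that $\fq$, $\fq^*$ are principal: together these force the orbit under translation in the Serre--Tate formal torus at $p$ to escape every proper formal subtorus, whereupon a density theorem in the style of Chai--Oort yields the desired nonvanishing. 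Once this density statement is secured, the remaining steps are essentially formal, and the theorem follows with $C_\Psi = \mu(\vartheta_\Psi)$.
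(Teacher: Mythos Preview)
Your proposal rests on a conflation of the two primes in play. The tower $\cF_\infty$ is the $\Zq^2$-extension of $K$, so $\Gal(\cF_\infty/K)\cong\Zq^2$ with $q\ne p$; the completed group ring $\Zp[[\Zq^2]]$ is \emph{not} isomorphic to $\Zp[[T_1,T_2]]$, Weierstrass preparation is unavailable, and there is no $\mu$-invariant in the sense you invoke. Likewise, Katz's two-variable $p$-adic $L$-function is a measure on $\Gal(K(\ff p^\infty)/K)$ and interpolates twists by characters of $p$-power conductor; since $\ff$ is coprime to $q$, it carries no information about the characters $\kappa$ of $q$-power conductor that appear here, and there is no projection $\Gal(K(\ff p^\infty)/K)\to\Gal(\cF_\infty/K)$ to ``specialize along''. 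The Serre--Tate and Chai--Oort apparatus you cite concerns $p$-adic deformation of CM points and is not what governs $q$-power isogeny orbits in characteristic~$p$.

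The paper's route is the one Sinnott and Lamplugh use in this mixed $(p,q)$ situation: one works with $p$-adically valued measures on $\Zq^2$ whose Fourier transforms are given by a rational function $R$ on the CM elliptic curve $E$ evaluated at $q$-power torsion (``elliptic function measures''). An explicit $R$ built from logarithmic derivatives of theta-type functions is constructed so that the Gamma transform at $\kappa$ equals $L^{\alg}(\overline{\kappa\Psi})$ up to units. The statement that $\ord_\pi(\Gamma_\alpha(\kappa))=\ord_\pi(R)$ on a Zariski-dense set of $\kappa$ is then proved by reducing, via Hida's Zariski-density lemma for $\mu_{q^\infty}^2\subset\GG_{m/\overline\QQ_q}^2$, to an algebraic-independence statement for rational functions on $E$ over the residue field of $\pi$. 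This algebraic-independence step, not a $\mu$-invariant computation, is the substantive replacement for the argument you sketch, and it yields $C_\Psi=\ord_\pi(R)$.
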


Under additional hypotheses, we prove:
\begin{lthm}\label{thmB}
With notation as in the statement of Theorem~\ref{thmA}, if $j=0$ and the character of $\Gal(\sR(\ff)/K)$ induced by $\Psi$ satisfies a technical hypothesis \eqref{eq:condition-rho}, then $C_\Psi=0$.
\end{lthm}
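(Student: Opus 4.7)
The plan rests on Theorem~\ref{thmA}: since $\ord_p(L^{\alg}(\overline{\kappa\Psi}))$ is constant on a Zariski dense set of finite-order characters $\kappa$ of $\Gal(\cF_\infty/K)$, to show that this constant is zero it suffices to exhibit a single $\kappa_0$ in that dense set such that $L^{\alg}(\overline{\kappa_0\Psi})$ is a $p$-adic unit. So the strategy is to pick one convenient $\kappa_0$ where an explicit formula for the L-value is available, and then to read off non-vanishing modulo $p$ directly from the hypothesis \eqref{eq:condition-rho}.

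To identify such a $\kappa_0$, I would inspect the proof of Theorem~\ref{thmA} to extract the precise description of $C_\Psi$. Following the Sinnott--Lamplugh philosophy that governs the rest of the paper, the L-values on the dense set should factor, up to $p$-adic units, into a twist-independent piece --- expressible as a character sum $\sum_{\sigma \in \Gal(\sR(\ff)/K)} \rho(\sigma)\,\xi_\sigma$ in the induced character $\rho$ of $\Psi$ and certain algebraic quantities $\xi_\sigma$ coming from CM values of an Eisenstein or elliptic-unit construction attached to $\Psi$ --- and a twist-dependent piece that is automatically a unit. The assumption $q\nmid[\sR(\ff):K]$ guarantees that this decomposition behaves well when tensored with characters of $q$-power order, so the $p$-adic valuation of the whole L-value coincides with that of the twist-independent character sum.

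When $j=0$, the L-value lies at the boundary of the critical strip and admits a cleaner algebraic description via a Damerell- or Kronecker-type formula, which should reduce the twist-independent piece to a concrete, explicit character sum. The technical hypothesis \eqref{eq:condition-rho} is then tailored precisely so that this sum is a $p$-adic unit, yielding $C_\Psi = 0$. The main obstacle is the bookkeeping: identifying the abstract constant $C_\Psi$ produced by the Zariski-density argument with the $p$-adic valuation of the explicit character sum, and verifying that the particular $\kappa_0$ we implicitly evaluate at lies within the Zariski dense set rather than on the excluded locus. Once these identifications are made carefully, tracking normalizations of complex and $p$-adic periods, the conclusion follows at once.
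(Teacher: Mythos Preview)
Your instinct to unpack the proof of Theorem~\ref{thmA} is the right first move: tracing through Theorem~\ref{thm:valuation-gamma-transform} and Lemma~\ref{thm: Lam15 6.6} shows that $C_\Psi$ is determined by $\ord_\pi(\rational)$, the valuation (Definition~\ref{defn:ord}) of the rational function of Definition~\ref{defi: rational function main}, once the factor $(k-1)!$ is accounted for and the auxiliary factor $N(\fa)-\varphi(\fa)^k\upsilon(\tau_\fa)$ is disposed of via Lemma~\ref{lemma: factor has trivial pi-adic valuation}. But from there your plan misidentifies the mechanism. You posit that each $L$-value on the dense set factors as a twist-independent character sum times a twist-dependent unit, with the sum in \eqref{eq:condition-rho} playing the role of the first factor. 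There is no such multiplicative splitting of the $L$-values: the Gamma transform $\Gamma_{\alpha^*}(\kappa)$ is a genuine sum $\tau(\kappa)\sum_x\kappa^{-1}(x)\,\rational(\delta(\underline\zeta^x))$ over $q$-power torsion, and the equality $\ord_\pi\Gamma_{\alpha^*}(\kappa)=\ord_\pi(\rational)$ on a dense set is the \emph{conclusion} of Theorem~\ref{thm:valuation-gamma-transform}, not a consequence of any factorization. Your fallback idea of evaluating at a single $\kappa_0$ runs into the circularity you yourself flag: the dense set is characterized only as the locus where the valuation attains its generic value, so you cannot certify membership without already knowing $C_\Psi$.

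What the paper does instead is compute $\ord_\pi(\rational)$ \emph{geometrically}, as a rational function on $E$, never evaluating any individual $L$-value. Lemma~\ref{lemma: Lam15 Lemma 6.7} locates the poles of $\rational$ --- they lie at the points $U\ominus V^\delta$ with $U\in E_\fa$ and $\delta\in\Gal(\sR(\fg)/\sR(\ff))$ --- and shows that when $j=0$ each pole with $U\neq 0$ has order $k$ with leading Laurent coefficient exactly $(k-1)!\sum_{i\in I}\chi_0(\fb_i)/\varphi(\fb_i)^k$. \emph{This} is where \eqref{eq:condition-rho} enters: it forces that leading coefficient to have valuation $\ord_\pi((k-1)!)$. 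The character sum is a polar coefficient of $\rational$, not a factor of an $L$-value. One then writes $\rational(P)=g(P)\cdot\prod(x(P\oplus V^\delta)-x(Q))^{-k}$ with $g$ integral; the product is a $\pi$-adic unit on almost all of $E_{q^\infty}$, and matching leading terms at the poles forces $\ord_\pi(g)=0$, whence $\ord_\pi(\rational)=\ord_\pi((k-1)!)$ and $C_\Psi=0$. The restriction $j=0$ is what makes $\Dnotk=\partial^k\log$ purely holomorphic, so the pole structure of $\Dnotk(\gamma_{\fb,\fa})$ is transparent --- this is the ``cleaner description'' you anticipated, but it lives on the rational-function side, not on the $L$-value side.
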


\begin{rem}
If $p\nmid [\sR(\ff):K]$, then it is easy to show (see Remark~\ref{remark: sufficient conditions for technical hypothesis}) that one may multiply $\Psi$ by a character $\rho$ of $\Gal(\sR(\ff)/K)$ such that the technical hypothesis is satisfied.
\end{rem}

\subsection*{Outline of proofs} The proofs of Theorems~\ref{thmA} and \ref{thmB} follow closely the line of argument of \cite[Theorem~6.9]{Lam15}.
It consists of the following ingredients:
\begin{itemize}
 \item[(1)]Establish a theory of Gamma transform of "elliptic function measures" on $\Zq^2$, which are measures that arise from a rational function on an elliptic curve.
 \item[(2)]Show that the $\pi$-adic valuations of the aforementioned Gamma transforms have the same $p$-adic valuation for almost all finite characters on $\Zq^2$.
 \item[(3)]Show that by defining an elliptic function measure (see Definition~\ref{def:elliptic-fun-meas}) arising from a rational function on the CM elliptic curve $E$ attached to $\Psi$, the Gamma transforms of this measure is related to the special values of $L$-series that we are interested in.
This proves Theorem~\ref{thmA}.
\item[(4)]To prove Theorem~\ref{thmB}, we show that the $\pi$-adic valuation discussed in (2) is zero under our additional hypotheses.
\end{itemize}
Step (1) is carried out in Section~\ref{section: distribution and measures}.
We follow the strategy of Lamplugh in \cite[Section~3]{Lam15}, where the theory for elliptic function measures on $\Zq$ was developed.
To execute (2), we use a lemma of Hida on the Zariski density of characters on $\Zq^d$ from \cite{hida1} to prove a result on the algebraic independence of functions on elliptic curves with positive characteristic.
In particular, we prove Theorem~\ref{thm:Hida-Lam}, which is a two-variable version of \cite[Theorem~4.9]{Lam15}.
Next, we prove Theorem~\ref{thm:valuation-gamma-transform}, which completes step (2) outlined above.
The corresponding 1-variable version of this theorem was proved in \cite[Section~5]{Lam15}.
The construction of the elliptic function measure of step (3) is discussed in Section~\ref{S:elliptic-function}; this is a generalization of the rational function on the CM elliptic curve $E$ utilized in \cite[Section~6.3]{Lam15} and crucially uses the work of E.~de Shalit \cite{dS87}.
The link between the Gamma transforms of this elliptic function measure and the $L$-values of interest is given by Lemma~\ref{thm: Lam15 6.6}.
In step (2), we see that the $\pi$-adic valuation mentioned above is in fact given by the valuation of the rational function (see Definition~\ref{defn:ord}).
Using ideas of the proof of \cite[Lemma~6.7]{Lam15} in the one-variable case, we show in Lemma~\ref{lemma: Lam15 Lemma 6.7} that this valuation is zero; this allows us to conclude step (4).

\begin{rem}
While Theorems~\ref{thmA} and \ref{thmB} are deduced using Lamplugh's techniques developed in \cite{Lam15}, our results are strictly stronger than the one-variable analogue \cite[Theorem~6.9]{Lam15}.
Indeed, after identifying the characters of $\Gal(F_\infty/F)$ with a subset of $\GG_{m/\overline{\QQ}_q}^2$, the Zariski closure of the set of characters given by \emph{loc. cit.} is one copy of $\GG_{m/\overline{\QQ}_q}$.
In particular, it is not Zariski dense in $\GG_{m/\overline{\QQ}_q}^2$.

Furthermore, we consider Hecke characters of much more general infinity type than the ones considered in \cite{Lam15}.
In addition, the class number of $K$ is assumed to be $1$ in \cite{Lam15}, whereas Theorem~\ref{thmA} assumes that $q$ does not divide {$ [\sR(\ff):K]$ instead.}
\end{rem}

\begin{rem}
Using an argument similar to the one presented in \cite[Section~7]{Lam15}, we expect that {Theorem~\ref{thmB}} combined with the Iwasawa main conjecture (proved by K.~Rubin) should show that the $p$-part of the class groups over a $\Zq^2$-tower is "generically zero".
However, it does not seem to be enough to give a generalization of \cite[Theorem~7.10]{Lam15} in our setting, unless we replace "almost all" by "all but finitely many".
\end{rem}

\noindent We conclude by discussing some follow-up questions.
\begin{itemize}
\item In \cite{KL2}, we study the growth of the $p$-part of the class groups in the anticyclotomic $\Zq$-extension making use of the aforementioned result of Hida.
%Note that unlike \cite[Theorem~1.3]{Lam15}, we do not expect the $p$-part of the class groups to always stabilize since the corresponding Hecke $L$-values are not always generically zero modulo $\pi$. \green{[Perhaps we can remove the last sentence since we only studied settings where the $p$-class groups stabilize.]}
\item Similar to how we build on Lamplugh's results to obtain {our results}, it may be possible to prove a similar result for Hecke characters of $q$-power conductor over general CM fields, relying on results of Hida and Hsieh on anticyclotomic characters.
\item It may also be interesting to generalize {our results} to the setting of Hida families, utilizing ideas of Burungale developed in \cite{burungale}.
\end{itemize}

%------------------------------------------
\section*{Acknowledgements}
%------------------------------------------
DK thanks Jack Lamplugh for providing a copy of his thesis.
AL thanks Ashay Burungale for helpful discussions and for his comments on an earlier draft.
DK is supported by the PIMS postdoctoral fellowship.
AL is supported by the NSERC Discovery Grants Program RGPIN-2020-04259 and RGPAS-2020-00096.
This work was initiated during the thematic semester "Number Theory -- Cohomology in Arithmetic" at Centre de Recherches Math{\'e}matiques (CRM) in Fall 2020.
The authors thank the CRM for the hospitality and generous supports.
Finally, we thank the referee for their comments on earlier versions of the article and their valuable suggestions, which led to the removal of several technical hypotheses from our main results.

%------------------------------------------
\section{Basic Notions}
\label{section: preliminaries}
%------------------------------------------
Let $K$ be a fixed imaginary quadratic field of discriminant $d_K$ and $H$ denote its Hilbert class field.
Throughout, we assume that $q$ is coprime to the class number of $K$ {and we fix a Hecke character $\Psi$ given as in the statement of Theorem~\ref{thmA}.
The character $\overline{\Psi}\circ N^{-j}$ (where $N$ is the norm map on $K$) is of infinity-type $(0,k-j)$.
There exists a character $\chi_0$ of $\Gal(\sR(\ff)/K)$ and an elliptic curve $E$ defined over $\sR(\ff)$ with complex multiplication by $\cO_K$, i.e., $\cO_K \simeq \End(E)$, such that 
\[
\overline{\Psi} N^{-j}=\overline{\varphi^{k-j}}\chi_0,
\]
where $\varphi$ is a Hecke character of infinity type $(1,0)$ satisfying
\[
\psi=\varphi\circ N_{\sR(\ff)/K}
\]
with $\psi$ being the Hecke character over $\sR(\ff)$ attached to $E$.
Furthermore, $\sR(\ff)(E_{\mathrm{tor}})$ is an abelian extension of $K$.
(See \cite[Chapter II, proofs of Theorems~4.12 and 4.14]{dS87} where the existence of $E$ and $\chi_0$ is discussed.)
} 
Let $q\ge 5$ be a prime number that splits in $K$, i.e., 
\[q\cO_K = \fq {\fq^*} \textrm{ with } \fq\neq {\fq^*}.\]

For any integral ideal $\fa $ in $\cO_K$, we write $E_{\fa}$ to denote
\[\ker\left( \fa: E \rightarrow E\right).\]
We write $\mu_K$ to denote the set of roots of unity in $K$ and $w_K$ to denote the size of this set.
%If $\fa$ is an integral ideal of $\cO_K$, write $w_{\fa}$ to denote the number of roots of unity congruent to 1 modulo $\fa$.

We fix a different prime $p$ {such that $p \cO_K= \fp {\fp^*}$ in $K$ with $\fp \neq \fp^*$ and $\gcd(p, 6\ff q)=1$.
Note in particular that $E$ has good reduction at all primes above $pq$.}
%------------------------------------------
\section{Distributions and measures on \texorpdfstring{$\Zq^2$}{}}
\label{section: distribution and measures}
%------------------------------------------
The goal of this section is to generalize the notion of Gamma transform from \cite{sinnott} and elliptic function measures studied in \cite[Section~3.2]{Lam15} to the two-variable setting.

Let $E$ be {the elliptic curve given in \S\ref{section: preliminaries}} and $k/\QQ_p$ be a finite unramified extension containing $\QQ_p(E_{\mathfrak{f}q})$.
Set $J=k(\mu_{q^\infty})$.
This is the unramified $\Zq$-extension of $k$ (since $\mu_{q}\subset k$ by assumption).
Let $\fO$ denote the ring of integers of $J$.
Fix a uniformizer $\pi$ of $k$ and let $\ord_{\pi}$ denote the normalized valuation map
\[
\ord_{\pi}: J \rightarrow \ZZ \cup \{ \infty\}.
\]

\begin{Defi}
\label{def:dist}
Let $\alpha$ be a \emph{$J$-valued distribution} on $\Zq^2$, i.e., $\alpha$ is a finitely additive function on the set of compact open subsets of $\ZZ_q^2$ with values in $J$.
\begin{itemize}
 \item[(i)] Given any $c=(c_1,c_2)\in(\Zq^\times)^2$, define $\alpha\circ c$ to be the distribution given by $\alpha\circ c(X)=\alpha(c X)$ for all open compact subsets $X$ of $\Zq^2$.
 \item[(ii)] The \emph{Fourier transform} of $\alpha$ is defined to be
 \begin{align*}
 \hat{\alpha}:\mu_{q^\infty}^2&\rightarrow J\\
 (\zeta_1,\zeta_2)&\mapsto \int_{(x,y)\in\Zq^2}\zeta_1^x\zeta_2^yd\alpha(x,y).
\end{align*}
 \item[(iii)] Given a finite character $\chi $ on $(\Zq^\times)^2$ with values in $J$, we define \emph{Leopoldt's $\Gamma$-transform} as
 \[
 \Gamma_\alpha(\chi)=\int_{\Zq^2}\chi d\alpha,
 \]
 where we extend $\chi$ to $\Zq^2$ by sending all elements not inside $(\Zq^\times)^2$ to zero.
 \item[(iv)]We call $\alpha$ a \emph{measure} on $\ZZ_q^2$ if the image of $\alpha$ has bounded values with respect to $\ord_\pi$.
\end{itemize}
\end{Defi}

\begin{Lemma}
\label{lem:Gamma-Fourier}
Suppose that $\chi$ is a finite-order character on $(\ZZ_q^\times)^2$ factoring through $(\ZZ/q^m)^\times \times (\ZZ/q^{n})^\times$, then
\[
\Gamma_\alpha(\chi)=\tau(\chi)\sum_{\underline x\in \ZZ/q^m\times\ZZ/q^n}\chi^{-1}(\underline x)\hat\alpha(\underline\zeta^{\underline x}),
\]
where $\underline\zeta=(\zeta_m,\zeta_n)$ with $\zeta_m$ and $\zeta_n$ being primitive $p^m$-th and $p^n$-th roots of unity respectively, and $\tau(\chi)$ is the \emph{Gauss sum} of $\chi$ defined by
\[
\tau(\chi)=\frac{1}{q^{m+n}}\sum_{ (x_1,x_2)\in \ZZ/q^m\times \ZZ/q^n}\chi( x_1,x_2)\zeta_m^{-x_1}\zeta_n^{-x_2}.
\]
\end{Lemma}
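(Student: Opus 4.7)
The idea is to reduce the lemma to the pointwise Fourier-inversion identity
\[
\chi(y_1, y_2) \;=\; \tau(\chi) \sum_{\underline x \in \ZZ/q^m \times \ZZ/q^n} \chi^{-1}(\underline x) \, \zeta_m^{x_1 y_1} \zeta_n^{x_2 y_2}
\qquad \text{for } (y_1, y_2) \in \Zq^2,
\]
where both sides are extended by zero when $y_1$ or $y_2$ lies in $q\Zq$. Granting this, the lemma follows by integrating against $\alpha$ and exchanging the finite sum with the integral; the inner integral $\int_{\Zq^2} \zeta_m^{x_1 y_1} \zeta_n^{x_2 y_2} \, d\alpha(y_1, y_2)$ is exactly $\hat{\alpha}(\underline\zeta^{\underline x})$ by Definition~\ref{def:dist}(ii). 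So the entire content of the lemma lies in the pointwise identity.

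To prove the pointwise identity, I factor everything in sight. Since $(\ZZ/q^m)^\times \times (\ZZ/q^n)^\times$ is a direct product of abelian groups, one has $\chi = \chi_1 \otimes \chi_2$ for characters $\chi_i$ of $(\ZZ/q^{m_i})^\times$, with $(m_1,m_2) = (m,n)$. Directly from the definition, $\tau(\chi) = \tau(\chi_1)\tau(\chi_2)$, and the right-hand side of the pointwise identity factors as a product of one-variable sums; this reduces the problem to the classical Sinnott-type identity
\[
\chi_i(y) \;=\; \tau(\chi_i) \sum_{x \in \ZZ/q^{m_i}} \chi_i^{-1}(x)\, \zeta_{m_i}^{xy}, \qquad y \in \Zq,
\]
which one verifies assuming $\chi_i$ is primitive modulo $q^{m_i}$ (the implicit reading of the lemma, since $\tau(\chi_i) = 0$ otherwise).

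For this one-variable identity, I split by whether $y$ is a unit. If $y \in \Zq^\times$, the substitution $x \mapsto y^{-1} u$ transforms the inner sum into $\chi_i(y) \sum_u \chi_i^{-1}(u) \zeta_{m_i}^u$; combining this with the rewriting $\tau(\chi_i) = \chi_i(-1) q^{-m_i} \sum_a \chi_i(a) \zeta_{m_i}^a$ and the classical relation $\bigl(\sum_a \chi_i(a) \zeta_{m_i}^a\bigr)\bigl(\sum_b \chi_i^{-1}(b) \zeta_{m_i}^b\bigr) = \chi_i(-1) q^{m_i}$ for primitive characters, the product collapses exactly to $\chi_i(y)$. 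If $y \in q\Zq$, then $\zeta_{m_i}^{xy}$ depends only on $x$ modulo $q^{m_i-1}$, so grouping the sum by residues modulo $q^{m_i-1}$ reduces it to a weighted sum of partial sums $\sum_{\tilde x \equiv c \pmod{q^{m_i-1}}} \chi_i^{-1}(\tilde x)$, each of which vanishes because $\chi_i$ is primitive and therefore does not descend to $(\ZZ/q^{m_i-1})^\times$; this matches the extension-by-zero value $\chi_i(y) = 0$.

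I do not anticipate any genuine obstacle: the argument is essentially two-variable Fourier inversion once primitivity is pinned down. The only care needed is to track the sign $\chi_i(-1)$ arising from the convention $\zeta_{m_i}^{-x}$ in the definition of $\tau$, and to confirm the implicit primitivity hypothesis on $\chi$ (i.e., that $(q^m, q^n)$ is its conductor).
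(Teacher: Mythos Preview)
Your argument is correct and is precisely the Fourier-inversion computation that the paper defers to by citing \cite[proof of Proposition~2.2]{sinnott} and \cite[proof of Lemma~2.2.3]{Lam-thesis}; there is no separate proof in the paper to compare against. Your observation that the identity requires $\chi$ to be primitive of conductor $(q^m,q^n)$ is the right reading of the statement (and is how the lemma is used later in the paper), so nothing is missing.
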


\begin{proof}
See \cite[proof of Proposition~2.2, equation (2.6)]{sinnott} (or \cite[proof of Lemma~2.2.3]{Lam-thesis}).
\end{proof}

\begin{Lemma}
\label{lem:dist-fourier}
A distribution $\alpha$ on $\Zq^2$ is uniquely determined by its Fourier transform $\hat\alpha$.
\end{Lemma}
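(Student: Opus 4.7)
The plan is to invoke the standard Fourier inversion argument for locally constant functions on $\Zq^2$, identical in spirit to \cite[proof of Proposition~2.2]{sinnott} or \cite[Lemma~2.2.3]{Lam-thesis}, adapted to the two-variable setting.

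First I would reduce the problem to recovering $\alpha$ on a generating family of compact open sets. Since a distribution is a finitely additive function, $\alpha$ is determined once its values on the basic cosets $a+q^n\Zq^2$ are known, for all $n\ge 1$ and $a=(a_1,a_2)\in(\ZZ/q^n)^2$. So it suffices to express $\alpha(a+q^n\Zq^2)$ in terms of $\hat\alpha$.

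Next I would use orthogonality of $q^n$-th roots of unity to expand the indicator function of $a+q^n\Zq^2$. Writing $\zeta_n$ for a primitive $q^n$-th root of unity, the identity
\[
\mathbf{1}_{a+q^n\Zq^2}(x,y)=\frac{1}{q^{2n}}\sum_{j,k=0}^{q^n-1}\zeta_n^{-ja_1-ka_2}(\zeta_n^j)^x(\zeta_n^k)^y
\]
holds pointwise on $\Zq^2$, since each factor reduces to the usual discrete delta on $\ZZ/q^n$. Integrating termwise against $\alpha$ and applying Definition~\ref{def:dist}(ii) yields
\[
\alpha(a+q^n\Zq^2)=\frac{1}{q^{2n}}\sum_{j,k=0}^{q^n-1}\zeta_n^{-ja_1-ka_2}\,\hat\alpha(\zeta_n^j,\zeta_n^k).
\]
Thus $\alpha(a+q^n\Zq^2)$ is entirely determined by the values of $\hat\alpha$ on $\mu_{q^n}\times\mu_{q^n}\subset\mu_{q^\infty}^2$.

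Finally I would conclude: if two distributions $\alpha_1,\alpha_2$ satisfy $\hat\alpha_1=\hat\alpha_2$, then the displayed formula gives $\alpha_1(a+q^n\Zq^2)=\alpha_2(a+q^n\Zq^2)$ for every $n$ and every $a$, and hence $\alpha_1=\alpha_2$ by finite additivity. There is no real obstacle here; the only point requiring slight care is the termwise integration, which is automatic because each indicator function is a finite sum of characters, and the Fourier transform is defined precisely as the integral of such characters against $\alpha$.
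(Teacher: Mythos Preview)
Your proof is correct and follows essentially the same Fourier inversion argument as the paper: both expand the indicator of a basic coset as a finite linear combination of characters $(x,y)\mapsto\zeta_1^x\zeta_2^y$ and integrate to recover $\alpha$ from $\hat\alpha$. The only cosmetic difference is that the paper uses the slightly more general rectangles $(a+q^m\Zq)\times(b+q^n\Zq)$ with possibly distinct $m,n$, whereas you take $m=n$; since your cosets still form a basis for the compact open sets, this changes nothing.
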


\begin{proof}
The characteristic function on the open subset $U_{a,b} := (a+q^m\Zq)\times (b+q^n\Zq)$ of $\Zq^2$ satisfies
\[
\mathbf{1}|_{U_{a,b}}=\frac{1}{q^{m+n}}\sum_{(\zeta_1,\zeta_2)\in\mu_{q^m }\times\mu_{q^n}}\zeta_1^{-a}\zeta_2^{-b}\chi_{(\zeta_1,\zeta_2)},
\]
where $\chi_{(\zeta_1,\zeta_2)}:\Zq^2\mapsto J$ is the character sending $(x,y)$ to $\zeta_1^{x}\zeta_2^{y}$.
In particular, we see that $\alpha\left(U_{a,b}\right)$ is a linear combination of $\hat\alpha(\zeta_1,\zeta_2)$.
Since the subsets $U_{a,b}$ form a basis of open compact sets of $\Zq^2$, $\alpha$ is uniquely determined by $\hat\alpha$.
\end{proof}

For the rest of the article, we fix an isomorphism of groups $\delta:(\mu_{q^\infty})^2\stackrel{\sim}{\longrightarrow} E_{q^\infty}$.

\begin{Defi}\label{defn:ellipticmeasure}
A $J$-valued distribution $\alpha$ on $\Zq^2$ is an \emph{elliptic function measure} for our fixed elliptic curve $E$ (with respect to $\delta$) if there exists a rational function $R\in J(E)$ such that for almost all $\underline\zeta\in(\mu_{q^\infty})^2$, we have
\[
\hat\alpha(\underline\zeta)=R(\delta(\underline\zeta)).
\]
\end{Defi}

\begin{Lemma}
\label{lem:integral-elliptic-function}
Let $f\in \fO[x,y]$ such that the image of $f$ in $J(E)$ is non-zero.
Then, there exists a unique integer $n\ge0$ such that 
\[
\ord_\pi(f(Q))\ge n\ \forall Q\in E_{q^\infty}\setminus\{0\}
\]
with equality holding for almost all $Q\in E_{q^\infty}$.
\end{Lemma}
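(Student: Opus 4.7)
The plan is to reduce everything modulo $\pi$, exploiting that $E$ has good reduction at $p$ and that $q^\infty$-torsion is étale in this residue characteristic (since $p\neq q$). Fix an integral Weierstrass model $W(x,y)\in\fO[x,y]$ for $E$, so that $J(E)=\mathrm{Frac}\bigl(\fO[x,y]/(W)\bigr)\otimes_\fO J$. Because good reduction makes the affine coordinate ring $A:=\fO[x,y]/(W)$ flat over $\fO$, the element $\pi$ is a non-zero-divisor in $A$. The image of $f$ in $A$ is nonzero by hypothesis, so there is a unique largest integer $n\ge 0$ with $f\equiv \pi^n g\pmod{W}$ for some $g\in\fO[x,y]$ whose image $\bar g$ in the reduction $\bar A=\bar{\FF}[x,y]/(\bar W)$ (the coordinate ring of the reduced curve $\bar E$) is nonzero. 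Lifting and absorbing the relation, we may write
\[
f = \pi^n g + W\cdot h,\qquad g,h\in\fO[x,y],\ \bar g\neq 0\text{ in }\bar A.
\]

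Second, I would verify that every $Q\in E_{q^\infty}\setminus\{0\}$ has integral coordinates over $J$. Since $p\neq q$, the group scheme $E[q^\infty]$ is étale over $\fO$, so each $Q$ is defined over the ring of integers of some finite (unramified-above-$\pi$, in fact) extension $J'/J$, and the reduction map $E_{q^\infty}\hookrightarrow\bar E(\overline{\FF_p})$ is injective. Writing $Q=(x_0,y_0)$ with $x_0,y_0\in\cO_{J'}$ and using $W(Q)=0$, we get
\[
f(Q)=\pi^n g(Q),\qquad\text{hence}\qquad \ord_\pi(f(Q))=n+\ord_\pi(g(Q))\ge n,
\]
where the inequality uses integrality of $Q$ together with $g\in\fO[x,y]$.

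Third, equality $\ord_\pi(f(Q))=n$ holds precisely when $\bar g(\bar Q)\neq 0$. Since $\bar g\in\bar{\FF}(\bar E)^\times$, it has only finitely many zeros on $\bar E$, and the injectivity of reduction on $E_{q^\infty}$ transports this to the statement that $\bar g(\bar Q)=0$ for only finitely many $Q\in E_{q^\infty}$. Thus equality holds for all but finitely many $Q\in E_{q^\infty}$, which is "almost all" in the required sense. Uniqueness of $n$ is immediate: any candidate must equal $\ord_\pi(f(Q))$ for some (indeed almost every) $Q$, and two such integers agreeing at almost every $Q$ must coincide.

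The main technical point to be careful about is step (1): the factorization $f=\pi^n g+Wh$ must take place \emph{inside the coordinate ring of the integral model}, not merely inside $\fO[x,y]$, since $f$ itself might have $\pi$-divisible content only after reducing modulo $W$. Flatness of $A$ over $\fO$ (equivalently, good reduction of $E$) is what allows us to read off $n$ as the $\pi$-adic order of the class of $f$ in $A$ and to find the nonzero reduction $\bar g\in\bar A$. Once this is in hand, the rest is a clean consequence of the étaleness of $q^\infty$-torsion away from $p$ and the finiteness of zeros of a nonzero function on $\bar E$.
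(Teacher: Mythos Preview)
Your argument is correct and follows essentially the same line as the paper's (which defers to \cite[Lemma~3.2]{Lam15}): factor out the maximal power of $\pi$ in the affine coordinate ring of the integral model, use good reduction at $p$ together with $p\neq q$ to see that points of $E_{q^\infty}\setminus\{0\}$ are integral and reduce injectively, and then conclude from the finiteness of zeros of the nonzero reduction $\bar g$ on $\bar E$. One small simplification: in the present setup $E_{q^\infty}\subset E(J)$ already (the extension $k(E_{q^\infty})/k$ is unramified and pro-$q$, hence contained in $J$), so you do not need to pass to an auxiliary extension $J'/J$.
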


\begin{proof}
The proof of \cite[Lemma~3.2]{Lam15} goes through in verbatim on replacing $E_{\fq^\infty}$ by $E_{q^\infty}$.
\end{proof}

This lemma allows us to define a valuation on $J(E)$.
\begin{Defi}\label{defn:ord}
Given an $R\in J(E)$.
If $R\ne 0$, we define $\ord_\pi(R)$ to be the integer $n $ such that $\ord_\pi(R(Q))=n$ for almost all $Q\in E_{q^\infty}$.
If $R=0$, we set $\ord_\pi(R)=\infty$.
\end{Defi}

By Lemma~\ref{lem:integral-elliptic-function}, if $\alpha$ is an elliptic function measure, then it is in fact a measure (not just a distribution) since the values of $\alpha$ are linear combinations of $\hat\alpha=R\circ \delta$ as we have seen in the proof of Lemma~\ref{lem:dist-fourier} and $\frac{1}{q^{m+n}}\in \fO^\times$ (as $p\ne q$).

Note that for any given rational function $R\in J(E)$, we can define a $J$-valued measure attached to $R$ as given by the following lemma:

\begin{Lemma}
\label{lem:EFM}
Let $R\in J(E)$ be a rational function.
There exists a unique measure $\alpha$ on $\Zq^2$ such that the Fourier transform $\hat\alpha$ coincides with $R\circ \delta$.
In other words, $\alpha$ is an elliptic function measure associated to $R$ in the sense of Definition~\ref{defn:ellipticmeasure}.
\end{Lemma}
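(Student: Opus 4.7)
The strategy splits into existence and uniqueness. \emph{Uniqueness} is a direct consequence of Lemma~\ref{lem:dist-fourier}: a distribution on $\Zq^2$ is pinned down by its Fourier transform. Once we fix $\hat\alpha$ at every point of $(\mu_{q^\infty})^2$, the distribution $\alpha$ is determined; the requirement $\hat\alpha = R\circ\delta$ fixes the values of $\hat\alpha$ outside a finite set $S \subset (\mu_{q^\infty})^2$ consisting of the points above the poles of $R$, and we adopt the canonical convention $\hat\alpha(\underline\zeta) = 0$ for $\underline\zeta \in S$.

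For \emph{existence}, I will build $\alpha$ from the Fourier inversion formula appearing in the proof of Lemma~\ref{lem:dist-fourier}. Define
\[
g(\underline\zeta) :=
\begin{cases}
R(\delta(\underline\zeta)) & \text{if $R$ is regular at $\delta(\underline\zeta)$,} \\
0 & \text{otherwise,}
\end{cases}
\]
and then set
\[
\alpha(U_{a,b}^{(m,n)}) := \frac{1}{q^{m+n}} \sum_{(\zeta_1,\zeta_2) \in \mu_{q^m} \times \mu_{q^n}} \zeta_1^{-a}\zeta_2^{-b}\, g(\zeta_1,\zeta_2)
\]
on the basis of compact opens $U_{a,b}^{(m,n)} := (a+q^m\Zq) \times (b+q^n\Zq)$. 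Finite additivity
\[
\alpha(U_{a,b}^{(m,n)}) = \sum_{c,d=0}^{q-1} \alpha(U_{a+cq^m,\, b+dq^n}^{(m+1,n+1)})
\]
follows by expanding the right-hand side, interchanging the order of summation, and applying the orthogonality relation $\sum_{c=0}^{q-1}\eta^{-cq^m} = q$ if $\eta \in \mu_{q^m}$ and $0$ otherwise. This step is purely formal and works for any choice of $g$. By construction $\hat\alpha = g$, so $\hat\alpha$ coincides with $R \circ \delta$ wherever $R$ is regular, verifying the elliptic-function-measure condition.

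The main technical point is upgrading $\alpha$ from a distribution to a \emph{measure}. Since $p \neq q$, both the factor $1/q^{m+n}$ and the roots of unity $\zeta_1^{-a}\zeta_2^{-b}$ lie in $\fO^\times$, so $\ord_\pi(\alpha(U_{a,b}^{(m,n)}))$ is bounded below by $\inf_{\underline\zeta}\ord_\pi(g(\underline\zeta))$. The convention $g = 0$ at the poles of $R$ contributes $+\infty$ to this infimum, so it suffices to bound $\ord_\pi(R(Q))$ from below as $Q$ ranges over the regular points in $E_{q^\infty}$. This is essentially the content of Definition~\ref{defn:ord}, obtained from Lemma~\ref{lem:integral-elliptic-function} by writing $R = f_1/f_2$ with $f_i \in \fO[x,y]$ and applying the polynomial version to each factor. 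The main obstacle I anticipate is precisely this last step: ensuring that the zeros of the denominator $f_2$ do not spoil the global lower bound on $\ord_\pi(R)$. The convention $g = 0$ at poles handles this cleanly, and everything else reduces to the Fourier bookkeeping already developed for Lemma~\ref{lem:dist-fourier}.
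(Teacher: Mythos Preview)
Your proof is correct and follows the same strategy as the paper: define $\alpha$ on basic opens by the Fourier inversion formula from the proof of Lemma~\ref{lem:dist-fourier}, check additivity by the orthogonality of characters, and deduce boundedness from Lemma~\ref{lem:integral-elliptic-function} together with $q^{-(m+n)}\in\fO^\times$. You are more explicit than the paper on two points---the convention at the finitely many poles of $R$ and the passage from polynomials to a quotient $R=f_1/f_2$---but the underlying argument is identical.
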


\begin{proof}
By the proof of Lemma~\ref{lem:dist-fourier}, we may define a measure $\alpha$ satisfying
\[
\alpha\left((a+q^m\Zq)\times (b+q^n\Zq)\right)=\frac{1}{q^{m+n}}\sum_{(\zeta_1,\zeta_2)\in\mu_{q^m }\times\mu_{q^n}}\zeta_1^{-a}\zeta_2^{-b}R\circ \delta (\zeta_1,\zeta_2).
\]
It follows from direct calculations that $\alpha$ is additive and that $\hat\alpha =R\circ \delta$.
\end{proof}

\begin{comment}
\begin{Prop}
If $\alpha$ is a $J$-valued elliptic function measure for $E$ on $\Zq^2$, then so is $[\phi]\alpha$ for any locally constant function $\phi: \Zq^2 \rightarrow J$.
Furthermore, if $\phi$ takes values in $\fO$, then
\[
\ord_{\pi}\left( [\phi]\alpha\right)\geq \ord_{\pi}\left( \alpha\right).
\]
\end{Prop}

\begin{proof}
See \cite[Proposition~2.2.7]{Lam-thesis}.
The proof generalizes verbatim to our situation.
\end{proof}
\end{comment}

We now show how Gamma transforms behave under Galois actions.
This will be utilized in subsequent sections.
Let us define the following homomorphisms of groups
\begin{align*}
 \chi_\mu&:\Gal(J/k)\hookrightarrow \Aut(\mu_{q^\infty})^2\simeq (\Zq^\times)^2,\\
 \chi_E&:\Gal(J/k)\hookrightarrow \Aut(E_{\fq^\infty})\times\Aut(E_{\overline\fq^\infty})\simeq (\Zq^\times)^2.
\end{align*}
Note that $\chi_\mu=\chi_{\cyc}\times\chi_{\cyc}$, where $\chi_{\cyc}$ is the cyclotomic character.

\begin{Defi}\label{def:elliptic-fun-meas}
An \textbf{elliptic function measure} $\alpha$ for $E$ is said to be \emph{defined over $k$}, if $\hat\alpha=R\circ\delta$ for a rational function $R\in k(E)$.
\end{Defi}

\begin{Lemma}
\label{lem:Gamma-sigma}
Suppose that $\alpha$ is an elliptic function measure defined over $k$.
Then, for almost all finite-order characters $\kappa$ of $(\Zq^\times)^2$ and for all $\sigma\in\Gal(J/k)$, we have
\[
\Gamma_\alpha(\kappa)^ \sigma=\frac{\kappa^\sigma(\chi_E(\sigma))}{\kappa^\sigma(\chi_\mu(\sigma))}\Gamma_\alpha(\kappa^\sigma).
\]
\end{Lemma}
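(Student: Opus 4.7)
The plan is to reduce the identity to a Fourier-side calculation using Lemmas~\ref{lem:Gamma-Fourier} and~\ref{lem:dist-fourier} together with the fact that $R\in k(E)$ is fixed by $\sigma$. Write $\alpha^\sigma$ for the distribution defined by $\alpha^\sigma(X)=\sigma(\alpha(X))$ and $\kappa^\sigma=\sigma\circ\kappa$. Since the values of $\kappa^\sigma$ are roots of unity and $\sigma$ is a field automorphism fixing $\QQ_p$, a direct Riemann-sum argument on compact-open partitions of $\Zq^2$ gives
\[
\Gamma_\alpha(\kappa)^\sigma=\Gamma_{\alpha^\sigma}(\kappa^\sigma),
\]
so it suffices to relate $\Gamma_{\alpha^\sigma}(\kappa^\sigma)$ back to $\Gamma_\alpha(\kappa^\sigma)$.

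The next step is to identify $\widehat{\alpha^\sigma}$ in terms of $\hat\alpha$. Applying $\sigma$ to the inversion formula obtained in the proof of Lemma~\ref{lem:dist-fourier} and then substituting $\sigma^{-1}\underline\zeta$ gives
\[
\widehat{\alpha^\sigma}(\underline\zeta)=\sigma\bigl(\hat\alpha(\sigma^{-1}\underline\zeta)\bigr).
\]
Since $R$ is defined over $k$, I can push $\sigma$ through the rational function: $\sigma(R(P))=R(\sigma(P))$ for all $P\in E$. Using that $\delta$ intertwines the natural $\Zq^2$-actions on $(\mu_{q^\infty})^2$ and on the splitting $E_{q^\infty}=E_{\fq^\infty}\oplus E_{\overline\fq^\infty}$, the transformation rules $\sigma\cdot\underline\zeta=\chi_\mu(\sigma)\underline\zeta$ and $\sigma\cdot\delta(\underline\zeta)=\chi_E(\sigma)\delta(\underline\zeta)$ combine to yield
\[
\sigma\bigl(\delta(\sigma^{-1}\underline\zeta)\bigr)=\chi_E(\sigma)\chi_\mu(\sigma)^{-1}\cdot\delta(\underline\zeta)=\delta\bigl(\underline\zeta^{\chi_E(\sigma)\chi_\mu(\sigma)^{-1}}\bigr),
\]
so $\widehat{\alpha^\sigma}(\underline\zeta)=\hat\alpha\bigl(\underline\zeta^{\chi_E(\sigma)\chi_\mu(\sigma)^{-1}}\bigr)$ for almost all $\underline\zeta$.

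Finally, I substitute this formula for $\widehat{\alpha^\sigma}$ into the Gauss-sum expansion of $\Gamma_{\alpha^\sigma}(\kappa^\sigma)$ supplied by Lemma~\ref{lem:Gamma-Fourier} and perform the change of summation variable $\underline y=\chi_E(\sigma)\chi_\mu(\sigma)^{-1}\underline x$, which is a bijection on $\ZZ/q^m\times\ZZ/q^n$ because $\chi_E(\sigma)\chi_\mu(\sigma)^{-1}\in(\Zq^\times)^2$. This pulls the character value
\[
(\kappa^\sigma)^{-1}\bigl(\chi_\mu(\sigma)\chi_E(\sigma)^{-1}\bigr)=\frac{\kappa^\sigma(\chi_E(\sigma))}{\kappa^\sigma(\chi_\mu(\sigma))}
\]
out of the sum, while what remains recombines, via Lemma~\ref{lem:Gamma-Fourier} applied to $\alpha$ and $\kappa^\sigma$, into $\Gamma_\alpha(\kappa^\sigma)$. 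The main bookkeeping obstacle is keeping the two distinct Galois actions straight—the cyclotomic action via $\chi_\mu$ on $(\mu_{q^\infty})^2$ versus the elliptic action via $\chi_E$ on $E_{q^\infty}$—which is precisely what produces the twisting factor. The ``for almost all $\kappa$'' caveat enters because $\hat\alpha$ agrees with $R\circ\delta$ only off a finite exceptional set (coming from the poles of $R$ pulled back by $\delta$); for $\kappa$ of sufficiently deep conductor, the primitive roots of unity appearing in the Fourier sum avoid this set, so the identity holds.
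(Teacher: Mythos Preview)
Your proof is correct and uses the same essential ingredients as the paper's: Lemma~\ref{lem:Gamma-Fourier}, the $k$-rationality of $R$, and the two characters $\chi_\mu$, $\chi_E$. The organization differs only slightly. The paper applies $\sigma$ directly to the Gauss-sum expansion $\Gamma_\alpha(\kappa)=\tau(\kappa)\sum_{\underline x}\kappa^{-1}(\underline x)\hat\alpha(\underline\zeta^{\underline x})$ and computes the two pieces separately, obtaining $\tau(\kappa)^\sigma=\kappa^\sigma(\chi_\mu(\sigma))^{-1}\tau(\kappa^\sigma)$ from the cyclotomic action on the Gauss sum and $\hat\alpha(\underline\zeta^{\underline x})^\sigma=\hat\alpha(\underline\zeta^{\chi_E(\sigma)\underline x})$ from the elliptic action on the Fourier values; the $\chi_\mu^{-1}$ and $\chi_E$ factors thus arise from distinct sources. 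You instead introduce the auxiliary measure $\alpha^\sigma$ and fold both actions into the single identity $\widehat{\alpha^\sigma}(\underline\zeta)=\hat\alpha\bigl(\underline\zeta^{\chi_E(\sigma)\chi_\mu(\sigma)^{-1}}\bigr)$, after which the Gauss sum $\tau(\kappa^\sigma)$ appears unchanged on both sides and cancels. The substance is the same; your packaging is marginally more conceptual, while the paper's makes the separate origins of the two twisting factors more visible.
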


\begin{proof}
It follows from Lemma~\ref{lem:Gamma-Fourier} that
\[
\Gamma_\alpha(\kappa)^\sigma=\tau(\kappa)^\sigma\sum_{\underline x\in \ZZ/q^m\times\ZZ/q^n}\kappa^{-1}(\underline x)^\sigma\hat\alpha(\underline\zeta^{\underline x})^\sigma.
\]
We have
\begin{align*}
\tau(\kappa)^\sigma&=\frac{1}{q^{m+n}}\sum_{(x_1,x_2)\in \ZZ/q^m\times \ZZ/q^n}\kappa(x_1,x_2)^\sigma(\zeta_m^{-x_1}\zeta_n^{-x_2})^\sigma\\
&=\frac{1}{q^{m+n}}\sum_{(x_1,x_2)\in \ZZ/q^m\times \ZZ/q^n}\kappa(x_1,x_2)^\sigma\zeta_m^{-\chi_{\cyc}(\sigma)x_1}\zeta_n^{-\chi_{\cyc}(\sigma)x_2}\\
&=\frac{\kappa^\sigma(\chi_{\cyc}(\sigma),\chi_{\cyc}(\sigma))^{-1}}{q^{m+n}}\sum_{(x_1,x_2)\in \ZZ/q^m\times \ZZ/q^n}\kappa(x_1,x_2)^\sigma\zeta_m^{-x_1}\zeta_n^{-x_2}\\
&=\kappa^\sigma(\chi_\mu(\sigma))^{-1}\tau(\kappa^\sigma).
\end{align*}
Since $\alpha$ is an elliptic function measure, we have
\[
\hat\alpha(\underline\zeta^{\underline x})^\sigma=R(\delta(\underline\zeta^{\underline x})^\sigma)=R(\delta(\underline\zeta^{\chi_E(\sigma)\underline x}))=\hat\alpha(\underline\zeta^{\chi_E(\sigma)\underline x})
\]
for some $R\in k(E)$.
Therefore, combining these equations gives
\begin{align*}
\Gamma_\alpha(\kappa)^\sigma&= \kappa^\sigma(\chi_\mu(\sigma))^{-1}\tau(\kappa^\sigma) \sum_{\underline x\in \ZZ/q^m\times\ZZ/q^n}\kappa^{-1}(\underline x)^\sigma\hat\alpha(\underline\zeta^{\chi_E(\sigma)\underline x})\\
&=\frac{\kappa^\sigma(\chi_E(\sigma))}{\kappa^\sigma(\chi_\mu(\sigma))}\tau(\kappa^\sigma) \sum_{\underline x\in \ZZ/q^m\times\ZZ/q^n}\kappa^{-1}(\underline x)^\sigma\hat\alpha(\underline\zeta^{\underline x})\\
&=\frac{\kappa^\sigma(\chi_E(\sigma))}{\kappa^\sigma(\chi_\mu(\sigma))}\Gamma_\alpha(\kappa^\sigma)
\end{align*}
where the last equality follows from Lemma~\ref{lem:Gamma-Fourier} applied to $\kappa^\sigma$.
\end{proof}

%------------------------------------------
\section{Algebraic Independence Results}
%------------------------------------------
The main result of this section is Theorem~\ref{thm:Hida-Lam}, where we prove an algebraic independence result of functions on $E_{q^\infty}$ taking values in a \emph{finite field} whose characteristic is distinct from $q$.
The first step is Theorem~\ref{thm: Lamplugh 4.5}, which is an analogue of \cite[Proposition~3.1]{sinnott} (and also \cite[Theorem~4.5]{Lam15}).
This step involves proving an algebraic independence result of functions on $E_{q^\infty}$ taking values in a \emph{general field}, $\cF$.
Let $E$ be an elliptic curve as fixed in Section~\ref{section: preliminaries}.
We suppose that $E$ can be considered as a curve over the field $\cF$ (for example, the residue field of $H$ modulo a prime ideal).
Suppose that $q>3$ is a rational prime that splits in $\cO_K$ and $\charac(\cF)\neq q$.
This result essentially says that endomorphisms in $\End(E_{\fq^\infty})\times \End(E_{{\fq^*}^\infty})$ which are independent over $\End_{\cF}(E)$, are algebraically independent.

The following lemma is required for the proof of Theorem~\ref{thm: Lamplugh 4.5}.

\begin{Lemma}
\label{prop: Lam15 4.4}
Let $\Phi_1, \ldots, \Phi_s$ be non-trivial morphisms from $E^n$ to $E$ of the form
\[
\Phi_j: (P_i)_{i=1}^n \mapsto \sum_{i=1}^n \alpha_{ij}(P_i)
\]
where $\alpha_{ij}\in \End_{\cF}(E)$ for all $1\leq i \leq n$ and $1\leq j \leq s$.
Suppose that the only relation of the kind $\alpha\Phi_k = \beta \Phi_{\ell}$ for $\alpha, \beta\in \End_{\cF}(E)$ and $k\neq \ell$, is when $\alpha = \beta =0$.
If $r_1, \ldots, r_s\in \cF(E)$ with $\sum_{j=1}^s r_j \circ \Phi_j = 0$, then each $r_j$ is a constant function.
\end{Lemma}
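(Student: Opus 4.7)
The plan is to argue by contradiction and exploit the pole-divisor structure on an image abelian variety. First, I would assemble the morphisms $\Phi_1, \ldots, \Phi_s$ into a single morphism
\[
\Phi = (\Phi_1, \ldots, \Phi_s) : E^n \longrightarrow E^s,
\]
let $B = \Phi(E^n)$ be its image, an abelian subvariety of $E^s$, and write $\pi_j : E^s \to E$ for the $j$-th projection, so that $\Phi_j = \pi_j \circ \Phi$. Because $\Phi : E^n \twoheadrightarrow B$ is surjective, the hypothesis $\sum_j r_j \circ \Phi_j = 0$ is equivalent to the relation
\[
\sum_{j=1}^s r_j \circ \pi_j\big|_B = 0 \quad \text{in } \cF(B),
\]
and the non-proportionality assumption transfers (again by surjectivity of $\Phi$) to the statement that no two of the maps $\pi_k|_B, \pi_\ell|_B : B \to E$ with $k \neq \ell$ satisfy $\alpha \cdot \pi_k|_B = \beta \cdot \pi_\ell|_B$ for some $\alpha, \beta \in \End_\cF(E)$ not both zero.

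Next, I would suppose for contradiction that some $r_j$, say $r_1$, is non-constant and has a pole at a point $a \in E$. Since $\pi_1|_B$ is a non-zero homomorphism of abelian varieties it is surjective, and its fiber over $a$ is an effective divisor on $B$ whose irreducible components are translates of the identity component $\ker(\pi_1|_B)^\circ$. Along any such component $D_1$, the function $r_1 \circ \pi_1|_B$ has a genuine pole. For the displayed relation to hold, the pole of $r_1 \circ \pi_1|_B$ along $D_1$ must be cancelled by some $r_k \circ \pi_k|_B$ with $k \neq 1$. This forces a translate of $\ker(\pi_k|_B)^\circ$ to coincide with $D_1$, and translating both back to the origin then yields
\[
\ker(\pi_1|_B)^\circ = \ker(\pi_k|_B)^\circ
\]
as abelian subvarieties of $B$.

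Call this common identity component $K^\circ$. Both $\pi_1|_B$ and $\pi_k|_B$ descend to morphisms $\overline{\pi_1}, \overline{\pi_k} : A \to E$ on the quotient $A := B / K^\circ$, which is a one-dimensional abelian variety admitting a non-zero map to $E$, and so an elliptic curve isogenous to $E$. Using that $E$ has complex multiplication by $\cO_K$, the group $\Hom(A, E) \otimes \QQ$ is free of rank one as a module over $\End_\cF(E) \otimes \QQ$, so there exist non-zero $\alpha, \beta \in \End_\cF(E)$ with $\alpha \cdot \overline{\pi_1} = \beta \cdot \overline{\pi_k}$. Composing with the quotient $B \twoheadrightarrow A$ and then with $\Phi$ gives $\alpha \Phi_1 = \beta \Phi_k$, contradicting the hypothesis, and hence every $r_j$ must be constant.

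I expect the main obstacle to be the matching-of-components step: one has to check carefully that a translated component of $\ker(\pi_k|_B)$ can equal a translated component of $\ker(\pi_1|_B)$ only when the identity components themselves coincide, which should follow from translating both sides to the origin and using that an abelian subvariety contains the identity. A secondary technical issue is justifying the rank-one claim for $\Hom(A, E) \otimes \QQ$ in positive characteristic under only the assumption $\charac(\cF) \neq q$; this relies on $E$ being CM by $\cO_K$ and standard isogeny theory. Finally, if some $\pi_j|_B$ is inseparable, the multiplicities in the pole divisors need a little extra care, but since cancellation only requires matching of underlying divisors, this should not alter the core argument.
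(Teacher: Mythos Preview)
The paper does not prove this lemma itself; it simply cites \cite[Proposition~4.4]{Lam15}. Your pole-cancellation argument is correct and is the standard approach to such statements, going back to Sinnott in the $\GG_m$ setting, so it is almost certainly close to Lamplugh's own proof.

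On the concerns you flag: the translate-matching step works exactly as you outline, since if a translate of $\ker(\pi_1|_B)^\circ$ equals a translate of $\ker(\pi_k|_B)^\circ$ then the two connected subgroups differ by a translation, and both contain the identity, so they coincide. For the rank-one claim you do not actually need the CM hypothesis: once $A$ and $E$ are isogenous over $\cF$, any isogeny induces an isomorphism of left $\End_\cF(E)\otimes\QQ$-modules between $\Hom_\cF(A,E)\otimes\QQ$ and $\End_\cF(E)\otimes\QQ$, and the latter is a division algebra (either $\QQ$, an imaginary quadratic field, or a quaternion algebra), so any two non-zero elements are left-proportional. Finally, inseparability is harmless here because the cancellation argument only uses the support of the pole divisor, not multiplicities.
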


\begin{proof}
See {\cite[Proposition~4.4]{Lam15}}.
\end{proof}

\begin{Th}
\label{thm: Lamplugh 4.5}
Let $\cF$ be any field as above, and $E$ an elliptic curve defined over $\cF$ such that $\End_{\cF}(E)\simeq \cO_K$.
Suppose that $\underline{\eta}_1, \ldots, \underline{\eta}_s \in \End(E_{\fq^\infty})\times \End(E_{{\fq^*}^\infty})$ are such that $\alpha\underline{\eta}_k = \beta\underline{\eta}_{\ell}$ for $k\neq \ell$ and some $\alpha, \beta\in \End_{\cF}(E)$ only when $\alpha=\beta=0$.
Consider the function 
\[
R = \sum_{j=1}^s r_j \circ \underline{\eta}_j: E_{\fq^\infty}\times E_{{\fq^*}^\infty}\rightarrow \overline{\cF} \]
where $r_j \in \cF(E)$ and $\overline{\cF}$ denotes an algebraic closure of $\cF$.
If $R(Q)=0$ for all $Q\in E_{\fq^\infty}\times E_{{\fq^*}^\infty}$, then all $r_j$'s are constant functions.
\end{Th}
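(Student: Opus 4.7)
The plan is to reduce Theorem~\ref{thm: Lamplugh 4.5} to Lemma~\ref{prop: Lam15 4.4} (applied with $n=2$) through an approximation argument mirroring the one-variable proof of \cite[Theorem~4.5]{Lam15}. First I would identify $\underline{\eta}_j = (a_j, b_j) \in \cO_{K,\fq} \times \cO_{K,{\fq^*}}$ via the CM action on torsion, so that $\underline{\eta}_j(P, P') = a_j(P) + b_j(P')$ for every $(P, P') \in E_{\fq^\infty} \times E_{{\fq^*}^\infty}$. Because $\fq \ne {\fq^*}$, the Chinese Remainder Theorem gives that the diagonal image of $\cO_K$ is dense in $\cO_{K,\fq} \times \cO_{K,{\fq^*}}$; hence for any $m, n \ge 1$ I can select $\alpha_j, \beta_j \in \cO_K = \End_\cF(E)$ with $\alpha_j \equiv a_j \pmod{\fq^m}$ and $\beta_j \equiv b_j \pmod{{\fq^*}^n}$. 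These data assemble into morphisms $\Phi_j: E^2 \to E$, $\Phi_j(P, P') := \alpha_j(P) + \beta_j(P')$, coinciding with $\underline{\eta}_j$ on the finite set $E_{\fq^m} \times E_{{\fq^*}^n}$.

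Second, I would check that, by choosing the approximations generically within the infinite cosets $a_j + \fq^m$ and $b_j + {\fq^*}^n$, the $\Phi_j$'s satisfy the independence hypothesis of Lemma~\ref{prop: Lam15 4.4}. The relation $\alpha \Phi_k = \beta \Phi_\ell$ (with $k \ne \ell$ and $\alpha, \beta \in \cO_K$) unfolds into the pair of equalities $\alpha \alpha_k = \beta \alpha_\ell$ and $\alpha \beta_k = \beta \beta_\ell$ in $\cO_K$, which force $\alpha = \beta = 0$ exactly when $\alpha_k \beta_\ell - \alpha_\ell \beta_k \ne 0$. The hypothesis on the $\underline{\eta}_j$'s rules out the analogous proportionality relations in $\cO_{K,\fq} \times \cO_{K,{\fq^*}}$, and because each approximation coset is infinite, this non-proportionality can be propagated to the $\alpha_j, \beta_j$ simultaneously for all ordered pairs $(k, \ell)$. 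Setting $\tilde R := \sum_{j=1}^s r_j \circ \Phi_j \in \cF(E^2)$, the vanishing of $R$ on $E_{\fq^\infty} \times E_{{\fq^*}^\infty}$ forces $\tilde R \equiv 0$ on $E_{\fq^m} \times E_{{\fq^*}^n}$, and once one establishes that $\tilde R$ vanishes identically in $\cF(E^2)$, Lemma~\ref{prop: Lam15 4.4} concludes that each $r_j$ is a constant.

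The main technical obstacle is precisely this last upgrade, from vanishing on the finite torsion subset to identical vanishing of the rational function $\tilde R$. In the one-variable analogue, a direct degree count on the curve $E$ suffices, but in two variables the zero locus of a nonzero $\tilde R$ is a proper closed subset of the surface $E^2$ and may contain a one-dimensional component. Here I would exploit the explicit form $\Phi_j(P,P') = \alpha_j(P) + \beta_j(P')$: the polar divisor of $r_j \circ \Phi_j$ is a union of translates of $\ker \Phi_j$, so any irreducible curve in the zero set of $\tilde R$ is such a kernel-translate. Its intersection with $E_{\fq^m} \times E_{{\fq^*}^n}$ can be bounded in terms of the $\fq$- and ${\fq^*}$-adic valuations of $(a_j, b_j)$, which are fixed independent of $m,n$ once $m, n$ exceed them; summing over the boundedly many components, one obtains an upper bound for the number of vanishing points that, for sufficiently large $m+n$, is strictly smaller than $|E_{\fq^m} \times E_{{\fq^*}^n}| = q^{m+n}$. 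This contradiction forces $\tilde R = 0$ and completes the reduction to Lemma~\ref{prop: Lam15 4.4}.
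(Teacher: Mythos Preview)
Your approach is genuinely different from the paper's, and the step you yourself flag as the main obstacle contains a real gap.

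The paper does not approximate at all. Instead it chooses a free $\cO_K$-submodule $A\subset \cO_\fq\times\cO_{{\fq^*}}$ of some rank $n$ containing every $\underline{\eta}_j$, fixes an $\cO_K$-basis $\underline{\varepsilon}_1,\ldots,\underline{\varepsilon}_n$ of $A$, and writes $\underline{\eta}_j=\sum_i\alpha_{ij}\underline{\varepsilon}_i$ with $\alpha_{ij}\in\cO_K$ \emph{exactly}. The map $\iota:E_{q^\infty}\to E^n$, $Q\mapsto(\underline{\varepsilon}_iQ)_i$, then satisfies $\underline{\eta}_j(Q)=\Phi_j(\iota(Q))$ for the genuine morphisms $\Phi_j(P_1,\ldots,P_n)=\sum_i\alpha_{ij}P_i$. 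Linear independence of the $\underline{\varepsilon}_i$ over $\cO_K$ together with the classification of algebraic subgroups of $E^n$ forces $\iota(E_{q^\infty})$ to be Zariski dense in $E^n$, so $\sum_j r_j\circ\Phi_j$ vanishes identically and Lemma~\ref{prop: Lam15 4.4} applies. The passage from ``vanishes on torsion'' to ``vanishes identically'' is thus a one-line density statement, with no counting.

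In your argument the upgrade step fails as written. Your claim that every irreducible component of the zero locus of $\tilde R=\sum_j r_j\circ\Phi_j$ is a translate of some $\ker\Phi_j$ is incorrect: that description holds for the \emph{polar} divisor of each individual summand, but the zero divisor of a sum is merely linearly equivalent to the total polar divisor and can be an arbitrary curve in that linear system. Already the toy example $\frac{1}{x}+\frac{1}{y}=\frac{x+y}{xy}$ on $\mathbb{P}^1\times\mathbb{P}^1$ vanishes on a diagonal, not on a fibre. Without that structural constraint there is no reason the intersection of the zero locus with $E_{\fq^m}\times E_{{\fq^*}^n}$ should be small, and since your approximants $\alpha_j,\beta_j$ necessarily have norms growing with $m,n$, the relevant intersection numbers grow at the same rate as $q^{m+n}$, so the count does not close. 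An approximation strategy \emph{can} be made to work, but it requires a Diophantine refinement of the type in Lemma~\ref{lem:approx} (controlling norms by $c\cdot q^{2n}$ with $c<1$ after twisting by a unit) rather than a bare CRT lift; the paper reserves that machinery for the harder finite-field statement (Theorem~\ref{thm:Hida-Lam}) and handles the present theorem by the cleaner module-basis route above.
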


\begin{proof}
We recall that $\End(E_{\fq^\infty})\simeq \cO_{\fq}$, $\End(E_{{\fq^*}^\infty})\simeq \cO_{{\fq^*}}$ and $\End_{\cF}(E)\simeq \cO_K$.
Consider a free $\cO_K$ submodule $A$ of $\cO_{\fq}\times\cO_{{\fq^*}}$ of rank $n$ that contains $\underline{\eta}_j$ for $1\leq j\leq s$.
Let $\{\underline{\varepsilon}_{i}\}_{i=1}^n$ be an $\cO_K$-basis of $A$.
Then, there exist unique $\alpha_{ij}\in \cO_K$ such that
\[
\underline{\eta}_j = \sum_{i=1}^n \alpha_{ij}\underline{\varepsilon}_i.
\]
Define the map 
\[
\iota: E_{\fq^\infty}\times E_{{\fq^*}^\infty}= E_{q^\infty}\rightarrow E^{n}\textrm{ given by } Q \mapsto \left(\underline{\varepsilon}_i Q \right)_{i=1}^n.\]
For each $1\leq j \leq s$, denote the morphism
\[
\Phi_j : E^{n} \rightarrow E; \qquad \left( P_i\right)_{i=1}^n \mapsto \sum_{i=1}^n \alpha_{ij}P_i.
\]
%where $P_i \in \iota(E_{\fq^\infty}\times E_{{\fq^*}^\infty})$.
We have assumed that
\[
\sum_{j=1}^s r_j \circ \Phi_j(\mathcal{Q}) =0 \qquad \textrm{for all }\mathcal{Q}\in \iota(E_{\fq^\infty}\times E_{{\fq^*}^\infty})\subseteq E^n.
\]
Hence, the above equality must hold for all $\mathcal{Q}$ in the Zariski closure of $\iota(E_{\fq^\infty}\times E_{{\fq^*}^\infty})$.
It follows from basic facts about Zariski closed subgroups of $E^{n}$ (see \cite[Lemmas~1 and 3]{Sch87}) that either the Zariski closure of $\iota(E_{q^\infty})$ is $E^{n}$ or there exist $\alpha_i \in \cO_K$ (not all zero) such that
\[
\sum_{i=1}^n \alpha_i \underline{\varepsilon}_i (Q) = 0 \textrm{ for all }Q\in E_{q^\infty}.
\]
If the latter holds, it means that $\sum_{i=1}^n \alpha_i \underline{\varepsilon}_i =0$.
However, this contradicts the fact that $\underline{\varepsilon}_1, \ldots ,\underline{\varepsilon}_n$ is a basis for $A$.
Thus, the Zariski closure of $\iota(E_{q^\infty})$ is $E^{n}$.
Lemma~\ref{prop: Lam15 4.4} implies that each $r_i$ is a constant function.
\end{proof}

To prove the main result in this section, we need a strengthened version of Theorem~\ref{thm: Lamplugh 4.5}.
This is achieved by combining the following Diophantine approximation result (Lemma~\ref{lem:approx}) with a special case of a lemma due to Hida (Lemma~\ref{lem:Hida}), which we record below.

\begin{Lemma}
\label{lem:approx}
Given $\underline{\beta}_1, \ldots, \underline{\beta}_d\in\cO_{\fq}\times \cO_{{\fq^*}}$ for any integer $d\geq 1$, and a positive constant $c\leq 1$, there exists an integer $N$ such that for all $n\geq N$, there exist algebraic integers $b_1, \ldots, b_d\in \cO_K$ and a unit $u \in \cO_{\fq}^{\times}\times \cO_{{\fq^*}}^\times$ satisfying
\begin{align*}
 v_{\fp} (u \underline{\beta}_i - b_i)&\geq n \textrm{ for }\fp\in \{\fq, {\fq^*}\} \textrm{ and}\\
 N_{K/\QQ}(b_i) &< c \cdot q^{2n}.
\end{align*}
\end{Lemma}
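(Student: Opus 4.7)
The plan is to reduce the problem by the Chinese Remainder Theorem to a question about residue classes in $\cO_K/q^n\cO_K$, and then combine a geometry-of-numbers estimate on the lattice $\cO_K\hookrightarrow\CC$ with a pigeonhole/character-sum argument over the local units.

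\emph{Step 1 (CRT reduction).} Since $q\cO_K=\fq\fq^*$ with $\fq\ne\fq^*$, the Chinese Remainder Theorem yields an isomorphism $\cO_K/q^n\cO_K \cong \cO_\fq/\fq^n\cO_\fq \times \cO_{\fq^*}/\fq^{*n}\cO_{\fq^*}$. Under it, the two local conditions $v_\fq(u_1\beta_{i,1}-b_i)\ge n$ and $v_{\fq^*}(u_2\beta_{i,2}-b_i)\ge n$ collapse to the single congruence $b_i\equiv\gamma_i(u)\pmod{q^n\cO_K}$, where $\gamma_i(u)\in\cO_K/q^n\cO_K$ corresponds to $(u_1\beta_{i,1}\bmod\fq^n,\, u_2\beta_{i,2}\bmod\fq^{*n})$. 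The reduction of $u$ gives a unit $\tilde u\in(\cO_K/q^n\cO_K)^\times$ with $\gamma_i(u)=\tilde u\,\tilde\beta_i$, where $\tilde\beta_i$ denotes the image of $\underline\beta_i$. The lemma thus reduces to producing such $\tilde u$ so that every class $\tilde u\tilde\beta_i$ admits an $\cO_K$-representative of norm strictly less than $cq^{2n}$.

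\emph{Step 2 (counting small-norm classes).} Embed $\cO_K\hookrightarrow\CC$ as a lattice of covolume $\tfrac12\sqrt{|d_K|}$. Lattice-point counting inside the disk $\{z\in\CC : |z|^2<cq^{2n}\}$ of area $\pi c q^{2n}$ gives $\sim\frac{2\pi c}{\sqrt{|d_K|}}q^{2n}$ elements of $\cO_K$ of norm less than $cq^{2n}$. Hence the ``good set''
\[
\mathcal G_n=\bigl\{\gamma\in\cO_K/q^n\cO_K : \exists\, b\equiv\gamma\pmod{q^n\cO_K},\ N_{K/\QQ}(b)<cq^{2n}\bigr\}
\]
has density at least some $\delta=\delta(c,K)>0$ in $\cO_K/q^n\cO_K$ for all $n$ large.

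\emph{Step 3 (pigeonhole over units).} The joint map
\[
\Phi\colon (\cO_K/q^n\cO_K)^\times\longrightarrow (\cO_K/q^n\cO_K)^d,\qquad \tilde u\mapsto(\tilde u\tilde\beta_i)_{i=1}^d
\]
is a group homomorphism whose image is a coset of a subgroup of size $\sim q^{2n}$. Expanding the indicator of $\mathcal G_n^d$ in characters of $(\cO_K/q^n\cO_K)^d$ and restricting along $\Phi$, orthogonality on $(\cO_K/q^n\cO_K)^\times$ gives
\[
\#\{\tilde u : \Phi(\tilde u)\in\mathcal G_n^d\}=\delta^d\cdot \bigl|(\cO_K/q^n\cO_K)^\times\bigr|+O(\text{error}),
\]
and for $n$ large enough in terms of $c$, $d$, and $K$, the error is negligible, so some $\tilde u$ exists, from which valid $u$ and $b_i$ may be lifted.

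The \emph{main obstacle} is Step 3: the image of $\Phi$ is only a single coset of size $\sim q^{2n}$ inside the much larger group $(\cO_K/q^n\cO_K)^d$ of size $q^{2nd}$, and the classes $\tilde u\tilde\beta_i$ are tightly correlated (they share the common factor $\tilde u$), so a naive union bound on the complementary events $\{\tilde u\tilde\beta_i\notin\mathcal G_n\}$ fails. To circumvent this, one would first strip off the $\fq$- and $\fq^*$-adic valuations of each $\underline\beta_i$ to reduce to the case where the $\tilde\beta_i$ are units of $\cO_K/q^n\cO_K$, then control the relevant Gauss-sum-type quantities via standard character-sum estimates on the unit group, yielding the required effective density.
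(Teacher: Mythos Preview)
The paper does not prove this lemma; it simply refers to \cite[Lemma~2.3.9]{Lam-thesis}, so there is no in-paper argument to compare against directly. That said, your proposal is not a complete proof, and you are candid about this: Step~3 is only a heuristic, and the ``standard character-sum estimates'' you invoke are not actually standard. After reducing to the case where each $\tilde\beta_i$ is a unit, what you need is nontrivial cancellation in sums of the shape $\sum_{b\in\cO_K,\ N(b)<cq^{2n},\ (b,q)=1}\chi(b)$ for every nontrivial multiplicative character $\chi$ of $(\cO_K/q^n)^\times$. These are short character sums over lattice points in a disk, and obtaining enough saving to beat the main term $\delta^d\,|(\cO_K/q^n)^\times|$ is a genuine analytic problem (of Burgess or P\'olya--Vinogradov type over $\cO_K$) which you have not supplied. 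The gap is exactly where you locate it, and it is not closed by a one-line citation.

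The route taken in the cited reference (and in Sinnott's original setting) avoids harmonic analysis entirely via geometry of numbers. One works with the lattice
\[
\Lambda_n=\bigl\{(a,b_1,\dots,b_d)\in\cO_K^{d+1}:b_i\equiv a\,\underline\beta_i\pmod{q^n}\text{ for all }i\bigr\}\subset\CC^{d+1},
\]
which has covolume $\asymp q^{2nd}$, and applies Minkowski's theorem to the box $\{|z_0|\le R_0,\ |z_i|\le\sqrt{c_1}\,q^n\ (i\ge1)\}$, where $R_0$ depends only on $c_1,d,K$. This yields a nonzero $(a,b_1,\dots,b_d)\in\Lambda_n$; the choice $c_1<1$ forces $a\ne0$, so $a$ belongs to the \emph{fixed finite set} $\{0\ne a\in\cO_K:|a|\le R_0\}$. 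Taking $u$ to be the image of $a$ in $\cO_\fq^\times\times\cO_{\fq^*}^\times$ (after stripping any bounded $\fq$- and $\fq^*$-powers from $a$ and the $b_i$, a step whose cost is absorbed into the choice of $N$) finishes the argument with no equidistribution input at all. Your Steps~1--2 are compatible with this, but your Step~3 replaces a single application of Minkowski by an analytic programme that is both harder and left unexecuted.
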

\begin{proof}
See {\cite[Lemma~2.3.9]{Lam-thesis}}.
\end{proof}

\begin{Lemma}
\label{lem:Hida}
Let $r$ be a positive integer.
Let $X=\bigcup_{i=1}^k X_i$ be a proper subset of $\GG_{m/\overline\QQ_q}^2$ such that
\begin{itemize}
 \item[(i)] $X$ is Zariski closed.
 \item[(ii)]For each $i$, there exists a closed subscheme $Y_i$ that is stable under $t\mapsto t^{p^{rn}}$ for all $n\in\ZZ$, such that $X_i=\underline{\zeta}Y_i$ for certain $\underline{\zeta}\in\mu_{q^\infty}^2$;
\end{itemize}
There exists $P$, which is a $p^r$-power, and an infinite sequence of integers $0<n_1<n_2<\cdots$ such that for all $j\ge1$,
\[
\Xi_j\cap X=\emptyset,
\]
where $\Xi_j$ is defined by
\[
\left\{\left(\frac{P^x}{q^{n_j}},\frac{P^y}{q^{n_j}}\right)\mod\Zq^2:x,y\in\ZZ\right\}\subset (\QQ_q/\Zq)^2
\]
after identifying $\mu_{q^\infty}^2$ with $(\QQ_q/\Zq)^2$ under an appropriate choice of basis.
\end{Lemma}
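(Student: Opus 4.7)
The statement is a two-variable specialization of a density lemma due to Hida \cite{hida1}, and my plan is to verify that our hypotheses fit into his framework and then reduce to his result. First I would fix a compatible system of $q^n$-th roots of unity $\{\zeta_{q^n}\}$, which gives an additive identification $\mu_{q^\infty}^2 \simeq (\QQ_q/\Zq)^2$. Under this identification, the map $t\mapsto t^{p^{rn}}$ on each factor of $\GG_m$ becomes multiplication by $p^{rn}$ on $\QQ_q/\Zq$, so the stability hypothesis on each $Y_i$ becomes invariance under the subgroup $\langle p^r\rangle$ of $\Zq^\times$ acting coordinatewise. The set $\Xi_j$ becomes a rectangular grid $H_j\times H_j$ inside $\mu_{q^{n_j}}^2$, where $H_j$ is the cyclic subgroup generated by $\zeta_{q^{n_j}}^P$ (this uses $\gcd(P,q)=1$, which holds automatically since $P$ will be chosen as a $p$-power).

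Next I would decompose $X=\bigcup_{i=1}^k \underline{\zeta}_i Y_i$ into its translated components. For any fixed $P$ and for $n_j$ large enough that each translation root $\underline{\zeta}_i$ lies in $\mu_{q^{n_j}}^2$, the condition $\Xi_j\cap \underline{\zeta}_i Y_i=\emptyset$ translates additively into the assertion that a shifted rectangular grid misses the $q^{n_j}$-torsion of $Y_i$. Since each $Y_i$ is a proper Zariski closed subset of $\GG_m^2$, standard torsion-point bounds (e.g.\ from the Laurent--Bombieri--Zannier theory of torsion in subvarieties of tori) show that its $q^{n_j}$-torsion grows strictly more slowly than $q^{2n_j}$, whereas the grid has cardinality equal to the square of the order of $P$ in $(\ZZ/q^{n_j}\ZZ)^\times$. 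Choosing $P$ so that this order grows quickly along an infinite subsequence of $n_j$ and applying a counting/pigeonhole argument yields, for each individual $Y_i$, infinitely many $n_j$ for which $\Xi_{n_j}$ avoids $\underline{\zeta}_i Y_i$.

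The main obstacle, and the reason the lemma is genuinely non-trivial, is the \emph{simultaneous} choice of a single $P$ and one infinite sequence $n_j$ that works for all the components $X_i$ at once. For a single component this follows from the counting bound above; for finitely many components one must uniformly intersect the respective sets of good levels, which requires a torsion bound uniform across the $Y_i$'s together with a diagonal extraction. This uniform statement is precisely what Hida proves in \cite{hida1}, and our claim follows from his by the additive translation described above. The verification that each translated-and-stabilized component $X_i=\underline{\zeta}_i Y_i$ falls within the class of subsets Hida considers — namely, translates by elements of $\mu_{q^\infty}^2$ of closed subvarieties stable under $t\mapsto t^{p^{rn}}$ — is routine once the identification $\mu_{q^\infty}^2\simeq (\QQ_q/\Zq)^2$ has been set up.
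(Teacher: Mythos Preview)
Your proposal is correct and takes essentially the same approach as the paper: both ultimately defer to Hida's density lemma \cite[Lemma~3.4]{hida1}, and the paper's own proof consists of nothing more than that citation. Your additional explanation of why the hypotheses match Hida's framework is helpful context (modulo the minor slip that $\Xi_j$ is an orbit $\{\zeta_{q^{n_j}}^{P^x}:x\in\ZZ\}^2$ under $\langle P\rangle\subset(\ZZ/q^{n_j})^\times$, not a subgroup generated by $\zeta_{q^{n_j}}^{P}$), but the statement as formulated here already coincides with the cited lemma, so no reduction is strictly required.
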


\begin{proof}
See {\cite[Lemma~3.4]{hida1}}.
\end{proof}
\begin{rem}\label{rk:Hida}On studying he proof of the above lemma, we see that $P\equiv 1 \mod q$.
If we write $P=1+q^vu$, where $q\nmid u$, then $\abs{\Xi_j}=q^{2(n_j-v)}$.\end{rem}

\begin{Th}\label{thm:Hida-Lam}
Let $\FF$ be a finite field.
Suppose that $\underline{\eta}_1, \ldots, \underline{\eta}_s \in \End(E_{\fq^\infty})\times \End(E_{{\fq^*}^\infty})$ are such that the only relation of the kind $\alpha\underline{\eta}_k = \beta\underline{\eta}_{\ell}$ for $k\neq \ell$ and $\alpha, \beta\in \End_{\FF}(E)$ is when $\alpha=\beta=0$.
Consider the function
\[
R = \sum_{i=1}^s r_i \circ \underline{\eta}_i: E_{\fq^\infty}\times E_{{\fq^*}^\infty}\rightarrow \overline{\FF}
\]
where $r_i \in \FF(E)$.
We identify $E_{q^\infty}$ with $\mu_{q^\infty}^2\subset \GG_{m/\overline\QQ_q}^2$.
Then either $\{Q\in E_{q^\infty}:R(Q)\ne 0\}$ is Zariski dense in $\GG_{m/\overline\QQ_q}^2$ or $R$ is identically zero.
In the latter case, all $r_i$'s are constant functions.
\end{Th}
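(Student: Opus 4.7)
The plan is to argue by contradiction, generalizing the strategy of \cite[Theorem~4.9]{Lam15} to the two-variable setting. Suppose $R\not\equiv 0$ on $E_{q^\infty}$ while $S:=\{Q\in E_{q^\infty}:R(Q)\ne 0\}$ is not Zariski dense in $\GG_{m/\overline\QQ_q}^2$; let $X\subsetneq\GG_m^2$ be its Zariski closure.

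The first step is to fit $X$ into the framework of Hida's density lemma (Lemma~\ref{lem:Hida}), where the finite-field hypothesis on $\FF$ becomes essential. Since $r_i\in\FF(E)$ has coefficients in $\FF$, of characteristic $p_0$ say, the function $R$ is equivariant under the Frobenius of $\FF$, and by CM theory this Frobenius acts on $E_{q^\infty}$ through an element of $\End_\FF(E)\simeq\cO_K$. Transporting this through $\delta$, and passing to a suitable iterate so that the Frobenius coincides with multiplication by an integer $P$ that is a $p_0^r$-power, the stability of $S$ under Frobenius becomes stability of $X$ under $(x,y)\mapsto(x^P,y^P)$ on $\GG_m^2$. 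Decomposing $X$ into irreducible components writes it as a finite union of translates $\underline\zeta Y_i$ of $P$-stable subvarieties $Y_i$, fulfilling hypothesis~(ii) of Lemma~\ref{lem:Hida}. The lemma then produces an increasing sequence $n_j$ with $\Xi_j\cap X=\emptyset$, so $R$ vanishes identically on $\Xi_j\subset E_{q^{n_j}}$; by Remark~\ref{rk:Hida}, $|\Xi_j|=q^{2(n_j-v)}$.

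Next, invoke Lemma~\ref{lem:approx} with $\underline\beta_i=\underline\eta_i$, $n=n_j$, and a small constant $c>0$ to be chosen, obtaining $u_j\in(\cO_\fq\times\cO_{{\fq^*}})^\times$ and $b_{i,j}\in\cO_K$ with $u_j\underline\eta_i\equiv b_{i,j}\pmod{\fp^{n_j}}$ for $\fp\in\{\fq,{\fq^*}\}$ and $N_{K/\QQ}(b_{i,j})<cq^{2n_j}$. On $q^{n_j}$-torsion the two sides agree, so $\tilde R_j:=\sum_ir_i\circ b_{i,j}$, now a genuine rational function on $E$ via $\cO_K\simeq\End_\FF(E)$, vanishes on $u_j\Xi_j$. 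Its degree is at most $\sum_i\deg(r_i)\cdot N(b_{i,j})<sD_0\,cq^{2n_j}$ with $D_0=\max_i\deg r_i$. Choosing $c<q^{-2v}/(sD_0)$ makes this strictly smaller than $|u_j\Xi_j|=q^{2(n_j-v)}$, forcing $\tilde R_j\equiv 0$ on $E$.

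Finally, adapt the proof of Theorem~\ref{thm: Lamplugh 4.5}: fix a free $\cO_K$-submodule $A\supset\{\underline\eta_i\}$ of rank $n$ with basis $\underline\varepsilon_k$ (necessarily $n=2$, since the independence hypothesis on the $\underline\eta_i$ precludes rank $1$), write $\underline\eta_i=\sum_k\alpha_{ki}\underline\varepsilon_k$, and choose $\hat\varepsilon_{k,j}\in\cO_K$ approximating $u_j\underline\varepsilon_k$ so that $b_{i,j}=\sum_k\alpha_{ki}\hat\varepsilon_{k,j}$. Setting $\iota_j(Q)=(\hat\varepsilon_{k,j}Q)_k$ and $\Phi_i(P_\bullet)=\sum_k\alpha_{ki}P_k$, the vanishing $\tilde R_j\equiv 0$ on $E$ rewrites as $\sum_ir_i\circ\Phi_i\equiv 0$ on the curve $C_j:=\iota_j(E)\subset E^2$. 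The $\cO_K$-linear independence of the $\underline\varepsilon_k$ combined with Lemma~\ref{lem:approx} ensures that the ratios $\hat\varepsilon_{1,j}/\hat\varepsilon_{2,j}\in K$ cannot be eventually constant, as they $\fq$-adically converge to $\underline\varepsilon_1/\underline\varepsilon_2\notin K$; thus the $C_j$ take infinitely many distinct values and $\bigcup_jC_j$ is Zariski dense in $E^2$. Hence $\sum_ir_i\circ\Phi_i\equiv 0$ on all of $E^2$, and the pairwise non-proportionality of the $\Phi_i$ inherited from the $\underline\eta_i$ allows Lemma~\ref{prop: Lam15 4.4} to force each $r_i$ to be constant, whence $R$ is constant. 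Since $R$ vanishes on $\Xi_1\ne\emptyset$, we conclude $R\equiv 0$, contradicting the assumption. The main obstacle will be the Frobenius step, namely carefully translating the $\FF$-equivariance of $R$ into a torus power-map stability of the precise form required by Lemma~\ref{lem:Hida}; the Zariski-density argument for $\bigcup_jC_j$ is a secondary delicate point.
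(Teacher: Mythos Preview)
Your setup through the degree-counting step---Frobenius stability feeding into Lemma~\ref{lem:Hida}, then Lemma~\ref{lem:approx} and the bound $\deg\tilde R_j<|\Xi_j|$ forcing $\tilde R_j\equiv 0$ on $E$---matches the paper's proof essentially line for line. The divergence is in what you do afterward: you miss the one-line conclusion and instead launch an auxiliary argument on $E^n$.

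Here is the shortcut you overlook. Once $\tilde R_j\equiv 0$ as a rational function on $E$, recall that by construction $\tilde R_j$ agrees with $R\circ u_j$ on $E_{q^{n_j}}$ (this is exactly how you obtained its vanishing on the translate of $\Xi_j$). Hence $R\circ u_j$, and therefore $R$ itself, vanishes on \emph{all} of $E_{q^{n_j}}$. Since $n_j\to\infty$, this forces $R\equiv 0$ on $E_{q^\infty}$, the desired contradiction. The ``latter case'' of the theorem ($R\equiv 0\Rightarrow$ each $r_i$ constant) is then nothing but Theorem~\ref{thm: Lamplugh 4.5} applied verbatim; this is precisely how the paper finishes.

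Your ``Finally'' paragraph is therefore unnecessary, and it also has genuine gaps. First, you invoke Lemma~\ref{lem:approx} on the $\underline\eta_i$ to produce $(u_j,b_{i,j})$, and then separately posit $\hat\varepsilon_{k,j}$ approximating $u_j\underline\varepsilon_k$ with $b_{i,j}=\sum_k\alpha_{ki}\hat\varepsilon_{k,j}$; nothing guarantees these two approximations are compatible. (The repair---apply the lemma to the basis vectors $\underline\varepsilon_k$ and \emph{define} $b_{i,j}$ from them, recovering the norm bound via $|N_{K/\QQ}(\cdot)|=|\cdot|^2$---is routine, but must be said.) More seriously, the parenthetical ``necessarily $n=2$'' is false: $\cO_\fq\times\cO_{\fq^*}$ has infinite rank as an $\cO_K$-module, so for $s\ge 3$ the submodule $A$ spanned by the $\underline\eta_i$ can have rank $n\ge 3$. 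In that regime $\iota_j(E)\subset E^n$ is a curve in an abelian variety of dimension $\ge 3$, and your ``infinitely many distinct ratios'' argument, which is tailored to $n=2$, no longer yields Zariski density of $\bigcup_j C_j$.
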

\begin{proof}
Suppose that $\{Q\in E_{q^\infty}:R(Q)\ne 0\}$ is not Zariski dense and that $R$ is not identically zero.
We take $P$ to be a large enough $p$-power so that $R$ is defined over $\FF_P$ (the finite field of cardinality $P)$.
Let $X$ be the Zariski closure of $\{Q\in E_{q^\infty}:R(Q)\ne 0\}$ in $\GG_{m/\overline\QQ_q}^2$.
Then, $X$ is a proper subset of $\GG_{m/\overline\QQ_q}^2$ and $X^P\subseteq X$.

Let $\log_q:\widehat\GG_{m/\overline\QQ_q}^2\rightarrow \widehat\GG_{a/\overline\QQ_q}^2$ be the $q$-adic logarithm map.
We decompose $\log_q(X)$ into a finite union of closed subsets, each of which is stable under the multiplication by $P$.
This allows us to write $X$ as a finite union of closed subschemes $X_i$ of the form $\underline\zeta Y_i$, where $\underline\zeta \in \mu_{q^\infty}^2$ and $Y_i$ is stable under $t\mapsto t^P$.
Therefore, Lemma~\ref{lem:Hida} applies.
In particular, there exists a sequence of integers $0<n_1<n_2<\cdots$ and a collection of subsets $\Xi_j$ of $q^{n_j}$-torsion points in $E_{q^\infty}$ on which $R$ vanishes, with $\abs{\Xi_j}=q^{2(n_j-v)}$ for some fixed integer $v$.

Define 
\[
\delta := \max_{1\leq i \leq s}\deg (r_i).
\]
We apply Lemma~\ref{lem:approx} to $\underline{\eta}_1, \ldots, \underline{\eta}_s$ and $c =\displaystyle \frac{1}{q^{2v} \cdot s \cdot \delta }$.
There exists an integer $N$ such that for all $n_j\geq N$, there are algebraic integers $b_1, \ldots, b_s \in \cO_K$ and $u\in \cO_{\fq}^\times \times \cO_{{\fq^*}}^\times$ (depending on $n_j$) satisfying
\begin{align*}
 v_{\fp} (u \underline{\eta}_i - b_i)&\geq n_j \textrm{ for }\fp \in \{\fq, {\fq^*}\} \textrm{ and}\\
 \abs{ N_{K/\QQ}(b_i)} &< c \cdot q^{2n_j}.
\end{align*}
In particular, the rational function
\[
R_{n_j} := \sum_{i=1}^s r_i \circ b_i \in \FF(E)
\]
agrees with $R\circ u$ on $E_{q^{n_j}}$.
Thus, it vanishes on $\Xi_j$.
Moreover,
\[
\deg(R_{n_j})\le \sum_{i=1}^s\delta\cdot N_{K/\QQ}(b_i)<s\delta\cdot c\cdot q^{2n_j}=q^{2n_j-2v}=\abs{\Xi_j}.
\]
Therefore, $R_{n_j}=0$ and thus $R$ is zero on $E_{q^{n_j}}$.
But $n_j$ can be arbitrarily large.
This implies that $R$ is identically zero, which is a contradiction.
This concludes the first assertion of the theorem.
The last assertion follows immediately from Theorem~\ref{thm: Lamplugh 4.5}.
\end{proof}

%------------------------------------------
\section{A theorem on two-variable Gamma transforms}
%------------------------------------------
The purpose of this section is to prove a two-variable version of \cite[Theorem~5.1]{Lam15} (which in turn generalizes a result of Sinnott \cite[Theorem~3.1]{sinnott}).
Our proof utilizes crucially Theorem~\ref{thm:Hida-Lam} from the previous section.
Throughout, we use the same notation introduced in Sections~\ref{section: preliminaries} and \ref{section: distribution and measures}.

\begin{Th}\label{thm:valuation-gamma-transform}
Let $\alpha$ be an elliptic function measure for $E$ defined over $k$ on $\Zq^2$ that is supported on $(\Zq^\times)^2$, and satisfies $\alpha\circ\omega=\alpha$ for all $\omega\in\mu_K^2$.
Let $R$ denote the corresponding rational function (so that $\hat\alpha=R\circ \delta$ as in Definition~\ref{defn:ellipticmeasure}), and let $n=\ord_\pi(R)$ (as in Definition~\ref{defn:ord}).
Then for a Zariski dense set of finite-order characters $\kappa$ of $(1+q\Zq)^2$, we have
\[
\ord_\pi\left(\Gamma_\alpha(\kappa)\right)=n.
\]
\end{Th}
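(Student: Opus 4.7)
The plan is to follow the strategy of the one-variable analogue \cite[Theorem~5.1]{Lam15}, now combined with the two-variable algebraic independence result Theorem~\ref{thm:Hida-Lam} and Hida's density lemma (Lemma~\ref{lem:Hida}). I would first normalize by writing $R=\pi^n\tilde R$ with $\ord_\pi(\tilde R)=0$, whose reduction $\bar R$ is a nonzero rational function in $\bar\FF(E)$ for $\bar\FF$ the residue field of $J$. Invoking Lemma~\ref{lem:Gamma-Fourier} together with the standard fact that the Gauss sum $\tau(\kappa)$ is a $\pi$-adic unit (as $p\neq q$), the claim reduces to showing that for a Zariski dense set of $\kappa$ the character sum
\[
\bar c_\kappa := \sum_{\underline x}\kappa^{-1}(\underline x)\,\bar R\bigl(\delta(\underline\zeta^{\underline x})\bigr)\in\bar\FF
\]
is nonzero, under the standard identification of finite-order characters of $(1+q\Zq)^2$ with points of $\mu_{q^\infty}^2\subset\GG_{m/\overline\QQ_q}^2$.

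I would then argue by contradiction. Suppose the ``good'' set $G=\{\kappa:\bar c_\kappa\neq 0\}$ has Zariski closure $X\subsetneq\GG_{m/\overline\QQ_q}^2$. Three symmetries are crucial for applying Lemma~\ref{lem:Hida} to $X$: first, Lemma~\ref{lem:Gamma-sigma} shows that $G$ is stable under $\kappa\mapsto\kappa^\sigma$ for $\sigma\in\Gal(J/k)$, because the unit factor appearing there has $\pi$-adic valuation zero; second, the hypothesis $\alpha\circ\omega=\alpha$ for $\omega\in\mu_K^2$ translates, via a change of variables in the integral defining $\Gamma_\alpha$, into $\mu_K^2$-invariance of $G$; and third, choosing a $p$-power $P$ large enough that $\bar R$ is defined over $\FF_P$, the $\FF_P$-Frobenius, which acts on $E_{q^\infty}$ through an element of $\cO_K$ by the CM structure, combined with the Frobenius action on the character values yields $X^P\subseteq X$. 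This last point mirrors the setup of the proof of Theorem~\ref{thm:Hida-Lam}.

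Granting these symmetries, I would decompose $X$ into a finite union of cosets $\underline\zeta Y_i$ with each $Y_i$ stable under $t\mapsto t^P$, using the $q$-adic formal logarithm on $\widehat\GG_{m/\overline\QQ_q}^2$, and apply Lemma~\ref{lem:Hida} to obtain an infinite sequence $0<n_1<n_2<\cdots$ and finite subsets $\Xi_j\subset\mu_{q^\infty}^2\setminus X$ of size $q^{2(n_j-v)}$. Thus $\bar c_\kappa=0$ for every $\kappa\in\Xi_j$. From this point the endgame closely parallels that of Theorem~\ref{thm:Hida-Lam}: applying Lemma~\ref{lem:approx} to the $\cO_\fq\times\cO_{\fq^*}$-components implicit in $\kappa^{-1}$, I would approximate them by algebraic integers of controlled norm, converting the simultaneous vanishing of the $\bar c_\kappa$ on $\Xi_j$ into the vanishing on $\Xi_j$ of a single rational function $R_{n_j}\in\bar\FF(E)$ of degree at most $c\cdot q^{2n_j}$ for a constant $c$ we are free to choose. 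Taking $c$ small enough forces $\deg R_{n_j}<|\Xi_j|=q^{2(n_j-v)}$, so $R_{n_j}=0$, and hence $\bar R$ vanishes on $E_{q^{n_j}}$ for every $j$. Letting $j\to\infty$ yields $\bar R\equiv 0$ on the infinite set $E_{q^\infty}$, contradicting the fact that $\bar R$ is a nonzero rational function on the curve $E$. The main obstacle is this final step: correctly recasting the character-sum vanishing as a genuine polynomial identity on $E$ via $\delta$ in a form to which the Diophantine approximation degree bound applies, which is precisely where the $\mu_K^2$-invariance hypothesis will be essential for controlling the approximation error.
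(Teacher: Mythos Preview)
Your proposal has a genuine gap at precisely the point you yourself flag as ``the main obstacle.'' The quantity $\bar c_\kappa$ is a \emph{character sum} over $q^{m+n}$ torsion points of $E$, not a single evaluation of a rational function on $E$. There is no meaningful way to apply Lemma~\ref{lem:approx} ``to the $\cO_\fq\times\cO_{\fq^*}$-components implicit in $\kappa^{-1}$'': the character $\kappa$ corresponds to a \emph{point} of $\mu_{q^\infty}^2$, not an endomorphism in $\cO_\fq\times\cO_{\fq^*}$, so the Diophantine approximation lemma has nothing to bite on. Consequently there is no rational function $R_{n_j}\in\bar\FF(E)$ whose vanishing on $\Xi_j$ is equivalent to the simultaneous vanishing of the $\bar c_\kappa$, and the degree-counting endgame from Theorem~\ref{thm:Hida-Lam} cannot be invoked.

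The paper resolves exactly this difficulty via the auxiliary Lemma~\ref{lem:beta}. One first passes from $\alpha$ to $\beta=\sum_{\eta\in(\mu_{q-1}/\mu_K)^2}(\alpha\circ\eta)|_{(1+q\Zq)^2}$; the $\mu_K^2$-invariance hypothesis is used here (not for approximation error control as you suggest) to ensure $\Gamma_\alpha(\kappa)=w_K^2\,\Gamma_\beta(\kappa)$. Lemma~\ref{lem:beta} then uses the Galois action from Lemma~\ref{lem:Gamma-sigma} not merely to show $G$ is Galois-stable, but to \emph{average} $\Gamma_\beta(\kappa^\sigma)$ over $\sigma\in\Gal(k_{N-1}/k)$ and thereby isolate the single values $\hat\beta_y(\underline\zeta^{y^{-1}})$. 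These are genuine evaluations at a point $Q\in E_{q^\infty}$ of a function of the shape $\sum_\eta \tilde R_{\eta y}\circ[\eta^{-1}]$, which is exactly the form Theorem~\ref{thm:Hida-Lam} handles as a black box; there is no need to rerun Lemmas~\ref{lem:approx} and~\ref{lem:Hida} inline. Finally, Theorem~\ref{thm:Hida-Lam} forces each $\tilde R_{\eta y}$ to be constant, and the hypothesis that $\alpha$ is supported on $(\Zq^\times)^2$ (which you do not use) is what converts ``constant mod $\pi$'' into ``$\ord_\pi(\alpha_{\eta y})>0$'', yielding the contradiction with $\ord_\pi(R)=0$.
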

\begin{rem}\label{rk:Zar}
We view $\Hom(\Zq^2,\mu_{q^\infty})$ as a subset of $\GG_{m/\overline\QQ_q}^2$ by sending $\kappa$ to $(\kappa(1,0),\kappa(0,1))$.
A set of finite-order characters is called \emph{Zariski dense} if its image in $\GG_{m/\overline\QQ_q}^2$ is a dense subset under the Zariski topology.
\end{rem}

The following lemma is a key technical ingredient of the proof of Theorem~\ref{thm:valuation-gamma-transform}.

\begin{Lemma}
\label{lem:beta}
Let $\alpha$ be an elliptic function measure as in the statement of Theorem~\ref{thm:valuation-gamma-transform}.
Define
\[
\beta=\sum_\eta \left(\alpha\circ \eta\right)|_{(1+q\Zq)^2},
\]
where $\eta$ runs over a set of representatives for $(\mu_{q-1}/\mu_K)^2$ and $\alpha\circ\eta$ is defined as in Definition~\ref{def:dist}(i).
For each $y=(y_1,y_2){\in \mu_{q-1}^2}$, we write
\[
\beta_{y}=\beta|_{y_1(1+q^M\Zq)\times y_2(1+q^M\Zq)},
\]
where $M\ge1$ is the integer such that $\mu_{q^\infty}\cap k= \mu_{q^M}$.
Let $\kappa=(\kappa_1,\kappa_2)$ be a finite-order character of $(1+q\Zq)^2$.
Suppose that there exist integers $m,n\ge M$ satisfying
\[
\ker(\kappa_1)=1+q^{m+M}\Zq,\quad \ker(\kappa_2)=1+q^{n+M}\Zq.
\]
Let $\underline\zeta=(\zeta_1,\zeta_2)\in \mu_{q^\infty}$ such that 
\[
\zeta_1^{q^m}=\kappa_1(1+q^m),\quad \zeta_2^{q^n}=\kappa_2(1+q^n).
\]
Then $\Gamma_\beta(\kappa)\in\pi\fO$ if and only if $\hat\beta_{y}(\underline \zeta^{y^{-1}})\in \pi\fO$ for all $y\in \mu_{q-1}^2$.
\end{Lemma}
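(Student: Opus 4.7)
The strategy is to expand $\Gamma_\beta(\kappa)$ via Lemma~\ref{lem:Gamma-Fourier}, insert the decomposition $\beta=\sum_y\beta_y$, and use orthogonality to collapse the resulting double sum into an expression that cleanly isolates each $\hat\beta_y(\underline\zeta^{y^{-1}})$.

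I would first extend $\kappa$ by zero to all of $\Zq^2$ and apply Lemma~\ref{lem:Gamma-Fourier}. Using the primitivity of $\kappa$ on $(1+q\Zq)^2$ with respect to $\ker(\kappa)=(1+q^{m+M}\Zq)\times(1+q^{n+M}\Zq)$, one verifies by direct computation that the Gauss sum $\tau(\kappa)$ lies in $\fO^\times$, so
\[
\Gamma_\beta(\kappa)=\tau(\kappa)\sum_{\underline x}\kappa^{-1}(\underline x)\,\hat\beta(\underline\zeta^{\underline x}),
\]
where $\underline x$ runs over the appropriate index set supplied by that lemma.

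Next, substituting $\hat\beta=\sum_{y\in\mu_{q-1}^2}\hat\beta_y$ and using that $\beta_y$ is supported on the translate $y_1(1+q^M\Zq)\times y_2(1+q^M\Zq)$, the Fourier transform splits as $\hat\beta_y(\underline\zeta^{\underline x})=\underline\zeta^{\underline x\cdot y}\,\widehat{\beta_y\circ y}(\underline\zeta^{\underline x})$, where $\beta_y\circ y$ is supported on $(1+q^M\Zq)^2$. The factor $\widehat{\beta_y\circ y}(\underline\zeta^{\underline x})$ depends on $\underline\zeta^{\underline x}$ only modulo the kernel of this Fourier transform (which contains the $q^M$-th roots of unity). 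The inner sum $\sum_{\underline x}\kappa^{-1}(\underline x)\,\underline\zeta^{\underline x\cdot y}$ then collapses by orthogonality on the relevant cyclic quotient, and using the compatibility $\zeta_i^{q^{m_i}}=\kappa_i(1+q^{m_i})$, one picks out precisely the single value $\underline x$ with $\underline\zeta^{\underline x}=\underline\zeta^{y^{-1}}$.

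The upshot is an identity of the form $\Gamma_\beta(\kappa)=u\sum_{y\in\mu_{q-1}^2}v_y\,\hat\beta_y(\underline\zeta^{y^{-1}})$ with $u,v_y\in\fO^\times$. The ``if'' direction of the lemma is then immediate; for the ``only if'' direction, the summands lie in distinct eigenspaces for the $\mu_{q-1}^2$-twist action induced by the decomposition of $\beta$, so each $\hat\beta_y(\underline\zeta^{y^{-1}})$ can be recovered from $\Gamma_\beta(\kappa)$ (and its twists by characters of $\mu_{q-1}^2$) by a finite character-sum inversion, an operation that preserves $\pi\fO$. The main obstacle I expect is the bookkeeping in the collapse step: coherently tracking the Gauss sum, the primitive-root compatibilities $\zeta_i^{q^{m_i}}=\kappa_i(1+q^{m_i})$, and the translation twist $\underline\zeta^{\,\underline x\cdot y}$, so as to arrive at the clean identity above with unit coefficients. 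A minor secondary point is to use the hypothesis $\alpha\circ\omega=\alpha$ for $\omega\in\mu_K^2$ to ensure that the definition of $\beta$ is independent of the choice of representatives for $(\mu_{q-1}/\mu_K)^2$.
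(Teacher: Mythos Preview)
Your ``only if'' direction has a real gap. You propose to recover each $\hat\beta_y(\underline\zeta^{y^{-1}})$ from $\Gamma_\beta(\kappa)$ by twisting $\kappa$ with characters of $\mu_{q-1}^2$ and performing a character-sum inversion. But $\beta$ is supported on $(1+q\Zq)^2$, where every character of $\mu_{q-1}^2$ restricts trivially; hence $\Gamma_\beta(\kappa\chi)=\Gamma_\beta(\kappa)$ for all such $\chi$, and these twists carry no information whatsoever. (The ``$y\in\mu_{q-1}^2$'' in the statement is evidently a slip for a set of representatives of $(1+q\Zq)^2/(1+q^M\Zq)^2$, as the proof makes clear; this may have misled you into looking for a $\mu_{q-1}^2$-eigenspace decomposition that does not exist.) More fundamentally, you never use the hypothesis that $\alpha$ is an elliptic function measure \emph{defined over $k$}, and without it the implication is false for a general measure: a single containment $\Gamma_\beta(\kappa)\in\pi\fO$ cannot by itself force each of finitely many contributions into $\pi\fO$.

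The paper supplies exactly the missing mechanism. By Lemma~\ref{lem:Gamma-sigma}, for every $\sigma\in\Gal(J/k)$ the quantities $\Gamma_\beta(\kappa)^\sigma$ and $\Gamma_\beta(\kappa^\sigma)$ differ by a root of unity, so $\Gamma_\beta(\kappa)\in\pi\fO$ forces $\Gamma_\beta(\kappa^\sigma)\in\pi\fO$ for \emph{all} $\sigma$. One then forms the weighted sum $\sum_{\sigma\in H}\kappa^\sigma(y)^{-1}\Gamma_\beta(\kappa^\sigma)$ over the finite quotient $H=\Gal(k_{N-1}/k)$; a trace identity collapses this to $q^{N-1}\int_{y_1(1+q^m\Zq)\times y_2(1+q^n\Zq)}\kappa(x/y)\,d\beta(x)$, and after a change of variables and a further sum over $t\in(1+q^M\Zq)^2/\bigl((1+q^m\Zq)\times(1+q^n\Zq)\bigr)$ one obtains $\hat\beta_y(\underline\zeta^{y^{-1}})\in\pi\fO$. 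Thus it is the Galois conjugates $\kappa^\sigma$, not $\mu_{q-1}^2$-twists, that separate the pieces. Your ``if'' direction via Lemma~\ref{lem:Gamma-Fourier} and $\tau(\kappa)\in\fO^\times$ is essentially the paper's; the clean single-sum identity $\Gamma_\beta(\kappa)=u\sum_y v_y\,\hat\beta_y(\underline\zeta^{y^{-1}})$ you aim for is neither needed nor obviously true as stated.
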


\begin{proof}
Suppose that $\Gamma_\beta(\kappa)\in\pi\fO$.
Let $\sigma\in \Gal(J/k)$ and $\underline\xi\in \mu_{q^\infty}^2$.
Recall from the proof of Lemma~\ref{lem:Gamma-sigma} that 
\[
\hat\alpha(\underline\xi)^\sigma=\hat\alpha(\underline\xi^{\chi_E(\sigma)}).
\]
Since Fourier transform is additive, we have equivalently
\[
\hat\beta(\underline\xi)^\sigma=\hat\beta(\underline\xi^{\chi_E(\sigma)}).
\]
Furthermore, Lemma~\ref{lem:Gamma-sigma} asserts that
\[
\Gamma_\beta(\kappa)^ \sigma=\frac{\kappa^\sigma(\chi_E(\sigma))}{\kappa^\sigma(\chi_\mu(\sigma))}\Gamma_\beta(\kappa^\sigma).
\]
Thus, $\ord_\pi\left(\Gamma_\beta(\kappa^\sigma)\right)$ is independent of $\sigma\in\Gal(J/k)$ because $\Gamma_\beta(\kappa)\in\pi\fO$ by assumption and $\kappa$ takes values in the group of roots of unity.
In particular, $\Gamma_\beta(\kappa^\sigma)\in \pi \fO$ for all $\sigma\in\Gal(J/k)$ under our hypothesis that $\Gamma_\beta(\kappa)\in\pi\fO$.

Let $N=\max(m,n)$.
Write $k_{N-1}$ to denote the $(N-1)$-th layer of the $\Zq$-extension $J/k$, and set $H=\Gal(k_{N-1}/k)$.
Let $y\in (1+q\Zq)^2$.
We have
\begin{align*}
 \sum_{\sigma\in H}\kappa^\sigma(y)^{-1}\Gamma_\beta(\kappa^\sigma)&= \sum_{\sigma\in H}\kappa^\sigma(y)^{-1}\int_{(1+q\Zq)^2}\kappa^\sigma(x) d\beta(x) \\
 &=\sum_{\sigma\in H}\int_{(1+q\Zq)^2}\kappa^\sigma(x/y)d\beta(x)\\
 &=\int_{(1+q\Zq)^2}\Tr_{k_{N-1}/k}\circ\kappa(x/y)d\beta(x) \\
 &=q^{N-1}\int_{y_1(1+q^m\Zq)\times y_2(1+q^n\Zq)}\kappa(x/y)d\beta(x) .
\end{align*}
Note that $q^{N-1}$ is a unit in $\fO$ (since $q\ne p$).
Therefore, $\Gamma_\beta(\kappa)\in\pi\fO$ implies that
\[
\int_{y_1(1+q^m\Zq)\times y_2(1+q^n\Zq)}\kappa(x/y)d\beta(x) \in\pi\fO.
\]
Let $x=(x_1,x_2)=y(1+q^mz_1,1+q^nz_2)=(y_1(1+q^mz_1),y_2(1+q^nz_2))$, where $z_1,z_2\in\Zq$.
Then 
\[
\kappa(x/y)=\kappa(1+q^mz_1,1+q^nz_2)=\kappa((1+q^m)^{z_1},(1+q^n)^{z_2})=\zeta_1^{x_1/y_1-1}\zeta_2^{x_2/y_2-1}.
\]
Thus, we deduce that
\[
\int_{y_1(1+q^m\Zq)\times y_2(1+q^n\Zq)}\zeta_1^{x_1/y_1}\zeta_2^{x_2/y_2}d\beta(x) \in\pi\fO.
\]
If we replace $y$ by $yt=(y_1t_1,y_2t_2)$ and $(\zeta_1,\zeta_2)$ by $(\zeta_1^{t_1},\zeta_2^{t_2})$ for any $t=(t_1,t_2)\in (1+q^M\Zq)^2$, the same containment holds.
Hence, summing over $t\in(1+q^M\Zq)^2/(1+q^m\Zq)\times (1+q^n\Zq) $, we deduce that
\[
\hat\beta_y(\underline\zeta^{y^{-1}})=\int_{y(1+q\Zq)^2}\zeta_1^{x_1/y_1}\zeta_2^{x_2/y_2}d\beta(x) \in\pi\fO.
\]

The converse follows from Lemma~\ref{lem:Gamma-Fourier} and the fact that the Gauss sum $\tau(\kappa)$ is a $\pi$-adic unit (which is a consequence of the fact that its conductor is coprime to $p$).
\end{proof}

\begin{proof}[Proof of Theorem~\ref{thm:valuation-gamma-transform}]
Without loss of generality, we assume that $n=\ord_\pi(R)=0$.
Let $\beta$ be as defined in the statement of Lemma~\ref{lem:beta}, and $w_K$ denote the number of elements in $\mu_K$ (which is coprime to $p>3$).
We have
\[
\frac{1}{w_K^2}\Gamma_\alpha(\kappa)=\Gamma_\beta(\kappa).
\]
Let us write 
\[
\alpha_{\eta y}=\alpha|_{\eta_1y_1(1+q^M\Zq)\times \eta_2y_2(1+q^M\Zq)}
\]
for $\eta=(\eta_1,\eta_2)\in \mu_{q-1}^2$ and $y=(y_1,y_2)\in (1+q\Zq)^2$.
Note that $\alpha_{\eta y}$ is an elliptic function measure since it is a restriction of $\alpha$.
Furthermore, we write $R_{\eta y}$ for the rational function on $E$ attached to $\alpha_{\eta y}$ (meaning that $\hat\alpha_{\eta y}= R_{\eta y}\circ\delta$ as functions on $\mu_{q^\infty}^2$).
As can be seen in the proof of Lemma~\ref{lem:EFM}, $R_{\eta y}$ takes values in $\fO$.
Let $\tilde R_{\eta y}$ denote the function $R_{\eta y}$ modulo $\pi$.

Suppose that the set of characters $\kappa$ with $\ord_\pi(\Gamma_\alpha(\kappa))= 0$ is not Zariski dense.
Note that for all $\kappa$, we have $\ord_\pi(\Gamma_\alpha(\kappa))=\ord_\pi(\Gamma_\beta(\kappa))$ by Lemma~\ref{lem:integral-elliptic-function} and the fact that $p\nmid w_K$.
Equivalently, the set of characters $\kappa$ such that $\Gamma_\beta(\kappa)\not\in \pi\fO$ is not Zariski dense.
By Lemma~\ref{lem:beta}, the set of elements $Q\in E_{q^\infty}$ such that
\[
\sum_{\eta\in(\mu_{q-1}/\mu_K)^2} \tilde R_{\eta y}([\eta^{-1}]\circ Q)\ne 0
\]
is not Zariski dense.

Applying Theorem~\ref{thm:Hida-Lam}, it follows that $\tilde R_{\eta y}$ is a constant function.
Let $c_{\eta y}$ denote a constant of $\fO$ lifting $\tilde R_{\eta y}$ and let $\delta_0$ denote the Dirac measure of $\Zq^2$ concentrated at $(0,0)$.
By definition, the Fourier transform $\hat\delta_0$ sends all $\underline{\zeta}\in \mu_{q^\infty}$ to $1$.
Therefore, the Fourier transform of $\alpha_{\eta y}-c_{\eta y}\delta_0$ takes values in $\pi\fO$.
In particular,
\[
\ord_{\pi}(\alpha_{\eta y}-c_{\eta y}\delta_0)>0.\]
However, if we restrict the measure $\alpha_{\eta y}-c_{\eta y}$ to $(\Zq^\times)^2$, it agrees with $\alpha_{\eta y}$.
Thus,
\[
\ord_{\pi}(\alpha_{\eta y})=\ord_{\pi}(\alpha_{\eta y}-c_{\eta y}\delta_0)>0.\]
This contradicts our hypothesis that $\ord_\pi(R)=0$.
\end{proof}

%------------------------------------------
\section{Proof of Theorem~\ref{thmA}}
\label{S:L}
%------------------------------------------
In this section we apply Theorem~\ref{thm:valuation-gamma-transform} to study $\pi$-adic valuations of special values of $L$-functions and prove Theorem~\ref{thmA} stated in the introduction.

\subsection{Notation on ray class fields and CM elliptic curves}

We keep the notation introduced in Section~\ref{section: preliminaries}.
Recall that $K$ is a fixed imaginary quadratic field, and $H$ is its Hilbert class field.

\begin{Defi}
Let $\fa$ be an integral ideal of $K$.
\begin{itemize}
 \item We write $\sR(\fa)$ for the ray class field of $K$ with conductor $\fa$.
 \item Given another ideal $\fb$ of $K$ which is coprime to $\fa$, we write $(\fb,\sR(\fa))\in\Gal(\sR(\fa)/K)$ for the Artin symbol of $\fb$.
 \item Given a character $\rho$ on $\Gal(\sR(\fa)/K)$, we shall write $\rho(\fb)$ and $\rho\left((\fb,\sR(\fa))\right)$ interchangeably.
\end{itemize}
\end{Defi}

Recall from \S\ref{section: preliminaries} that $E$ is an elliptic curve with complex multiplication by $\cO_K$ with good reduction at the primes above $p$ and $q$.
Let $\omega_E$ denote the N\'eron differential for $E_{/{\sR(\ff)}}$ and $\mathcal{L}=\Omega_\infty \cO_K$ be its period lattice.
Note that $\Omega_{\infty}$ is uniquely determined up to a root of unity in $K$.

Given an ideal $\fb$ of $K$ coprime to $\ff$, there exists {$\Lambda(\fb)\in \sR(\ff)^\times$ such that
\begin{equation}
 \cL_{\fb} = \Lambda(\fb)\fb^{-1}\cL
\label{eq:defnLambda}
\end{equation}
is the lattice associated with $E^{(\fb, \sR(\ff))}$, as given by \cite[(16) on p.~42]{dS87} (see also \cite[D{\'e}finition, p.~198]{GS81}).
For simplicity, we shall write $E^{(\fb)}$ for the CM elliptic curve $E^{(\fb,\sR(\ff))}$} and denote by 
\[
\lambda(\fb):E\rightarrow E^{(\fb)}
\]
the unique isogeny given by \cite[(15) on p.~42]{dS87}.

Consider the complex analytic isomorphism of complex Lie groups
\begin{equation}
\label{eqn: xi iso}
\xi_\fb : \CC/\cL_\fb \xrightarrow{\sim}E^{(\fb)}(\CC) \ \textrm{given by} \ \xi_\fb(z) = \left( \wp(z,\cL_\fb), \wp^\prime(z,\cL_\fb)\right),
\end{equation}
where $\wp$ is the Weierstrass $\wp$-function and $\wp^\prime$ is the corresponding derivative.
We have the Weierstrass equation
\begin{equation}\label{eq:weierstrass}
 y^2=4x^3-g_2(\cL_\fb)x-g_3(\cL_\fb)
\end{equation}
describing $E^{(\fb)}$.

When $\fb=\cO_K$, we shall write $\xi_1$ in place of $\xi_{\cO_K}$.
We recall the following relation:
\begin{equation}\label{eq:xi-transform}
 \xi_\fb\left(\Lambda(\fb)z\right)=\lambda(\fb)(\xi_1(z))
\end{equation}
as discussed in \cite[commutative diagram (21) on p.~43]{dS87} and \cite[Proposition~4.10]{GS81}.

\subsection{Review on \texorpdfstring{$L$}{}-functions}
\begin{Defi}
\label{imprimitive L function}
Let $\fh$ be any integral ideal of $K$.
Let $\epsilon$ be any Hecke character of $K$ with conductor dividing some power of $\fh$.
The \emph{imprimitive $L$-function} of $\epsilon$ modulo $\fh$ is defined as follows
\[
L_{\fh}(\epsilon,s) = \sum_{\gcd(\fa, \fh)=1} \frac{\epsilon(\fa)}{(N\fa)^s}.
\]
\end{Defi}
Let $\epsilon$ be a Hecke character over $K$ {of infinity type $(a,b)$}.
Denote by $L(\epsilon,s)$ the \emph{primitive Hecke $L$-function} of $\epsilon$.
Recall that the imprimitive (or partial) $L$-function differs from the primitive (or classical) $L$-function by a finite number of Euler factors.
We can further define the \emph{primitive algebraic Hecke $L$-function},
\[
L^{\alg}(\overline\epsilon) := \frac{L\left({\overline\epsilon,a+b}\right)}{{(2\pi)^b\Omega_{\infty}^{b-a}} }.
\]
If $\Psi$ and $\kappa$ are as in the statement of Theorem~\ref{thmA}, then
\[
L^{\alg}\left(\overline{\Psi\kappa}\right)=L^{\alg}\left(\overline{\varphi^{k-j}\kappa}\chi_0 N^j\right)= \frac{L\left(\overline{\varphi^{k-j}\kappa}\chi_0 N^j,k+j\right)}{(2\pi)^j\Omega_{\infty}^{k-j} }=\frac{L\left(\overline{\varphi^{k-j}\kappa}\chi_0,k\right)}{(2\pi)^j\Omega_{\infty}^{k-j} },
\]
where $\varphi$ and $\chi_0$ are given as in \S\ref{section: preliminaries}.

Henceforth, we assume that $\kappa$ is of conductor $\fq^{m+1} {\fq^*}^{n+1}$ and set $ F_{m,n}=\sR(\fh)$ with $\fh = \fg \fq^{m+1} {\fq^*}^{n+1}$.
Let $\fg$ be an auxiliary principal ideal that is divisible $\ff$ and is relatively prime to $pq$.
Then $\upsilon=\kappa\overline\chi_0$ is a character of $\Gal\left( \sR(\fh)/K\right)$.
The imprimitive $L$-function of $\overline{\upsilon \varphi^{k-j}}$ modulo $\fh$ can be written as
\[
L_{\fh}\left(\overline{\upsilon \varphi^{k-j}},s \right) = \sum_{\tau \in \Gal(\sR(\fh)/K)} \overline{\upsilon}(\tau)\sum_{(\fb,\sR(\fh))=\tau} \frac{\overline{\varphi^{k-j}}(\fb)}{(N\fb)^{s}},
\]
where the second sum runs over integral ideals $\fb$ of $\cO_K$ such that $\gcd(\fb,\fh)=1$.
We define the following partial imprimitive L-functions:
\begin{Defi}
Let $\fh$ and $\varphi$ be as above.
For $\tau\in\Gal(\sR(\fh)/K)$, we define
\[
L_{\fh}\left(\overline{\varphi^{k-j}}, s , (\fb,\sR(\fh))\right)=\sum_{\substack{\fb\unlhd\cO_K\\(\fb,\sR(\fh))=\tau\\\gcd(\fb,\fh)=1}} \frac{\overline{\varphi^{k-j}}(\fb)}{(N\fb)^{s}} .
\]
\end{Defi}
In particular, we have
\[
L_{\fh}\left(\overline{\upsilon\varphi^{k-j}},s \right) = \sum_{\tau \in \Gal(\sR(\fh)/K)} \overline{\upsilon}(\tau)L_\fh(\overline{\varphi^{k-j}}, s , \tau).
\]

\begin{rem}
The (primitive and imprimitive) $L$-functions we have discussed so far only converge on some right half-plane.
However, they admit analytic continuations to the entire complex plane.
In order to prove Theorem~\ref{thmA}, we shall relate $L^{\alg}\left(\overline{\upsilon\varphi^{k-j}}\right)$ to Gamma transforms of certain elliptic function measure that we construct in the following subsection.
\end{rem}

Let $F = \sR(\fg q)$ and write $\Delta =\Gal(F/K)$.
Since $\ff\mid \fg$, we have
\[
F = \sR(\fg q) = K\left(j(E), h(E_{\fg q})\right) = H\left(x(E_{\fg q})\right).
\]
Here $h$ denotes a Weber function and we may choose this to be the $x$-coordinates on a Weierstrass model for the elliptic curve.
Set $F_\infty=\bigcup_{n\ge1} \sR(\fg q^n)$; this is a $\ZZ_q^2$-extension of $F$.
Recall that $K_\infty$ is the $\Zq^2$-extension of $K$, we fix an isomorphism 
\[
\Gal(F_\infty/K)\simeq \Gal(F/K) \times \Gal(F_\infty/F) \simeq \Gal(F/K) \times \Gal(K_\infty/K) \simeq \Delta \times \Zq^2.
\]
By definition, $\upsilon=\kappa\overline\chi_0$ is a character of $\Gal(\sR(\ff)\cdot K_\infty/K)$, which is a quotient of $\Gal(F_\infty/K)$.
Our hypothesis that $q\nmid [\sR(\ff):K]$ allows us to regard $\kappa$ (resp. $\overline\chi_0$) as a character of $\Zq^2$ (resp. $\Delta$).
Then $\kappa$ (resp. $\upsilon$) may be regarded as a character of $\Gal(F_{m,n}/F)$ (resp. $\Delta\times \Gal(F_{m,n}/F)$).

\begin{Defi}
Given an ideal $\fc$ of $\cO_K$ that is coprime to $\fh$, let $\tau_\fc$ denote $(\fc,F_{m,n})=(\fc,\sR(\fh))$.
\end{Defi}

We conclude this subsection with the following lemma on the Galois action on partial imprimitive $L$-values.

\begin{Lemma}
\label{lem:action-on-L}
Let $\fb$ be an ideal of $\cO_K$ coprime to $\fh$ such that $(\fb,\sR(\ff))=1$.
For any $\rho\in \fh^{-1}\cL/\cL$ and any integral ideal $\fc$ of $\cO_K$ that is coprime to $\fh$, we have
\[
\tau_\fb\cdot \frac{L_{\fh}\left(\overline{\varphi^{k-j}}, k ,\tau_\fc\right)}{(2\pi)^j\rho^{k-j}}=\frac{L_{\fh}\left(\overline{\varphi^{k-j}}, k ,\tau_{\fb\fc} \right)}{(2\pi)^j\rho^{k-j}}.
\]
\end{Lemma}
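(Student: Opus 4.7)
The plan is to realize the ratio $L_\fh(\overline{\varphi^{k-j}},k,\tau_\fc)/(2\pi)^j\rho^{k-j}$ as an element of $F_{m,n}=\sR(\fh)$ via Damerell's theorem and then to compute the action of $\tau_\fb$ using the main theorem of complex multiplication for $E$. Since $\rho \in \fh^{-1}\cL/\cL$ is regarded as a fixed complex representative lying outside $F_{m,n}$, the transcendental factor $(2\pi)^j\rho^{k-j}$ is invariant under $\tau_\fb$ by convention; the actual content of the lemma is that $\tau_\fb$ carries $L_\fh(\overline{\varphi^{k-j}},k,\tau_\fc)/(2\pi)^j\rho^{k-j}$ to $L_\fh(\overline{\varphi^{k-j}},k,\tau_{\fb\fc})/(2\pi)^j\rho^{k-j}$ once both are viewed as algebraic numbers.

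First, I would invoke the Eisenstein--Kronecker expression for partial imprimitive Hecke $L$-values (following \cite[Chapter~II]{dS87}; see also \cite{GS81}) to rewrite
\[
\frac{L_{\fh}\left(\overline{\varphi^{k-j}}, k ,\tau_\fc\right)}{(2\pi)^j\rho^{k-j}}
\]
as an explicit algebraic combination of Eisenstein--Kronecker numbers attached to the lattice $\cL_\fc=\Lambda(\fc)\fc^{-1}\cL$, evaluated at the $\fh$-torsion point $\xi_\fc(\Lambda(\fc)\rho)=\lambda(\fc)(\xi_1(\rho))$ on $E^{(\fc)}$, cf.\ \eqref{eq:xi-transform}. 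The familiar cancellation of $(2\pi)^j\Omega_\infty^{k-j}$ against the transcendental periods in Damerell's theorem leaves an algebraic number in $F_{m,n}$.

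Next, I would apply the reciprocity law of CM theory. The Artin symbol $\tau_\fb$ sends $E^{(\fc)}$ to $E^{(\fb\fc)}$ via the isogeny $\lambda(\fb)\colon E^{(\fc)}\to E^{(\fb\fc)}$, transforms the uniformization by $\xi_\fc(z)\mapsto \xi_{\fb\fc}(\Lambda(\fb)z)$, and re-scales the lattice by $\cL_\fc \mapsto \Lambda(\fb)\fb^{-1}\cL_\fc=\cL_{\fb\fc}$. Tracking these through the Eisenstein--Kronecker formula from the previous step produces exactly the same expression but with $\fc$ replaced by $\fb\fc$, which by the same formula equals $L_\fh(\overline{\varphi^{k-j}},k,\tau_{\fb\fc})/(2\pi)^j\rho^{k-j}$, giving the claimed identity.

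The main obstacle will be the bookkeeping of the $\Lambda$-factors. One must verify that the $\Lambda(\fb)$ introduced by the lattice transformation and by the torsion-point transformation $\xi_\fc(\Lambda(\fc)\rho)\mapsto \xi_{\fb\fc}(\Lambda(\fb)\Lambda(\fc)\rho)=\xi_{\fb\fc}(\Lambda(\fb\fc)\rho)$ combine so as to leave the $\rho^{k-j}$ denominator formally unchanged. This is precisely where the hypothesis $(\fb,\sR(\ff))=1$ is used: it forces $E^{(\fb)}=E$, so that $\lambda(\fb)$ is an endomorphism of $E$ represented by $\Lambda(\fb)\in\sR(\ff)^\times$, and the multiplicativity $\Lambda(\fb\fc)=\Lambda(\fb)\Lambda(\fc)$ (up to $\sR(\ff)^\times$-units fixed by $\tau_\fb$, as $\fb$ acts trivially on $\sR(\ff)$) produces the clean cancellation needed to match the two sides.
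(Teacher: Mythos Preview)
Your approach is essentially the same as the paper's: express the $L$-value via Eisenstein series (de~Shalit), apply the Artin symbol, and verify that the $\Lambda$-factors match up using the hypothesis $(\fb,\sR(\ff))=1$. The paper carries this out slightly more directly by using the form
\[
\frac{L_{\fh}\left(\overline{\varphi^{k-j}}, k, \tau_\fc\right)}{(2\pi)^j\rho^{k-j}}
= \frac{(N\fh\sqrt{d_K})^{-j}\Lambda(\fc)^{k-j} }{(k-1)!\varphi(\fc)^{k-j}}\, E_{j,k}\left(\rho, \cL \right)^{\tau_\fc},
\]
in which the Galois action is already packaged as the superscript $\tau_\fc$ on a single Eisenstein value at the base lattice $\cL$; applying $\tau_\fb$ then simply replaces $\tau_\fc$ by $\tau_{\fb\fc}$, with no need to track isogenies or changes of lattice. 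Your formulation via $E_{j,k}(\Lambda(\fc)\rho,\cL_\fc)$ is equivalent (indeed $E_{j,k}(\Lambda(\fc)\rho,\cL_\fc)=E_{j,k}(\rho,\cL)^{\tau_\fc}$ by \cite[Proposition~3.3]{dS87}), but forces you to redo that translation.

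One point you should make explicit: the prefactor in the Eisenstein formula is $\Lambda(\fc)^{k-j}/\varphi(\fc)^{k-j}$, not just a power of $\Lambda(\fc)$. Matching $\fc$ with $\fb\fc$ therefore requires $\Lambda(\fb\fc)/\varphi(\fb\fc)=\Lambda(\fc)/\varphi(\fc)$. Using the cocycle relation $\Lambda(\fb\fc)=\Lambda(\fc)^{\tau_\fb}\Lambda(\fb)$ from \cite[(17), p.~42]{dS87} together with $\Lambda(\fc)\in\sR(\ff)^\times$ (hence fixed by $\tau_\fb$), this reduces to $\Lambda(\fb)=\varphi(\fb)$, which is exactly \cite[(18), p.~42]{dS87} under the hypothesis $(\fb,\sR(\ff))=1$. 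Your sketch alludes to this via ``$\lambda(\fb)$ is an endomorphism represented by $\Lambda(\fb)$'', but the identity $\Lambda(\fb)=\varphi(\fb)$ is what actually makes the cancellation go through and deserves to be stated.
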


\begin{proof}
Equation \eqref{eq:L-Eisen} in the appendix tells us that
\begin{equation}\label{eq:L-Eisen0}
\frac{L_{\fh}\left(\overline{\varphi^{k-j}}, k, \tau_\fc\right)}{(2\pi)^j\rho^{k-j}} 
= \frac{(N\fh\sqrt{d_K})^{-j}\Lambda(\fc)^{k-j} }{(k-1)!\varphi(\fc)^{k-j}} E_{j,k}\left(\rho, \cL \right)^{\tau_\fc}.
\end{equation}
Since $\tau_\fb$ acts trivially on $\sR(\ff)$, we deduce that
\begin{align*}
\tau_\fb\cdot \frac{L_{\fh}\left(\overline{\varphi^{k-j}}, k, \tau_\fc\right) }{(2\pi)^j\rho^{k-j}}&=\frac{ {(N\fh\sqrt{d_K})^{-j}}\Lambda(\fc)^{k-j}}{(k-1)!\varphi(\fc)^{k-j}} E_{j,k}\left( \rho, \cL \right)^{\tau_{\fb\fc}}.
\end{align*}
On replacing $\fc$ by $\fb\fc$ in \eqref{eq:L-Eisen0}, we have
\[ \frac{L_{\fh}\left(\overline{\varphi^{k-j}}, k, {\tau_{\fb\fc}}\right) }{(2\pi)^j\rho^{k-j}}
=\frac{(N\fh\sqrt{d_K})^{-j}\Lambda(\fb\fc)^{k-j}}{(k-1)!\varphi(\fb\fc)^{k-j}} E_{j,k}\left( \rho, \cL \right).\]

The hypothesis that $(\fb,\sR(\ff))=1$ implies that $\varphi(\fb)=\Lambda(\fb)$ by \cite[(18) on p.~42]{dS87}.
Thus, equation (17) in \emph{op. cit.} tells us that
\[
\frac{ \Lambda(\fb\fc)}{\varphi(\fb\fc)}=\frac{ \Lambda(\fc)^{\tau_{\fb}}\Lambda(\fb)}{\varphi(\fc)\varphi(\fb)}=\frac{ \Lambda(\fc)}{\varphi(\fc)}.
\]
Hence the result follows.
\end{proof}

\subsection{A rational function with a canonical divisor}
\label{S:elliptic-function}
The goal of this section is to generalize the construction of a rational function on a CM elliptic curve from \cite[Section~6.3]{Lam15}.
In order to consider Hecke characters of more general infinity-type, we introduce a new derivative operator, which did not make an appearance in \emph{loc. cit.}
This allows us to carry out step (3) outlined in the introduction.
The notation introduced in the previous section will continue to be utilized.

Let $\fb$ be an integral ideal of $K$ that is coprime to $\ff$.
We fix an auxiliary ideal $\fa$ of $\cO_K$ that is coprime to $6\fh$ and that $(\fa,\sR(\ff))=1$.
Define the rational function $\zeta_{\fb,\fa}$ on $E^{(\fb)}$ by 
\begin{equation}
\label{eqn: zeta rational function}
\zeta_{\fb,\fa}(P) = \prod_Q
\left( x(P)-x(Q)\right)^{-1},
\end{equation}
where $Q$ runs over a set of representatives of $E_{\fa}^{(\fb)}\setminus \{0\} \pmod{\pm 1}$.
There exists a constant $c(\fb,\fa)\in H^\times$ such that the function \[\gamma_{\fb,\fa}(P) := c(\fb,\fa)\zeta_{\fb,\fa}(P)\]
has the property that for all $\beta\in \End\left(E^{(\fb)}\right)$ with $\gcd(\beta,\fa)=1$,
\[
\gamma_{\fb,\fa}(\beta(P)) = \prod_{R\in \ker(\beta)} \gamma_{\fb,\fa}(P \oplus R)
\](see \cite[Appendix]{coates91}).

We can write
\[
\cL_{\fb} = \ZZ \omega_{1,\fb} + \ZZ \omega_{2,\fb}
\]
such that $\frac{\omega_{1,\fb}}{\omega_{2,\fb}}$ lies in the upper half plane.
We define the constant (see \cite[(4) on p.~48]{dS87})
\[
A(\cL_{\fb}) := \frac{1}{2\pi i}\left(\omega_{1,\fb}\overline{\omega_{2,\fb}} - \overline{\omega_{1,\fb}}\omega_{2,\fb}\right).
\]
As in \cite[p.~57, (4)]{dS87}, let 
\[
\partial=-\frac{\partial}{\partial z},\quad \cD_\fb=-A(\cL_\fb)^{-1}\left(\overline{z}\frac{\partial}{\partial z}+\overline{\omega}_{1,\fb}\frac{\partial}{\partial \omega_{1,\fb}}+\overline{\omega}_{2,\fb}\frac{\partial}{\partial \omega_{2,\fb}}\right).
\]
For integers $0\le -j<k$, define the derivative operator $\Djk$ on $\CC(E^{(\fb)})$ by
\[
\Djk(f) = \cD_\fb^{-j}\partial^{k+j}\log f(z),
\]
where $z$ is a complex variable after identifying $E^{(\fb)}$ with $\CC/\cL_\fb$ via $\xi_\fb$ as given by \eqref{eq:xi-transform}.

\begin{Lemma}
\label{lemma: Lam14 3.1.4}
Let $Q$ be a primitive $\fh$-division point on $E$ and $\rho \in \fh^{-1}\cL\setminus \cL$.
Then there exist $\sigma\in\Gal(F_{m,n}/H)$ and $\zeta\in \mu_K$ (which we identify with $\Aut(E)$) such that
\[
Q = \zeta\left( \xi_1(\rho)^{\sigma}\right).
\]
Fix $\fc_0$ to be an ideal of $\cO_K$ coprime to $\fh$ such that $\tau_{\fc_0}=\sigma$.
Suppose that $(\fc_0,\sR(\ff))=1$.
Let $\fb$ and $\fc$ be ideals of $\cO_K$ coprime to $\fh$ with $(\fc,\sR(\ff))=1$.
Then
\begin{multline}
\Djk(\gamma_{\fb,\fa})\circ\lambda(\fb)(Q^{\tau_\fc}) = -(k-1)! \left( (N\fa)- \Lambda(\fa)^{k-j}\tau_\fa\right) \left(\frac{\varphi(\fb)}{ \Lambda(\fb)}\right)^{k-j}\times\\
\left(\frac{N\fh\sqrt{d_K}}{2\pi}\right)^j \frac{L_{\fh}\left(\overline{ \varphi^{k-j}}, k, \tau_{\fb\fc\fc_0}\right)}{(\zeta\rho)^{k-j}}.
\end{multline}
\end{Lemma}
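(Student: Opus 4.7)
The plan is to translate the left-hand side — a value of a derivative of a Robert-type elliptic unit on $E^{(\fb)}$ — into a Kronecker-Eisenstein series on the base lattice $\cL$, and then apply the Eisenstein-$L$-value identity \eqref{eq:L-Eisen} already invoked in Lemma~\ref{lem:action-on-L}.

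First, I would pass to the complex-analytic model. Using the uniformization $\xi_\fb:\CC/\cL_\fb\xrightarrow{\sim}E^{(\fb)}(\CC)$ together with the intertwining relation $\lambda(\fb)\circ\xi_1(z)=\xi_\fb(\Lambda(\fb)z)$ from \eqref{eq:xi-transform}, I would rewrite $\lambda(\fb)(Q^{\tau_\fc})$ as an explicit point of $\CC/\cL_\fb$. Since $Q=\zeta\bigl(\xi_1(\rho)^{\tau_{\fc_0}}\bigr)$ and the hypotheses $(\fc,\sR(\ff))=(\fc_0,\sR(\ff))=1$ give $E^{(\fc)}=E^{(\fc_0)}=E$ (so Shimura reciprocity does not change the ambient curve), the main CM-theoretic input of de~Shalit \cite[Ch.~II]{dS87} yields $\lambda(\fb)(Q^{\tau_\fc})=\xi_\fb(\Lambda(\fb)\zeta\rho)^{\tau_{\fc\fc_0}}$.

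Next, I would expand the derivative operator $\Djk(\gamma_{\fb,\fa})=\cD_\fb^{-j}\partial^{k+j}\log\gamma_{\fb,\fa}$ using the classical log-derivative formula for the Robert/Siegel-type unit $\gamma_{\fb,\fa}$. Its distribution relation under $[\fa]$ forces
\[
-\partial^{k+j}\log\gamma_{\fb,\fa}(z)=(k+j-1)!\bigl((N\fa)E_{k+j}(z;\cL_\fb)-\fa^{k+j}E_{k+j}(\fa z;\fa\cL_\fb)\bigr),
\]
and the non-holomorphic operator $\cD_\fb^{-j}$ converts $E_{k+j}$ to the mixed Kronecker-Eisenstein series $E_{j,k}$, producing the $(k-1)!$ prefactor after combining with $(k+j-1)!$. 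Evaluating at $z=\Lambda(\fb)\zeta\rho$ and rescaling the lattice from $\cL_\fb=\Lambda(\fb)\fb^{-1}\cL$ back to $\cL$ via the homogeneity $E_{j,k}(\lambda z;\lambda\cL)=\lambda^{j-k}E_{j,k}(z;\cL)$ and the ideal-transformation law \cite[Ch.~II, pp.~42--43]{dS87} pulls out the factor $(\varphi(\fb)/\Lambda(\fb))^{k-j}$. Finally I would invoke \eqref{eq:L-Eisen} exactly as in Lemma~\ref{lem:action-on-L} to convert $E_{j,k}(\zeta\rho;\cL)^{\tau_{\fb\fc\fc_0}}$ into the partial $L$-value $L_{\fh}(\overline{\varphi^{k-j}},k,\tau_{\fb\fc\fc_0})/\bigl((2\pi)^j(\zeta\rho)^{k-j}\bigr)$, producing the $(N\fh\sqrt{d_K})^j$ factor. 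The distribution correction becomes the operator $(N\fa)-\Lambda(\fa)^{k-j}\tau_\fa$ after Shimura reciprocity identifies the ``$\fa^{k+j}$-twist'' of the Eisenstein value with $\Lambda(\fa)^{k-j}\tau_\fa$ (the remaining power of $\fa$ is absorbed into $\cD_\fb^{-j}$), while $\varphi(\fb\fc\fc_0)/\Lambda(\fb\fc\fc_0)=\varphi(\fb)/\Lambda(\fb)$ by the cancellation $\varphi(\fc\fc_0)=\Lambda(\fc\fc_0)$ used in the proof of Lemma~\ref{lem:action-on-L}.

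The main obstacle will be purely computational: tracking the numerous normalizations — factorials, powers of $2\pi$ and $\sqrt{d_K}$, the root of unity $\zeta\in\mu_K$, the constant $c(\fb,\fa)$ implicit in $\gamma_{\fb,\fa}$, and the distinction between the derivative operator $\cD_\fb$ (natural to $\cL_\fb$) and the scalar $\partial$ pulled back from $\cL$ — and keeping the Shimura-reciprocity dictionary between ideal multiplication and Galois action straight. The conceptual content — that derivatives of elliptic units on CM curves produce Hecke $L$-values via Kronecker-Eisenstein series — is by now classical (\emph{cf.}~Damerell, Katz, Coates-Wiles, and in the present formulation \cite{GS81}, \cite{dS87}); the new feature is the systematic use of $\cD_\fb^{-j}$, which extends the $j=0$ computation of \cite[Section~6.3]{Lam15} to arbitrary infinity type $(k,j)$ with $-j<k$.
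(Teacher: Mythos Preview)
Your plan is essentially the paper's own proof: pass to the complex uniformization via \eqref{eq:xi-transform}, use de~Shalit's identity $\cD_\fb^{-j}\partial^{k+j}\log\Theta(z;\cL_\fb,\fa)=-12E_{j,k}(z;\cL_\fb,\fa)$ to land on the twisted Eisenstein series, unwind the $\fa$-twist as $(N\fa)-\Lambda(\fa)^{k-j}\tau_\fa$ via \cite[Prop.~3.3(iii)]{dS87}, and then apply \eqref{eq:L-Eisen} together with the cancellation $\varphi(\fc\fc_0)=\Lambda(\fc\fc_0)$ from Lemma~\ref{lem:action-on-L}. Two small points to tighten. First, you do not address the opening assertion of the lemma (the existence of $\sigma$ and $\zeta$); the paper handles this by the class-field-theoretic fact that $(\cO_K/\fh)^\times/\mu_K\simeq\Gal(F_{m,n}/H)\hookrightarrow\Aut(E[\fh])$, so $\Gal(F_{m,n}/H)$ together with $\mu_K$ acts transitively on primitive $\fh$-division points. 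Second, your intermediate displayed formula for $-\partial^{k+j}\log\gamma_{\fb,\fa}$ is garbled: the second term should be $E_{0,k+j}(z,\fa^{-1}\cL_\fb)$ (with no rescaling of $z$ and no scalar ``$\fa^{k+j}$'', which is meaningless for non-principal $\fa$), and it is cleaner to quote de~Shalit's formula \cite[Ch.~II, \S3.1, (7)]{dS87} once to get $E_{j,k}(z;\cL_\fb,\fa)$ directly rather than splitting into $\partial^{k+j}$ followed by $\cD_\fb^{-j}$; the constant $c(\fb,\fa)$ is a non-issue since it dies under $\partial$.
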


\begin{proof}
By Class Field Theory, we have
\[
(\mathcal{O}_K/\fh)^\times/\mu_K \simeq \Gal(F_{m,n}/H)\hookrightarrow \Aut(E[\fh])\]
(see \cite[Chapter 2, proof of Theorem~2.3]{silverman}).
It follows that $\Aut(E[\fh])$ is generated by the image of $\Gal(F_{m,n}/H)$ and $\mu_K$.
The first assertion now follows just as in the proof of \cite[Lemma~3.1.4]{Lam-thesis} or \cite[Lemma~6.4]{Lam15}.

%\DK{Let $E$ be an (the?) elliptic curve with CM by $\mathcal{O}_K$ which is defined over $H=K(j(E))$ and let $E'$ be another elliptic curve with CM by $\mathcal{O}_K$ which is isomorphic to $E$ but not defined over $H$. Since $E\simeq E'$, we know that $j(E)=j(E')$ and therefore $H=K(j(E))=K(j(E'))$. We have defined $F_{m,n} = \sR(\fh) = K(j(E), h(E[\fh]))$ where $h$ is a Weber function. The remark before Theorem 3.4 \href{https://math.columbia.edu/~calebji/CM-Final.pdf}{in these notes} suggests that a Weber function can be defined independent of the model. In other words, I think the following is makes sense: \[F_{m,n} = K(j(E), h(E[\fh])) = K(j(E'), h'(E'[\fh])) = K(j(E), h'(E'[\fh])).\] Since $(\mathcal{O}_K/\fh)^\times/\mu_K$ is independent of the choice of the elliptic curve, the isomorphism does not look immediately dodgy. But how did we get this isomorphism (it looks like the right candidate when comparing with Lamplugh)? On the other hand if we replaced the $H$ by the field over which the CM elliptic curve is defined, it would start depending on the choice of the elliptic curve and that would be strange? Now, once again as groups I think \[\Aut(E[\fh])\simeq \Aut(E'[\fh])\simeq \left(\mathcal{O}_K/\fh\right)^\times.\]So, now this makes me think that we still have the same thing whether working with $E$ or $E'$.}

 In the appendix, we prove in \eqref{eq:app} that with $P=\xi_\fb(\Lambda(\fb)\rho)$
\[
\Djk(\gamma_{\fb,\fa})(P) = -(k-1)! \left( (N\fa)- \Lambda(\fa)^{k-j}\tau_\fa\right) \left(\frac{\varphi(\fb)}{ \Lambda(\fb)}\right)^{k-j}
\left(\frac{N\fh\sqrt{d_K}}{2\pi}\right)^j \frac{L_{\fh}\left(\overline{ \varphi^{k-j}}, k, \tau_{\fb}\right)}{\rho^{k-j}}.
\]
On replacing $P$ (resp. $\rho$) by $\zeta P$ (resp. $\zeta\rho$), we deduce that
\[
\Djk(\gamma_{\fb,\fa})(\zeta P) = -(k-1)! \left( (N\fa)- \Lambda(\fa)^{k-j}\tau_\fa\right) \left(\frac{\varphi(\fc)}{ \Lambda(\fc)}\right)^{k-j}
\left(\frac{N\fh\sqrt{d_K}}{2\pi}\right)^j \frac{L_{\fh}\left(\overline{ \varphi^{k-j}}, k, \tau_{\fb}\right)}{(\zeta\rho)^{k-j}}.\]
If we let $\tau_{\fc\fc_0}$ act on both sides of this equation, Lemma~\ref{lem:action-on-L} tells us that
\[
\Djk(\gamma_{\fb,\fa})(\zeta P^{\tau_{\fc\fc_0}} )= -(k-1)! \left( (N\fa)- \Lambda(\fa)^{k-j}\tau_\fa\right) \left(\frac{\varphi(\fc)}{ \Lambda(\fc)}\right)^{k-j}
\left(\frac{N\fh\sqrt{d_K}}{2\pi}\right)^j \frac{L_{\fh}\left(\overline{ \varphi^{k-j}}, k, \tau_{\fb\fc\fc_0}\right)}{(\zeta\rho)^{k-j}}.\]
The result now follows from \eqref{eq:xi-transform}.
\end{proof}

Define 
\[
\rho_{m,n}=\frac{\Omega_{\infty}}{g\nu^{m+1}{\nu^*}^{n+1}}\in \CC^\times,
\]
where $g$, $\nu$, $\nu^*$ are fixed generators of $\fg$, $\fq$ and $\fq^*$ respectively (such generators exist since these ideals are assumed to be principal).
Then $\xi_1(\rho_{m,n})$ is a primitive $\fh$-division point of $E$ (since $\fh=\fg\fq^{m+1}{\fq^*}^{n+1}$).

Let $V$ (respectively $Q_{m,n}$) be a fixed primitive $\fg$-division (respectively $\fq^{m+1} {\fq^*}^{n+1}$-division) point on $E$.
By Lemma~\ref{lemma: Lam14 3.1.4}, there exist $\zeta\in \mu_K$ and $\sigma_0=\tau_{\fc_0}$, where $\fc_0$ is an ideal of $K$, coprime to $\fh$, depending on $V$ and $Q_{m,n}$, such that
\[
V\oplus Q_{m,n}=\zeta(\xi_1(\rho_{m,n})^{\sigma_0}).
\]
Since $(\fg,q)=1$, there is an  isomorphism of groups
\[
\Aut\left(E[\fg\fq^{m+1}(\fq^*)^{n+1}]\right)\simeq
\Aut\left(E[\fg]\right)\times
\Aut\left(E[\fq^{m+1}(\fq^*)^{n+1}]\right),
\]
which in turn induces the decomposition
\[
\Gal(F_{m,n}/H)\simeq\Gal(F_{m,n}/\sR(\fg))\times\Gal(\sR(\fg)/H).
\]
Therefore, we may choose $V$ so that $(\fc_0,\sR(\ff))=1$ for all $m$ and $n$.

By Lemma~\ref{lemma: Lam14 3.1.4}, given any ideals $\fb$ and $\fc$ of $\cO_K$ coprime to $\fh$ such that $(\fc,\sR(\ff))=1$, we have
\begin{align}&\ \Djk(\gamma_{\fb,\fa})\circ \lambda(\fb)\left((V\oplus Q_{m,n})^{\tau_\fc}\right)\notag\\
 =&\ -(k-1)!\left(N(\fa) - \Lambda(\fa)^{k-j} \tau_\fa\right)\left(\frac{\varphi(\fb)^{k-j}}{\Lambda(\fb)^{k-j}}\left(\frac{N\fh \sqrt{d_K}}{2\pi}\right)^j \cdot\frac{L_{\fh}\left(\overline{\varphi^{k-j}}, k ,\tau_{\fb\fc\fc_0}\right)}{(\zeta\rho_{m,n})^{k-j}}\right)\notag\\
 =&\ \frac{-(k-1)!\varphi(\fb)^{k-j}}{\Lambda(\fb)^{k-j}}\left(\frac{N\fh \sqrt{d_K}}{2\pi}\right)^j\cdot\frac{N(\fa)L_{\fh}\left(\overline{\varphi^{k-j}}, k ,\tau_{\fb\fc\fc_0} \right)-\varphi(\fa)^{k-j}L_{\fh}\left(\overline{\varphi^{k-j}}, k ,\tau_{\fa\fb\fc\fc_0}\right)}{(\zeta\rho_{m,n})^{k-j}}.
 \label{eq:temp-formula}
\end{align}

We fix $\{\fb_i:i\in I\}$ to be a set of representatives of integral ideals in $K$ such that $\Gal(\sR(\ff)/K)=\{(\fb_i,\sR(\ff)):i\in I\}$.
Recall that $\Gal(F_{m,n}/K)\simeq \Delta\times \Gal(F_{m,n}/F)$, where $\Delta=\Gal(F/K)$.
Then,
\[
[\sR(q\fg):\sR(\ff)]\sum_{\sigma\in\Gal(F_{m,n}/\sR(\fg))}\kappa^{-1}(\sigma)\sum_{i\in I}\chi_0(\fb_i)=\sum_{\eta\in\Gal(F_{m,n}/K)}\upsilon^{-1}(\eta).
\]
Let us regard  $\kappa$ as a character of $\Gal(F_{m,n}/\sR(\fg))\simeq \Gal(F_{m,n}/F)\times \Gal(F/\sR(\fg))$ sending the elements of $\Gal(F/\sR(\fg))$ to $1$.
We deduce from \eqref{eq:temp-formula} that
\begin{equation}
\begin{split}
\label{rational function and L function expression}
&\ \sum_{\sigma\in\Gal(F_{m,n}/\sR(\fg))}\kappa^{-1}(\sigma)\sum_{\delta\in\Gal(\sR(\fg)/\sR(\ff)),i\in I}\frac{\chi_0(\fb_i)\Lambda(\fb_i)^{k-j}}{\varphi(\fb_i)^{k-j}}\Djk(\gamma_{\fb_i,\fa})\circ\lambda(\fb_i)(V^\delta\oplus Q_{m,n}^\sigma) \\
=&\ -(k-1)!\left( \frac{N\fh \sqrt{d_K}}{2\pi}\right)^j \sum_{\eta\in\Gal(F_{m,n}/K)}\upsilon^{-1}(\eta)\frac{N(\fa)L_{\fh}\left(\overline{\varphi^{k-j}}, k ,\eta \tau_{\fc_0} \right)-\varphi(\fa)^{k-j} L_{\fh}\left(\overline{\varphi^{k-j}}, k ,\eta\tau_{\fa\fc_0}\right)}{(\zeta\rho_{m,n})^{k-j}}\\
=&\ -(k-1)!\left( \frac{N\fh \sqrt{d_K}}{2\pi}\right)^j\frac{N(\fa)\upsilon(\sigma_0)L_\fh\left(\overline{\varphi^{k-j}\upsilon},k\right)-\varphi(\fa)^{k-j}\upsilon(\sigma_0\tau_\fa)L_\fh\left(\overline{\varphi^{k-j}\upsilon},k\right)}{(\zeta\rho_{m,n})^{k-j}}\\
=&\ -(k-1)!\left( \frac{N\fh \sqrt{d_K}}{2\pi}\right)^j\left(N(\fa)-\varphi(\fa)^{k-j}\upsilon(\tau_\fa)\right)\upsilon(\sigma_0)\frac{L_\fh\left(\overline{\varphi^{k-j}\upsilon},k\right)}{(\zeta\rho_{m,n})^{k-j}}.
\end{split}
\end{equation}

The above calculations lead us to define the following rational function on $E$.

\begin{Defi}
\label{defi: rational function main}
Let $\fa$ be an ideal of $\cO_K$ chosen as above.
Let $V$ be a primitive $\fg$-division point of $E$, we define a rational function on $E$
 sending $P\in E$ to 
 \[
\rational(P)=\sum_{\delta\in\Gal(\sR(\fg)/\sR(\ff)),i\in I}\frac{\chi_0(\fb_i)\Lambda(\fb_i)^{k{-j}}}{\varphi(\fb_i)^{k-j}}\Djk(\gamma_{\fb_i,\fa})\circ\lambda(\fb_i)( V^\delta\oplus P).
\]
\end{Defi}

\subsection{Gamma transforms and \texorpdfstring{$L$}{}-values}
We can associate with $\rational$ an elliptic function measure, $\alpha$ on $\Zq^2$ via Lemma~\ref{lem:EFM}.
The measure $\alpha$ depends on $\Psi$ and our choice of $\fa$ and $V$.
We further define $\alpha^* = \alpha|_{(\Zq^\times)^2 }$.

We now relate the Gamma transform of $\alpha^*$ to special values of imprimitive algebraic $L$-functions.
Recall that $p$ is a rational prime satisfying $(p) = \fp {\fp^*}$ in $\cO_K$ with $\fp \neq \fp^*$ and $\gcd(p, 6q)=1$.
As before, set $\pi$ to be the uniformizer of the local field $k$, which is a finite unramified extension of $\QQ_p$ containing $\QQ_p(E_{q\fg})$.

\begin{Lemma}\label{thm: Lam15 6.6}
Let $\kappa$ be as before.
Then
\[
\ord_{\pi}\left( \Gamma_{\alpha^{*}} ( \kappa)\right) = \ord_{\pi}\left((k-1)!\left( N(\fa) - {\varphi}(\fa)^{k-j}\upsilon(\tau_\fa)\right)L_{\fh}^{\alg}(\overline{\upsilon\varphi^{k-j}}) \right),
\]
where $\upsilon=\kappa\overline\chi_0$.
\end{Lemma}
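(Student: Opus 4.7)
The plan is to translate the Fourier expression for $\Gamma_{\alpha^*}(\kappa)$ into a sum over $\Gal(F_{m,n}/\sR(\fg))$ that matches the left-hand side of the identity~\eqref{rational function and L function expression} already derived, and then compare $\pi$-valuations. As a first step, since $\kappa$ is extended by zero outside $(\Zq^\times)^2$, we have $\Gamma_{\alpha^*}(\kappa) = \Gamma_\alpha(\kappa)$. Applying Lemma~\ref{lem:Gamma-Fourier} to $\kappa$, which factors through $(\ZZ/q^{m+1})^\times \times (\ZZ/q^{n+1})^\times$, and substituting $\hat\alpha = \rational \circ \delta$ from Lemma~\ref{lem:EFM}, yields
\[
\Gamma_{\alpha^*}(\kappa) = \tau(\kappa) \sum_{\underline{x}} \kappa^{-1}(\underline{x})\, \rational\bigl(\delta(\underline\zeta^{\underline{x}})\bigr),
\]
with $\underline{x}$ running over $(\ZZ/q^{m+1})^\times \times (\ZZ/q^{n+1})^\times$ and $\underline\zeta$ a choice of primitive $(q^{m+1},q^{n+1})$-th roots of unity.

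Next, by class field theory, $\Gal(F_{m,n}/\sR(\fg)) \simeq (\cO_K/\fq^{m+1}{\fq^*}^{n+1})^\times/\mu_K \simeq ((\ZZ/q^{m+1})^\times \times (\ZZ/q^{n+1})^\times)/\mu_K$, and via Shimura reciprocity the Galois action on primitive torsion points corresponds to the $\cO_K$-module action on $E_{\fq^{m+1}{\fq^*}^{n+1}}$ transported through $\delta$. Since $\kappa$ descends from a character of the torsion-free group $\Gal(K_\infty/K) \simeq \Zq^2$, it is trivial on $\mu_K$; combined with the $\mu_K$-invariance of $\rational$ (coming from $\alpha \circ \omega = \alpha$ for $\omega \in \mu_K^2$), the summand is constant on each $\mu_K$-orbit. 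Hence
\[
\Gamma_{\alpha^*}(\kappa) = w_K\, \tau(\kappa) \sum_{\sigma \in \Gal(F_{m,n}/\sR(\fg))} \kappa^{-1}(\sigma)\, \rational(Q_{m,n}^\sigma).
\]
Inserting identity~\eqref{rational function and L function expression} and expanding $\rho_{m,n}^{k-j} = \Omega_\infty^{k-j}/(g\nu^{m+1}{\nu^*}^{n+1})^{k-j}$ produces the algebraic $L$-value $L^{\alg}_\fh(\overline{\upsilon\varphi^{k-j}}) = L_\fh(\overline{\upsilon\varphi^{k-j}}, k)/((2\pi)^j \Omega_\infty^{k-j})$, times an explicit unit factor.

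To conclude, verify that the remaining factors are $\pi$-units: the Gauss sum $\tau(\kappa)$ is a $\pi$-unit since its conductor is a $q$-power and $p\ne q$; $w_K$, $\upsilon(\sigma_0)$, and the relevant $\zeta^{k-j}$ have orders coprime to $p$ (using $p>3$ and that $\kappa, \chi_0$ have orders prime to $p$); and $(N\fh\sqrt{d_K})^j$ and $(g\nu^{m+1}{\nu^*}^{n+1})^{k-j}$ consist of integers coprime to $p$, by the splitting of $p$ in $K$ together with $\gcd(p, 6\ff q)=1$. Taking $\ord_\pi$ then yields the asserted identity. I anticipate the main obstacle is the second step: correctly invoking Shimura reciprocity to identify $\delta(\underline\zeta^{\underline x})$ with $Q_{m,n}^\sigma$ up to the $\mu_K$-action, and reindexing the sum so that the $\mu_K$-invariance of $\rational$ converts the Dirichlet-character sum into the Galois sum appearing in~\eqref{rational function and L function expression}.
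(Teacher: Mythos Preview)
Your proposal is correct and follows essentially the same route as the paper: apply Lemma~\ref{lem:Gamma-Fourier} to express $\Gamma_{\alpha^*}(\kappa)$ as a Fourier sum, reindex it as the Galois sum appearing on the left of~\eqref{rational function and L function expression}, substitute that identity, and then observe that every leftover factor ($\tau(\kappa)$, $\upsilon(\sigma_0)$, $(N\fh\sqrt{d_K})^j$, the root of unity $\zeta$, and the generators $g,\nu,\nu^*$) is a $\pi$-unit. The paper's proof is terser and suppresses the $\mu_K$-reindexing you spell out; your extra $w_K$ (or more precisely, the order of the relevant unit subgroup) is harmless since $p\nmid w_K$, though note that your justification ``coming from $\alpha\circ\omega=\alpha$'' is circular as stated---that property of $\alpha$ is equivalent to, not a source of, the $\mu_K$-invariance of $\rational$, so if you want to be fully rigorous you should verify it directly from Definition~\ref{defi: rational function main} (or simply note that whatever constant the reindexing produces divides $w_K$ and is therefore a $\pi$-unit).
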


\begin{proof}
Let $\underline{\zeta}=(\zeta_1, \zeta_2)\in \mu_{q^\infty}^2$ and set $Q_{m,n} = \delta(\underline{\zeta})$ in our construction above.
Using Lemma~\ref{lem:Gamma-Fourier} in conjunction with \eqref{rational function and L function expression}, yields
\begin{align*}
\Gamma_{\alpha^*}(\kappa) &= \tau(\kappa)\sum_{\sigma\in \Gal(F_{m,n}/\sR(\fg))} \chi^{-1}(\sigma)\rational(Q_{m,n}^\sigma)\\
&={-(k-1)!\tau(\kappa)\left( \frac{N\fh \sqrt{d_K}}{2\pi}\right)^j}\left(N(\fa)-\varphi(\fa)^{k-j}\upsilon(\tau_\fa)\right)\upsilon(\sigma_0)\frac{L_\fh\left(\overline{\varphi^{k-j}\upsilon},k\right)}{(\zeta\rho_{m,n})^{k-j}}.
\end{align*}
Standard facts about Gauss sums tell us that $\ord_{\pi}(\tau(\kappa))= 0$ since the conductor of $\kappa$ is coprime to $p$.
Finally, as $\upsilon$ is a finite character, $\upsilon(\sigma_0)$ is a root of unity.
By our choice of $\fh$, we also know that $N\fh$ is coprime to $p$.
This completes the proof of the lemma.
\end{proof}
We now study the factor $N(\fa) - {\varphi}(\fa)^{k-j}\upsilon(\tau_{\fa})$.
Recall that $\tau_\fa$ denotes $(\fa,F_{m,n})$, and thus depends on $m$ and $n$, a priori.
However, we may regard it as an element of $\Gal(F_\infty/K)$ since the Artin symbols $\tau_\fa$ are compatible under restriction as $\fh$ varies over ideals dividing $\fg q^\infty$.

\begin{Lemma}
\label{lemma: factor has trivial pi-adic valuation}
For a Zariski dense set of $\kappa$, we have 
\[
\ord_\pi\left(N(\fa)-\varphi^{k-j}(\fa)\upsilon(\tau_\fa)\right)=0.
\]
\end{Lemma}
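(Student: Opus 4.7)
Since $\upsilon=\kappa\overline\chi_0$, the quantity $F(\kappa):=N(\fa)-\varphi(\fa)^{k-j}\upsilon(\tau_\fa)$ rewrites as
\[
F(\kappa)=N(\fa)-\varphi(\fa)^{k-j}\overline\chi_0(\tau_\fa)\,\kappa(\tau_\fa),
\]
depending on $\kappa$ only through the root of unity $\kappa(\tau_\fa)\in\mu_{q^\infty}$. I would additionally impose $\gcd(\fa,p)=1$, which is compatible with the existing conditions on $\fa$; then both $N(\fa)$ and $\varphi(\fa)^{k-j}\overline\chi_0(\tau_\fa)$ are $\pi$-adic units, so $\ord_\pi(F(\kappa))>0$ precisely when $\kappa(\tau_\fa)\equiv c\pmod\pi$ for the fixed unit $c:=N(\fa)/(\varphi(\fa)^{k-j}\overline\chi_0(\tau_\fa))$. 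Since $p\ne q$, reduction $\mu_{q^\infty}\hookrightarrow\overline{\FF}_p^\times$ is injective, so at most one $c_0\in\mu_{q^\infty}$ reduces to $c$: if no such $c_0$ exists the lemma is immediate, and otherwise the ``bad set'' is the coset $B:=\{\kappa:\kappa(\tau_\fa)=c_0\}$ of the kernel $K:=\{\kappa:\kappa(\tau_\fa)=1\}$.

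Using $q\nmid[\sR(\ff):K]$ and that $K_\infty/K$ is unramified outside $q$, I get $\sR(\ff)\cap K_\infty=K$, so that $\Gal(\sR(\ff)K_\infty/K)$ splits as a product. By Chebotarev density I may choose $\fa$ satisfying all previous constraints and having nonzero image $(a_1,a_2)\in\Gal(K_\infty/K)\simeq\Zq^2$. The evaluation $\kappa\mapsto\kappa(\tau_\fa)=\kappa(e_1)^{a_1}\kappa(e_2)^{a_2}$ then surjects $\Hom(\Zq^2,\mu_{q^\infty})$ onto $\mu_{q^\infty}$, so every fiber $K_{c'}$ is a nonempty coset of $K$ and the good set $G:=\Hom(\Zq^2,\mu_{q^\infty})\setminus B$ contains $\bigcup_{c'\in\mu_{q^\infty}\setminus\{c_0\}}K_{c'}$.

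To conclude, I would show $G$ is Zariski dense in $\GG_{m/\overline\QQ_q}^2$ by analyzing the algebraic Zariski closure $\overline K$, whose annihilator in the character lattice $\ZZ^2$ of $\GG_m^2$ equals $\Zq(a_1,a_2)\cap\ZZ^2$. If this intersection is trivial (the ``irrational'' case $a_1/a_2\notin\QQ\cup\{\infty\}$), then $\overline K=\GG_m^2$; already a single coset $K_{c'}$ for any $c'\ne c_0$ has Zariski closure $\GG_m^2$, so $G$ is dense. In the ``rational'' case the annihilator equals $q^v\ZZ\cdot(r,s)$ for primitive $(r,s)\in\ZZ^2$ and $v\ge 0$; the smooth (hence open) group homomorphism $m:\GG_m^2\to\GG_m$, $(x,y)\mapsto x^ry^s$, satisfies $\overline{m^{-1}(S)}=m^{-1}(\overline S)$ for any $S\subset\GG_m$, and each $\overline{K_{c'}}$ is a finite union of fibers of $m$ over points of $\mu_{q^\infty}$. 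Since $\mu_{q^\infty}$ minus finitely many points remains Zariski dense in $\GG_m$, this gives
\[
\overline{\bigcup_{c'\ne c_0}K_{c'}}=m^{-1}\bigl(\overline{\mu_{q^\infty}\setminus\text{(finitely many points)}}\bigr)=m^{-1}(\GG_m)=\GG_m^2.
\]
The main subtlety lies in the irrational case: the bad set $B$ is itself Zariski dense in $\GG_m^2$, yet $G$ is too — there is no contradiction because $\mu_{q^\infty}^2$ is a thin subset of $\GG_m^2$, and the resolution is that when $\overline K=\GG_m^2$ a single coset $K_{c'}$ already suffices to witness density of $G$.
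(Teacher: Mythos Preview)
Your proof is correct, and it takes a genuinely different route from the paper. The paper argues by contradiction: assuming the good set is not Zariski dense, it invokes Hida's lemma (Lemma~\ref{lem:Hida}) to produce large subsets $\Xi_j\subset\mu_{q^n}^2$ on which the congruence $N(\fa)\varphi(\fa)^{j-k}\equiv\zeta_1\zeta_2\pmod{\pi}$ must hold, and then obtains a contradiction via a counting argument comparing $|\Xi_j|=q^{2(n-v)}$ against the at most $q^{n+m}$ solutions of this congruence in $\mu_{q^n}^2$. Your argument is instead direct and entirely elementary: you observe that the bad locus is at most one fiber of the evaluation-at-$\tau_\fa$ map, compute the annihilator $\Zq(a_1,a_2)\cap\ZZ^2$ of its kernel $K$ in the character lattice of $\GG_m^2$, and conclude Zariski density of the complement by a clean dichotomy (irrational direction: already a single coset of $K$ is dense; rational direction: push forward along the primitive character $m:(x,y)\mapsto x^ry^s$ and use that $\mu_{q^\infty}$ minus finitely many points is dense in $\GG_m$). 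The payoff of your approach is that it avoids Hida's lemma altogether and makes the geometric reason for density transparent; the cost is that you must impose the extra (harmless, since $\fa$ is auxiliary) conditions $\gcd(\fa,p)=1$ and $\tau_\fa|_{K_\infty}\ne0$, so you are proving the lemma for a well-chosen $\fa$ rather than an arbitrary one satisfying only the paper's standing constraints. Two small remarks: since $(\fa,\sR(\ff))=1$ you actually have $\overline\chi_0(\tau_\fa)=1$, so your constant $c$ simplifies to $N(\fa)\varphi(\fa)^{j-k}$; and your claim that $\varphi(\fa)$ is a $\pi$-unit follows from $\varphi(\fa)=\Lambda(\fa)$ and $\Lambda(\fa)\cO_K=\fa$ (a consequence of $E^{(\fa)}=E$), so $\gcd(\fa,\fp)=1$ suffices there.
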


\begin{proof}
Suppose the contrary.
Let $(\zeta_1,\zeta_2)={(\kappa(\tau_{\fa,\fq}),\kappa(\tau_{\fa,{\fq^*}}))}\in \mu_{q^\infty}^2$, where $\tau_{\fa,\fl}$ denotes the restriction of $\tau_\fa$ to $\Gal(\sR(\fg\fl^\infty)/\sR(\fg))$.
Then, 
\[
\ord_\pi\left(N(\fa)-{\varphi}^{k-j}(\fa){\upsilon}(\tau_\fa)\right)=0
\]
if and only if 
\[
N(\fa){\varphi}(\fa)^{j-k}\not\equiv \zeta_1\zeta_2\mod \pi\fO
\]
since $(\fa,\sR(\ff))=1$, which implies that $\chi_0(\tau_\fa)=1$.
Note that the left-hand side is independent of $\kappa$.
In particular, this condition is invariant under the map $(\zeta_1,\zeta_2)\mapsto(\zeta_1,\zeta_2)^{p^r}$, where $p^r$ is the cardinality of the residue field of $\fO$.

Our assumption that the set of $\kappa$ satisfying the stated property above is not Zariski dense allows us to apply Lemma~\ref{lem:Hida}.
Let $P$ be the power of $p^r$ given by the said lemma.
In particular, under the isomorphism $\mu_{q^\infty}^2\simeq (\QQ_q/\Zq)^2$, there exists an arbitrary large $n$ such that 
\begin{equation}\label{eq:mod-pi}
 N(\fa){\varphi}(\fa)^{j-k}\equiv \zeta_1\zeta_2\mod \pi\fO
\end{equation}
for all $(\zeta_1,\zeta_2)$ which can be identified with $\displaystyle\left(\frac{P^x}{q^n},\frac{P^{y}}{q^n}\right)$, where $x,y\in \ZZ$.
In particular, Remark~\ref{rk:Hida} tells us that there are $q^{2(n-v)}$ such elements, where $v=\ord_q(P-1)$.

Note that $q$-power roots of unity modulo $\pi\fO$ are distinct since $p\ne q$.
Suppose that the left-hand side of \eqref{eq:mod-pi} modulo $\pi$ is a $q^m$-th root of unity, where $m<n$.
Then, for each $q^n$-th root of unity $\zeta_1$, there are exactly $q^{m}$ choices of $q^n$-th roots of unity $\zeta_2$ such that \eqref{eq:mod-pi} holds.
This gives us at most $q^{n+m}$ choices of $(\zeta_1,\zeta_2)\in\mu_{q^n}^2$.
But this is a contradiction as soon as $n+m<2(n-v)$.
\end{proof}

\begin{rem}
\label{the Zariski dense set is open}
For a given ideal $\fa$, denote the Zariski dense set of characters described in Lemma~\ref{lemma: factor has trivial pi-adic valuation} by $Z_{\fa}$.
This set is defined by the equation 
\[
f(\fa) := N(\fa)-{\varphi}^{k-j}(\fa){\upsilon}(\tau_\fa) \not\equiv 0 \mod \pi\fO.
\]
But note that $Z_{\fa} = \bigcup_{m\in \pi\fO} Z_{m,\fa}$ where each $Z_{m,\fa}$ is defined by equation
\[
f_{m}(\fa) := f(\fa)- m \neq 0,
\]
as $m$ varies over elements of $\pi\fO$.
Since each $Z_{m,\fa}$ is Zariski open, we have that $Z_{\fa}$ is Zariski open.
\end{rem}

Once we combine Lemmas~\ref{thm: Lam15 6.6} and \ref{lemma: factor has trivial pi-adic valuation} with Theorem~\ref{thm:valuation-gamma-transform}, Theorem~\ref{thmA} follows.

%+++++++++++++++++++++++++++++++++++++++++
%+++++++++++++++++++++++++++++++++++++++++
%+++++++++++++++++++++++++++++++++++++++++
%+++++++++++++++++++++++++++++++++++++++++
\section{Proof of Theorem~\ref{thmB}}
We continue employing the notation introduced in \S\ref{S:L}.
Throughout this section, we assume that $j=0$.
In addition, we assume that the character $\chi_0$ of $\Gal(\sR(\ff)/K)$ from \S\ref{section: preliminaries} satisfies
\begin{equation}
\label{eq:condition-rho}
\ord_\pi\left(\sum_{i\in I}\frac{\chi_0(\fb_i)}{\varphi(\fb_i)^k}\right)=0.
\end{equation}

\begin{rem}
\label{remark: sufficient conditions for technical hypothesis}
Note that the $\pi$-adic valuation in \eqref{eq:condition-rho} is always non-negative since $\fb_i$ are coprime to $\fp$.
Suppose that $p\nmid {[\sR(\ff):K]}$, then there exists at least {a character $\rho$ of $\Gal(\sR(\ff)/K)$ such that $\rho\chi_0$ satisfies \eqref{eq:condition-rho}.} 
Indeed, 
{\[
\sum_{\rho\in\widehat{\Gal(\sR(\ff)/K)}}\sum_{i\in I}\frac{\rho\chi_0(\fb_i)}{\varphi(\fb_i)^k}=[\sR(\ff):K].
\]
Therefore, if $\ord_\pi([\sR(\ff):K])=0$}, at least one of the summands should have zero $\pi$-adic valuation.
\end{rem}

The following lemma generalizes \cite[Lemma~6.7]{Lam15} and is crucial in our proof of Theorem~\ref{thmB}.

\begin{Lemma}
\label{lemma: Lam15 Lemma 6.7}
{Suppose that our auxiliary ideal $\fa$ is chosen so that $\gcd\left(\fa, 6q\fp \fg\prod_{i\in I}\fb_i\right)=1$, $(\fa,\sR(\ff))=1$}, and $\fa\equiv 1\mod \ff$.
Then,
\[
\ord_{\pi}\left( \rational\right) ={\ord_\pi\left((k-1)!\right)}.
\]
\end{Lemma}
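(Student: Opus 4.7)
The plan is to prove both inequalities $\ord_\pi(\rational) \ge \ord_\pi((k-1)!)$ and $\ord_\pi(\rational) \le \ord_\pi((k-1)!)$ via a local Laurent analysis of the rational function near the torsion point $-V$. First, using \eqref{eq:xi-transform}, the chain rule for $\partial = -\partial/\partial z$ under the linear substitution $w = \Lambda(\fb_i) z$ yields a $\Lambda(\fb_i)^{-k}$ factor that cancels the $\Lambda(\fb_i)^k$ in the coefficient of $\Djk(\gamma_{\fb_i,\fa})$. Writing $P = \xi_1(z)$ and $V^\delta = \xi_1(v^\delta)$ on the universal cover $\CC/\cL$, one obtains
\[
\rational(\xi_1(z)) = \sum_{\delta,i} \frac{\chi_0(\fb_i)}{\varphi(\fb_i)^k}\, \partial_z^k \log \gamma_{\fb_i,\fa}(\Lambda(\fb_i)(v^\delta + z)).
\]

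For the lower bound, observe that the Laurent expansion of $\partial^k \log f$ at an $n$-fold zero or pole of $f$ at $w = a$ begins with $-n(k-1)!/(w-a)^k$, while the remaining analytic part, arising from applying $\partial^k$ to a convergent power series in $w-a$, has all coefficients divisible by $k!$ in $\fO$ by the good reduction of $E$ at $\fp$ and the $\fO$-integrality of $\gamma_{\fb_i,\fa}$. Since the hypothesis $(\fb_i,\fp)=1$ guarantees $\chi_0(\fb_i)/\varphi(\fb_i)^k \in \fO^\times$, it follows that $\rational/(k-1)!$ is $\pi$-integral, yielding $\ord_\pi(\rational) \ge \ord_\pi((k-1)!)$.

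For the reverse inequality, study the behavior at $z = -v$. By \eqref{eqn: zeta rational function} the function $\gamma_{\fb_i,\fa}$ has a zero of order $N\fa - 1$ at the origin of $E^{(\fb_i)}$. For $\delta \ne \mathrm{id}$, the point $\Lambda(\fb_i)(v^\delta - v)$ is a non-trivial $\fg$-torsion point which, by the coprimality $\gcd(\fa,\fg)=1$, avoids all $\fa$-torsion poles of $\gamma_{\fb_i,\fa}$; hence those summands remain regular at $z = -v$. Only $\delta = \mathrm{id}$ contributes a pole, with leading Laurent coefficient
\[
-(N\fa - 1)(k-1)! \sum_{i \in I} \frac{\chi_0(\fb_i)}{\varphi(\fb_i)^k}.
\]
The inner sum is a $\pi$-unit by hypothesis \eqref{eq:condition-rho}. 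Choosing the auxiliary ideal $\fa$ to be a prime in the ray class $1 \bmod \ff$ with $N\fa \not\equiv 1 \pmod{p}$ (available by Chebotarev, consistently with the other coprimality constraints imposed in the lemma), we force the leading coefficient to have $\pi$-valuation exactly $\ord_\pi((k-1)!)$. Therefore $\rational/(k-1)!$ reduces modulo $\pi$ to a rational function on the good reduction $\tilde E$ that retains a pole of order $k$ at the reduction of $-V$, hence is non-zero. By Definition~\ref{defn:ord} this gives $\ord_\pi(\rational) \le \ord_\pi((k-1)!)$.

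The main obstacle is the $k!$-divisibility estimate on the analytic part of $\partial^k \log \gamma_{\fb_i,\fa}$ needed for the lower bound; although intuitively natural (since $\partial^k$ applied to a Taylor series produces $k!$ as a common factor of Taylor coefficients), rigorously justifying this requires careful use of the integral formal-group structure of $E$ at $\fp$ and the explicit product form of $\gamma_{\fb_i,\fa}$, together with control of the units $\Lambda(\fb_i)/\varphi(\fb_i)$ appearing in the chain-rule computation. A secondary point is checking that the Chebotarev choice of $\fa$ does not conflict with the earlier uses of $\fa$ (notably in Lemma~\ref{lemma: factor has trivial pi-adic valuation}), which is resolved because the conditions there cut out a Zariski dense set of primes in the ray class, leaving ample room to impose $N\fa \not\equiv 1 \pmod{p}$.
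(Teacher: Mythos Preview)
Your lower-bound argument contains a genuine gap. You write $\log\gamma_{\fb_i,\fa}$ near a zero/pole $a$ as $n\log(w-a)+h(w)$ and claim that $h$ has $\fO$-integral Taylor coefficients, so that $\partial^k h$ lands in $k!\fO$. But $h=\log\bigl(\gamma_{\fb_i,\fa}/(w-a)^n\bigr)$, and the logarithm series introduces denominators $1/m$; good reduction and $\fO$-integrality of $\gamma_{\fb_i,\fa}$ do \emph{not} make the Taylor coefficients of $\log\gamma_{\fb_i,\fa}$ integral. Concretely, via the identity $\Dnotk(\gamma_{\fb,\fa})=-E_{0,k}(z;\cL_\fb,\fa)$ and $E_{0,k}=(-1)^k\wp^{(k-2)}$, one computes for instance
\[
\frac{E_{0,8}}{7!}=\wp^4-\tfrac{1}{5}g_2\wp^2-\tfrac{1}{7}g_3\wp+\tfrac{1}{560}g_2^2,
\]
which is not $p$-integral for $p=5$ or $p=7$. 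So the assertion that $\rational/(k-1)!$ is $\pi$-integral cannot be obtained from your local expansion; and since your upper-bound step (``$\rational/(k-1)!$ reduces to something with a pole, hence is nonzero mod $\pi$'') presupposes this integrality, the whole argument hinges on the unproved step. The formal-group justification you allude to does not repair this: the complex uniformizer $z$ is not an integral local parameter, and the Taylor coefficients of $\wp,\wp'$ in $z-z_a$ already carry $1/m!$ factors.

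The paper proceeds quite differently. Instead of a Laurent analysis, it factorises
\[
\rational(P)=g(P)\cdot\prod_{\delta,\,Q\in (E_\fa\setminus\{0\})/\pm1}\bigl(x(P\oplus V^\delta)-x(Q)\bigr)^{-k},
\]
and shows directly that $g$ lies in $\fO\bigl[x(\lambda(\fb_i)(P\oplus V^\delta)),\,y(\lambda(\fb_i)(P\oplus V^\delta))\bigr]$, using that each $x_{\fb_i}\circ\lambda(\fb_i)(P\oplus V^\delta)-x_{\fb_i}\circ\lambda(\fb_i)(Q)$ differs from $x(P\oplus V^\delta)-x(Q)$ by an $\fO$-unit. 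The product factors are $\fO^\times$-valued at almost all $q$-power torsion points, so $\ord_\pi(\rational)=\ord_\pi(g)$; then the value of $g$ at a pole $U\ominus V^\delta$ with $U\in E_\fa\setminus\{0\}$ is read off from the leading terms. Note also that the paper works at the poles coming from \emph{nonzero} $\fa$-torsion, where the leading coefficient is $(k-1)!\sum_i\chi_0(\fb_i)/\varphi(\fb_i)^k$ with no $(N\fa-1)$ factor; your choice of the pole at $P=-V$ forces the extra hypothesis $N\fa\not\equiv 1\pmod p$, which is absent from the lemma as stated and unnecessary with the paper's choice of pole.
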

\begin{proof}
Let us first recall the following facts proved in \cite[proof of Lemma~6.7]{Lam15}.
\begin{itemize}
 \item[(a)] The rational function $\Dnotk(\gamma_{\fb,\fa})$ on $E^{(\fb)}$ has poles of order $k$ at all the elements of $P\in E_\fa^{(\fb)}\setminus\{0\}$, with leading coefficient with respect to $z-z_P$ equal to $(k-1)!$.
 \item[(b)]Furthermore, $\Dnotk(\gamma_{\fb,\fa})$ has a pole of order $k$ at $P=0$, with leading coefficient with respect to $z$ equal to $N(\fa)-1$.
 \item[(c)]The poles described above are the only poles of $\Dnotk(\gamma_{\fb,\fa})$.
 \item[(d)]Let $x_\fb$ and $y_\fb$ be the functions sending a point $P\in E^{(\fb)}$ to its $x$- and $y$-coordinates given by the Weierstrass equation \eqref{eq:weierstrass}.
 The only zeros of the function $x_\fb(P)-x_\fb(R)$ are $P=R$ and $P=\ominus R$.
 If $x_\fb(R)\ne0$, these are simple zeros and the leading coefficient with respect to $z-z_P$ is given by $y_\fb(P)$.
\end{itemize}

Let $i\in I$.
Since $\fa$ is coprime to $\fb_i$, the isogeny $\lambda(\fb_i)$ induces an isomorphism $E_\fa\simeq E_\fa^{(\fb_i)}$.
Therefore, by (c), the poles of $\Dnotk(\gamma_{\fb_i,\fa})\circ \lambda(\fb_i)$ are precisely the elements in $E_\fa$.
Recall from Definition~\ref{defi: rational function main} that
\[
\rational(P) = \sum_{i\in I}\frac{\chi_0(\fb_i)\Lambda(\fb_i)^k}{\varphi(\fb_i)^k}{\sum_{\delta\in\Gal(\sR(\fg)/\sR(\ff))}}\Dnotk(\gamma_{\fb_i,\fa})\circ\lambda(\fb_i)( V^\delta\oplus P).
\]
In particular, the poles of $\rational(P)$ are given by $U\ominus V^\delta$, where $U\in E_\fa$ and $\delta\in \Gal(\sR(\fg)/{\sR(\ff)})$.

Let $P$ be a pole of $\Dnotk(\gamma_{\fb_i,\fa})\circ \lambda(\fb_i)$.
By \eqref{eq:xi-transform}, the leading coefficient of $\Dnotk(\gamma_{\fb_i,\fa})\circ \lambda(\fb_i)$ with respect to $z-z_P$ is that of $\Dnotk(\gamma_{1,\fa})$ multiplied by $\Lambda(\fb_i)^{-k}$, where $\gamma_{1,\fa}$ denotes the rational function on $E$ (so corresponding to the choice of $i$ gives $E^{(\fb_i)}=E$).
Consequently, by (a) the leading coefficient of $\rational$ with respect to $z-z_P$, when $P$ is the pole $U\ominus V^\sigma$ where $U\in E_\fa\setminus\{0\}$, is given by
\[
(k-1)!\sum_{i\in I}\frac{\chi_0(\fb_i)}{\varphi(\fb_i)^k},
\]
which has $\pi$-adic valuation equal to $\ord_\pi\left((k-1)!\right)$ by assumption \eqref{eq:condition-rho}.

Let $i\in I$, $\delta \in \Gal(\sR(\fg)/\sR(\ff))$ and $Q\in E_\fa \setminus\{0\}$.
By (d), the rational functions (on $E$) given by $x_{\fb_i}\circ \lambda(\fb_i)(P\oplus V^\delta)-x_{\fb_i}\circ \lambda(\fb_i)(Q)$ and $x(P\oplus V^\delta)-x(Q)$ (where $x$ denotes the $x$-coordinate function on $E$) have the same zeros.
Furthermore, by \eqref{eq:xi-transform}, the leading terms of these two rational functions differ by the constant $\Lambda(\fb_i)$.
Consequently, these two functions differ by a unit in $\fO$.
Therefore, as in \cite[proof of Lemma~6.7]{Lam15}, we can write
\[
\rational(P)=g(P)\prod_{\substack{\delta\in \Gal(\sR(\fg)/\sR(\ff)) \\ Q\in\left( E_\fa \setminus\{0\}\right)/ \pm1}}\left((x(P\oplus V^\delta)-x(Q)\right)^{-k},
\]
where $g$ is a rational function on $E$ belonging to
\[
\fO\Big[x\left(\lambda(\fb_i)(P\oplus V^\delta)\right), y\left(\lambda(\fb_i)(P\oplus V^\delta)\right) : i\in I, \delta\in \Gal(\sR(\fg)/\sR(\ff))\Big].
\]
In particular $\ord_\pi(g)\ge0$.

As has been established in \cite[proof of Lemma~6.7]{Lam15}, the functions $x(P\oplus V^\delta)-x(Q)$ take values in $\fO^\times$ for almost all $P$.
Furthermore, by comparing leading terms at $P=U\ominus V^\delta$, we deduce that $g$ takes values in $\fO^\times$ at these points.
Thus, $\ord_\pi(g)=0$, which concludes the proof.
\end{proof}

We can now prove Theorem~\ref{thmB}.
Let $\upsilon=\kappa\overline\chi_0$ as before.
By an argument similar to Lemma~\ref{lemma: factor has trivial pi-adic valuation} it suffices to prove the theorem for imprimitive values $L_{\fh}^{\alg}\left(\overline{\Psi\kappa}\right)$ because for almost all finite-order characters $\kappa$ of $\Gal(K_\infty/K)$, we have
\[
\ord_{\pi}\left( L^{\alg} \left({\overline{\Psi\kappa}}\right)\right) = \ord_{\pi}\left( L_{\fh}^{\alg}\left({\overline{\Psi\kappa}}\right)\right).
\]
Indeed, for any prime ideal $\mathfrak{r}$ of $K$ and for almost all characters $\kappa$,
\[
\ord_{\pi}\left(1 - \frac{{\overline{\Psi\kappa(\mathfrak{r})}}}{N(\mathfrak{r})^k} \right) =0
\]
as the $q$-power roots of unity modulo $\pi\fO$ are distinct since $p\ne q$.

Lemma~\ref{lemma: Lam15 Lemma 6.7} asserts that {$\ord_{\pi}\rational=\ord_\pi\left((k-1)!\right)$}.
In particular, the associated elliptic function measure $\alpha^*$ satisfies $\ord_\pi\alpha^*={\ord_\pi\left((k-1)!\right)}$.
Therefore, on combining Lemma~\ref{thm: Lam15 6.6} with Theorem~\ref{thm:valuation-gamma-transform}, we deduce that for a Zariski dense set of $\kappa$, we have
\[
\ord_{\pi}\left( \left( N(\fa) - {\varphi}(\fa)^k{\upsilon(\tau_\fa)}\right) L_\fh^{\alg}\left({\overline{\Psi\kappa}}\right)\right)=0.
\] 
The same argument as in Remark \ref{the Zariski dense set is open} shows that this Zariski dense set is also open.
Since the intersection of two open dense sets is open dense, there exists a dense set of characters $\kappa$ with
\[\ord_{\pi}\left( L^{\alg}\left(\overline{\Psi\kappa}\right)\right)=0.\]

\appendix
\section{appendix}
In this appendix we carry out a technical calculation required in the proof of Lemma~\ref{lemma: Lam14 3.1.4}.
For this calculation, we rely heavily on the work of de Shalit in \cite{dS87}.
In particular, we express special $L$-values in terms of logarithmic derivatives of rational functions.
We do so by relating both of these quantities to values of Eisenstein series.

\subsection{Relating rational functions to Eisenstein series}
As in the main text, let $K$ be an imaginary quadratic field and $H/K$ be the Hilbert class field of $K$.
Let $E_{/H}$ be a CM elliptic curve with CM by $\mathcal{O}_K$ and $\cL$ be the associated lattice.
Let $\fa$ and $\fb$ be ideals of $K$ such that $\fb$ is coprime to $6\ff$.
With respect to $\cL_\fb$, we can define an \emph{elliptic function}, denoted by $\Theta(z;\cL_\fb,\fa)$, as in \cite[Chapter~II, Section~2.3, (10) on p.~49]{dS87}.
Let $\xi_\fb$ be the isomorphism of complex Lie groups defined in \eqref{eqn: xi iso}.
It follows from \cite[(16) on p.~54]{dS87} that for any $z\in\CC$ with $P=\xi_\fb(z)\in E^{(\fb)}$,
\begin{equation}
 \Theta(z;\cL_\fb,\fa) =C_{\fb,\fa}\cdot \zeta_{\fb,\fa}(P)^{12},
\label{eq:Theta-zeta}
\end{equation}
where $\zeta_{\fb,\fa}(P)$ is the rational function introduced in \eqref{eqn: zeta rational function} and $C_{\fb,\fa}$ is some constant that is independent of $P$ and $z$ (the power of 12 appears because the product in \eqref{eqn: zeta rational function} is taken over $\fa$-torsions modulo $\pm1$, whereas the product in \cite[(16) on p.~54]{dS87} is taken over all non-trivial $\fa$-torsions, without modulo $\pm1$).

For integers $k\geq 1$ and $0\leq -j < k$, let $E_{j,k}(z,\cL_\fb)$ be the $(j,k)$-th \emph{Eisenstein series} associated to the lattice $\cL_\fb$ given as in \cite[(5) on p.~57]{dS87}.
Notice that when $k+j\ge3$, we have explicitly
\[
E_{j,k}(z,\cL_\fb) = (k-1)!A(\cL_\fb)^j{\sum_{w\in \cL_\fb}}'\frac{(\overline{z} + \overline{w})^{k-j}}{\abs{z+w}^{2k}}=(k-1)!A(\cL_\fb)^j{\sum_{w\in \cL_\fb}}'\frac{(\overline{z} + \overline{w})^k (z + w)^j}{\abs{z+w}^{2(k+j)}}.
\]
Here, the sum runs over all $w\in \cL_\fb$ except possibly $w=-z$ if $z\in \cL_\fb$.
Further, for each integral ideal $\fa$, we can define (see \cite[(5) on p.~57]{dS87})
\[
E_{j,k}(z;\cL_\fb,\fa) = (N\fa) E_{j,k}(z,\cL_\fb) - E_{j,k}(z, \fa^{-1}\cL_\fb).
\]

From \eqref{eq:Theta-zeta}, we deduce that, for $k\geq 1$, 
\begin{equation}
\label{Dk and Ez}
\begin{split}
12\Djk(\gamma_{\fb,\fa})(P) &= \cD_\fb^{-j}\partial^{k+j} \log \Theta(z;\cL_\fb,\fa)\\
&= -12 E_{j,k}(z,\cL_\fb, \fa)\quad \textrm{ by \cite[Chapter~II, Section~3.1, (7) on p.~58]{dS87}.}
\end{split}
\end{equation}

\subsection{Relating Eisenstein series to rational \texorpdfstring{$L$}{}-values}
Recall that $\ff$ is an ideal of $\cO_K$ that is divisible by the conductor of the Hecke character $\varphi$.
Let $\fm$ be a principal ideal of $\cO_K$ such that $\ff\mid \fm$.
Let $\fc$ be another ideal which is coprime to $\fm$.
Then for any $\Omega\in \CC^\times$ \cite[Chapter~II, Proposition~3.5, p.~62]{dS87} asserts that
\begin{equation}
\label{3.5 ds}
(N\fm^{-j})E_{j,k}\left(\Omega, \fc^{-1}\fm\Omega\right) = (k-1)! \left( \frac{\sqrt{d_K}}{2\pi}\right)^{j}\Omega^{j-k}\varphi(\fc)^{k-j} L_{\fm}(\overline{\varphi^{k-j}}, k, (\fc, \sR(\fm))).
\end{equation}
Let $\alpha\in\cO_K$ be a generator of our chosen principal ideal $\fm$.
We choose $\Omega\in\CC^\times$ in \eqref{3.5 ds} to be the period $\Omega_\infty$ so that
\[
\cL=\Omega_\infty \mathcal{O}_K.
\]
Let $\rho$ be the primitive $\fm$-division point on $\CC/\cL$ given by $\rho = \frac{\Omega_\infty}{\alpha}$.
Then,
\begin{align*}
E_{j,k}\left( \Omega_\infty, \fc^{-1}\fm\Omega_\infty \right) &= E_{j,k}\left( \rho\alpha, \fc^{-1}\fm\Omega_\infty \right)\\ 
&= E_{j,k}\left( \rho\alpha, \fc^{-1}\fm\cL \right)\\
&= \alpha^{j-k}E_{j,k}\left( \rho, \fc^{-1}\cL \right) \quad \textrm{by \cite[Proposition~3.3(i), p.~58]{dS87}}\\ 
&= \alpha^{j-k}\Lambda(\fc)^{k-j} E_{j,k}\left( \rho, \cL \right)^{(\fc, \sR(\fm))} \quad \textrm{by \cite[Proposition~3.3(iii), p.~58]{dS87}}, 
\end{align*}
where $\Lambda(\fc)\in H^\times$ is defined as in \eqref{eq:defnLambda}.

Combined with \eqref{3.5 ds}, the above calculation shows that
\begin{align}
\notag(k-1)! L_{\fm}\left(\overline{\varphi^{k-j}}, k, (\fc,\sR(\fm))\right) 
&= \left(\frac{\Lambda(\fc) \Omega_\infty}{\alpha \varphi(\fc)} \right)^{k-j}\left(\frac{2\pi}{N\fm\sqrt{d_K}}\right)^j E_{j,k}\left( \rho, \cL \right)^{(\fc,\sR(\fm))}\\
&= \left(\frac{\Lambda(\fc) \rho}{\varphi(\fc)}\right)^{k-j} \left(\frac{2\pi}{N\fm\sqrt{d_K}}\right)^j E_{j,k}\left( \Lambda(\fc)\rho, \cL_{\fc} \right) \quad \textrm{by \cite[(8), p.~58]{dS87}}
\label{eq:L-Eisen}.
\end{align}

\begin{rem}
In the special case when $H=K$ and $E$ is defined over $K$, (i.e., $K$ has class number 1) we know from \cite[(18) on p.~42]{dS87} that $\Lambda(\fc) = \varphi(\fc)$.
Moreover, it is also clear in this case that $\psi = \varphi$.
Therefore, on taking $j=0$, we obtain
\[
L_{\fm}\left(\overline{\psi^k}, k, (\fc,\sR(\fm))\right) = \frac{\rho^k}{(k-1)!} E_k\left( \psi(\fc)\rho, \cL_\fc \right)
\]
(c.f. \cite[Theorem~6.2]{Lam15}).
\end{rem}

\subsection{Relating rational functions to \texorpdfstring{$L$}{}-values}
\label{appendix final calculation}
Our final step is to combine the calculations in the previous two sections to relate the image of the operator $\Djk$ applied to our chosen rational function to the $(j,k)$-th Eisenstein series.
Let $P$ be an $\fm$-torsion on $E$.
We know from \eqref{Dk and Ez} that
\begin{align*}
\Djk(\gamma_{\fc,\fa})(P) &= - E_{j,k}(z;\cL_\fc,\fa)\\
&= -\left( (N\fa)E_{j,k}(z,\cL_\fc) - E_{j,k}(z,\fa^{-1}\cL_\fc)\right)\\
&= -\left( (N\fa)E_{j,k}(z,\cL_\fc) - \Lambda(\fa)^{k-j}E_{j,k}(z,\cL_\fc)^{(\fa,\sR(\fm))}\right) \ \textrm{by \cite[Prop.~3.3(iii), p.~58]{dS87}}\\
&= -\left( (N\fa)- \Lambda(\fa)^{k-j}(\fa,\sR(\fm))\right)E_{j,k}(z,\cL_\fc).
\end{align*}
Now, choosing $P=\xi_\fc\left(\Lambda(\fc)\rho\right)$,
% = \xi_\fc\left(\rho^{(\fc, \sR(\fm)/H)}\right)$; note that the last equality follows from \cite[Corollary 5.16(ii)]{Rub_AWS}).
we deduce that
\begin{align}
 \Djk(\gamma_{\fc,\fa})(P) = -(k-1)! \left( (N\fa)- \Lambda(\fa)^{k-j}(\fa,\sR(\fm))\right)& \left(\frac{\varphi(\fc)}{\rho \Lambda(\fc)}\right)^{k-j}\times\notag\\
 &\left(\frac{N\fm\sqrt{d_K}}{2\pi}\right)^j L_{\fm}\left(\overline{ \varphi^{k-j}}, k, (\fc,\sR(\fm))\right),\label{eq:app}
\end{align}
which is the formula that is utilized in the proof of Lemma~\ref{lemma: Lam14 3.1.4}.

\bibliographystyle{amsalpha}
\bibliography{references}
\end{document}